\newcommand{\pattern}[4]{										
	\raisebox{0.6ex}{
		\begin{tikzpicture}[scale=0.35, baseline=(current bounding box.center), #1]
		\foreach \x/\y in {#4}		\fill[gray!20] (\x,\y) rectangle +(1,1);
		\draw (0.01,0.01) grid (#2+0.99,#2+0.99);
		\foreach \x/\y in {#3}		\filldraw (\x,\y) circle (6pt);
		\end{tikzpicture}}
}
\begin{document}
\newtheorem{thm}{Theorem}
\newtheorem{lemma}[thm]{Lemma}
\newtheorem{prop}[thm]{Proposition}
\newtheorem{cor}[thm]{Corollary}
\newtheorem{con}{Conjecture}
\newtheorem{rem}[thm]{Remark}
\newtheorem{quest}{Question}
\newtheorem{prob}{Problem}

\author{Sergey Avgustinovich\footnote{Sobolev Institute of Mathematics, Prospekt Akademika Koptyuga 4, Novosibirsk, 630090, Russia. 
{\bf Emails:} \{avgust,\ taa\}@math.nsc.ru.}\ , Sergey Kitaev\footnote{Department of Mathematics and Statistics, University of Strathclyde, 26 Richmond Street, Glasgow G1 1XH, United Kingdom. 
{\bf Email:} sergey.kitaev@strath.ac.uk.}\, \\ and Anna Taranenko\footnotemark[1]}
\title{On five types of crucial permutations with respect to monotone patterns}

\maketitle

\begin{abstract} A crucial permutation is a permutation that avoids a given set of prohibitions, but any of its extensions, in an allowable way, results in a prohibition being introduced. 

In this paper, we introduce five natural types of crucial permutations with respect to monotone patterns, notably quadrocrucial permutations that are linked most closely to Erd\H{o}s-Szekeres extremal permutations.  The way we define right-crucial and bicrucial permutations is consistent with the definition of respective permutations studied in the literature in the contexts of other prohibitions. For each of the five types, we provide its characterization in terms of Young tableaux via the RSK correspondence. Moreover, we use the characterizations to prove that the number of such permutations of length $n$ is growing when $n\to\infty$, and to enumerate minimal crucial permutations in all but one case. We also provide other enumerative results. \\

\noindent
{\bf Keywords:}  crucial permutation, right-crucial permutation, top-right-crucial permutation, bicrucial permutation, tricrucial permutation, quadrocrucial permutation, RSK correspondence, Erd\H{o}s-Szekeres extremal permutation \\

\noindent {\bf 2010 Mathematics Subject Classification:} 05A05, 05A15

\end{abstract}

\section{Introduction}

The celebrated Erd\H{o}s-Szekeres theorem \cite{ES1935} asserts that any permutation of length $(k-1)(\ell-1)+1$ contains an increasing subsequence of length $k$ or a decreasing subsequence of length $\ell$. It follows from the Robinson-Schensted-Knuth (RSK) correspondence and the hook-length formula \cite{Stanley} that the number of permutations of length $(k-1)(\ell-1)$ without increasing subsequences of length $k$ and decreasing subsequences of length $\ell$, $k\geq \ell$, is given by
\begin{equation}\label{Stanley-formula}
\left(\frac{((k-1)(\ell-1))!}{1^12^2\cdots (\ell-1)^{\ell-1}\ell^{\ell-1}\cdots (k-1)^{\ell-1}\ell^{\ell-2}\cdots (k+\ell-3)^1}\right)^2.
\end{equation}
This formula, in the case of $k=\ell$, has been presented in \cite{Stanley1} from 1969, and the respective permutations are called {\em Erd\H{o}s-Szekeres extremal permutations} in \cite{Vatter}. 

So, extending any permutation of length $(k-1)(\ell-1)$ that avoid increasing and decreasing subsequences of lengths $k$ and $\ell$, respectively, by an extra element to the left, or to the right, results in an occurrence of a monotone subsequence in question. However, depending on $k$ and $\ell$, there may be permutations of smaller lengths that have the same properties, namely, that avoid the monotone subsequences in question but that do not admit extensions to the right and/or to the left without introducing an unwanted monotone subsequence. Such permutations, being of significant combinatorial interest, have not received any attention in the literature, and the main goal of this paper is to fill in this gap. 

In fact, our studies in this paper belong to a much wider context. The idea of a {\em crucial object} appearing, for example, in Combinatorics on Words and Graph Theory, can be described as follows. Given a set of prohibitions $P$, we consider the set $S$ of all objects that avoid $P$, namely, the set of all objects that do not contain elements of $P$ as subobjects.  Then, a crucial object $s\in S$ is an object that cannot be extended, in a specified way, to a larger object in $S$. In other words, any attempt to enlarge $s$ in the specified way will necessarily introduce a prohibited subobject in $P$.  

In the context of our paper, the objects are permutations and the set of prohibitions is increasing subsequences of length $k$ and decreasing subsequences of length $\ell$, while extensions of permutations by an element will be defined in five ways (see Section~\ref{prelim-sec} for formal definitions): (i) just to the right (giving {\em crucial permutations}, the same as {\em right-crucial permutations}), which is equivalent to extending just to the left, or by inserting the largest element, or by inserting the smallest element; (ii) to the right and to the left (giving {\em bicrucial permutations}); (iii) to the right and by inserting the largest element (giving {\em top-right-crucial permutations}); (iv) to the right, to the left and by inserting the largest element (giving {\em tricrucial permutations}); and (v) to the right, to the left and by inserting the largest or smallest elements (giving {\em quadrocrucial permutations}). The most relevant to our paper are the studies in \cite{AKPV11,AKV12,GKKLN15,GJ22} on (bi)crucial permutations with respect to squares and arithmetic monotone patterns, although we do not use any of the results from these papers as our prohibitions are of different nature. 

Examples of natural questions to ask about crucial objects with respect to a given set of prohibitions are: Do crucial objects exist? If so, then do crucial objects exist of any size above certain size or crucial objects of maximal size exist? What is the size of a minimal crucial object?  What is the structure of crucial objects? How many crucial objects of a given size are there?  And so on. 

\begin{table}
\begin{center}
\begin{tabular}{c|c|c|c|c}
perm.\ type & character. & min length & enum.\ of min & enum. $\ell=3$ \\
\hline
\hline
crucial & Thm~\ref{cruchar} & $k+\ell-3$ & Cor.~\ref{min-crucial-count} & Cor.~\ref{k3-crucial-count} \\
\hline
top-right-cruc. & Thm~\ref{top-right-cruchar}  &  $k+\ell-3$ & Thm~\ref{thm-min-top-right-crucial} & Cor.~\ref{k3-top-right-crucial-count} \\
\hline
bicrucial & Thm~\ref{bicruchar}  & $k+2\ell-5$ & Cor.~\ref{min-bicrucial-count-cor} & only min, Cor.~\ref{min-k3-bicrucial-count-cor}\\
\hline
tricrucial & Thm~\ref{tricruchar}  & $k+2\ell-5$  & ? 
& only min, Cor.~\ref{min-k3-tricrucial-count-cor} \\
\hline
quadrocrucial & Thm~\ref{quadrocruchar}  & $k+2\ell-5$  & Thm~\ref{min-kl-quadro} & only min, Thm~\ref{min-kl-quadro}\\
\end{tabular}
\caption{A summary of our key results in the paper}\label{key-results-tab}
\end{center}
\end{table}

This paper is organized as follows. In Sections~\ref{prelim-sec} and~\ref{miscleneous-sec}  we give all necessary definitions along with a number of preliminary results and briefly review the RSK correspondence to be used in Sections~\ref{RSK-characterization-crucial}--
\ref{RSK-characterization-quadrocrucial} to characterize crucial, bicrucial, top-right-crucial, tricrucial and quadrocrucial permutations, respectively, and to provide a number of enumeration results based on the characterizations. Table~\ref{key-results-tab} summaries our main results in this paper. In Section~\ref{open-sec} we discuss directions of further research.

\section{Preliminaries}\label{prelim-sec}

A {\em pattern} is a permutation of $\{1,\ldots,k\}$ in one-line notation. An occurrence of a pattern $p=p_1\cdots p_k$ in a permutation $\sigma=\sigma_1\cdots\sigma_n$ is a subsequence $\sigma_{i_1}\cdots\sigma_{i_k}$, where $1\leq i_1<\cdots< i_k\leq n$, such that $\sigma_{i_j}<\sigma_{i_m}$ if and only if $p_j<p_m$. For example, the permutation $23514$ has two occurrences of the pattern 132, namely, the subsequences 254 and 354, while this permutation {\em avoids} (that is, has no occurrences of) the pattern 321. A pattern $p$ is {\em increasing} (resp., {\em decreasing}) if $p_1<p_2<\cdots<p_k$ (resp., $p_1>p_2>\cdots>p_k$) and we denote it $i_k$ (resp., $d_k$). Increasing and decreasing patterns are {\em monotone patterns}. Permutation patterns are an active area of research (see, for example, \cite{Kit5} and references therein). We let $S_n$ denote the set of all permutations of length $n$.

Let $\sigma = \sigma_1\cdots \sigma_n\in S_n$. Then the {\em $i$-th extension of $\sigma$ to the right} (resp., {\em left}), $1\leq i\leq n$, is the permutation  $e_i(\sigma_1)\cdots e_i(\sigma_n)i$ (resp., $ie_i(\sigma_1)\cdots e_i(\sigma_n)$), 
where 
$$e_i(x)=
\begin{cases}
x & \mbox{if }x<i, \\
x+1 & \mbox{if }x\geq i.
\end{cases}
$$
The {\em $(n+1)$-st extension} of $\sigma$ to the right (resp., left) is the permutation $\sigma(n+1)$ (resp., $(n+1)\sigma$). The {\em $i$-th extension of $\sigma$ from below} (resp., {\em above}), $1\leq i\leq n+1$, is the permutation $(\sigma_1+1)\cdots(\sigma_{i-1}+1)1(\sigma_i+1)\cdots(\sigma_n+1)$ (resp., $\sigma_1\cdots\sigma_{i-1}(n+1)\sigma_i\cdots\sigma_n$). For a sequence $\pi$ of distinct numbers, the {\em reduced form} of $\pi$, denoted red($\pi$), is the sequence obtained by replacing the $i$-th smallest element by $i$. For example, red(3826)=2413. Finally, the {\em reverse} of $\sigma$ is the permutation $r(\sigma)=\sigma_n\sigma_{n-1}\cdots\sigma_1$ and the {\em complement} of $\sigma$ is the permutation $c(\sigma) = (n+1 - \sigma_1) \cdots (n+1 - \sigma_n)$. For example, $r(2413)=3142$ and $c(25413)=41253$.

\subsection{Five types of crucial permutations}\label{5-types-of-crucial-perms-sec}

Each permutation can be viewed as a two-dimensional permutation diagram where the height of the points corresponds to the value of the respective elements.  For example, the permutation 526413 (avoiding the patterns 123 and 4321) corresponds to the permutation diagram
\[
\pattern{scale=0.8}{6}{1/5,2/2,3/6,4/4,5/1,6/3}{}
\]
Following a natural approach to define crucial permutations when looking at permutation diagrams, we see that, up to diagram rotation, there are five ways to specify allowed extensions of permutations to larger permutations:
\begin{itemize}
\item extending just to the right (crucial permutations in the literature correspond to extending to the right);
\item extending to the right and to the left (corresponding to bicrucial permutations in the literature);
\item extending to the right and from above; 
\item extending to the right, to the left and from above (corresponding to tricrucial permutations introduced in this paper);
\item extending to the right, to the left, from above and from below  (corresponding to quadrocrucial permutations introduced in this paper).
\end{itemize}

Next, we give formal definitions. A permutation is $(k,\ell)$-{\em right} (resp., {\em left, top, bottom}){\em-crucial} if it avoids the patterns $i_k$ and $d_{\ell}$ but any of its extensions to the right (resp., to the left, from above, from below) results in a permutation containing an occurrence of $i_k$ or $d_{\ell}$. Sometimes, we omit the word  ``right'' in a ``$(k,\ell)$-right-crucial permutation'' because ``right-crucial'' objects in the literature \cite{AKPV11,AKV12,GKKLN15} are called ``crucial''.  A permutation is $(k,\ell)$-\textit{bicrucial} if it is $(k,\ell)$-crucial and its reverse is $(\ell,k)$-crucial (equivalently, if it is $(k, \ell)$-right-crucial and $(k, \ell)$-left-crucial).  A permutation is $(k,\ell)$-\textit{top-right-crucial} if it is $(k,\ell)$-crucial and its any extension from above results in a permutation containing an occurrence of $i_k$ or $d_{\ell}$. A $(k,\ell)$-bicrucial permutation is {\em $(k,\ell)$-tricrucial} if its any extension from above results in a permutation containing an occurrence of $i_k$ or $d_{\ell}$. A $(k,\ell)$-tricrucial permutation is {\em $(k,\ell)$-quadrocrucial} if its any extension from below results in a permutation containing an occurrence of $i_k$ or $d_{\ell}$. 

Let $s^{(c)}_n(k,\ell)$ (resp., $s^{(b)}_n(k,\ell)$, $s^{(tr)}_n(k,\ell)$, $s^{(tri)}_n(k,\ell)$, $s^{(q)}_n(k,\ell)$) be the number of $(k,\ell)$-right-crucial (resp., $(k,\ell)$-bicrucial, $(k,\ell)$-top-right-crucial, $(k,\ell)$-tricrucial, $(k,\ell)$-quadrocrucial) permutations of length $n$.

For any type of permutations introduced above, a permutation of that type is {\em minimal} (resp., {\em maximal}) if there are no other permutations of the same type of smaller (resp., larger) length. Also, for any type of permutations introduced above, a permutation of that type is {\em irreducible} if there is no way to delete an element in the permutation that would result, in the reduced form, in a permutation of the same type. Finally, for any type of permutations introduced above, a permutation of that type is {\em non-extendable} if there is no way to insert a new element in the permutation (keeping the order of the other elements the same) that would result in a permutation of the same type.
%

It follows from the Erd\H{o}s-Szekeres theorem that maximal $(k,\ell)$-crucial ($(k,\ell)$-bicrucial, $(k,\ell)$-top-right-crucial, $(k,\ell)$-tricrucial, $(k,\ell)$-quadrocrucial) permutations are of length $(k-1)(\ell-1)$. However, the study of minimal lengths, and indeed, $(k,\ell)$-(bi,top-right-,tri,quadro)crucial permutations of lengths between minimal and maximal, is an interesting and challenging research direction.

\subsection{The RSK correspondence}

The {\em Robinson-Schensted-Knuth correspondence}, also referred to as the {\em RSK correspondence} or the {\em RSK algorithm}, is a bijection between the set of all permutations of length $n$ and pairs $(P,Q)$ of {\em standard Young tableaux} of the same shape. Even though the RSK correspondence is a well known object in algebraic combinatorics, we briefly review it here as several of our key results in this paper rely on it.

Let $\lambda=(\lambda_1,\lambda_2,\ldots)$ be a partition of $n\geq 0$, denoted $\lambda \vdash n$. Hence, $\lambda_1\geq\lambda_2\geq\cdots\geq0$ and $\sum\lambda_i=n$. The {\em Young diagram} of a partition $\lambda$ is a left-justified array of squares with $\lambda_i$ squares in the $i$-th row.  For example, the Young diagram of (4,3,1) is given by 

\begin{center}
\begin{tikzpicture}[scale=0.4]

\draw [line width=0.5](0,3)--(4,3);
\draw [line width=0.5](0,2)--(4,2);
\draw [line width=0.5](0,1)--(3,1);
\draw [line width=0.5](0,0)--(1,0);

\draw [line width=0.5](0,0)--(0,3);
\draw [line width=0.5](1,0)--(1,3);
\draw [line width=0.5](2,1)--(2,3);
\draw [line width=0.5](3,1)--(3,3);
\draw [line width=0.5](4,2)--(4,3);

\end{tikzpicture}
\end{center}

A standard Young tableau (SYT) of shape $\lambda \vdash n$ is obtained by placing the integers $1,\ldots,n$ (each appearing once) into the squares of the diagram of $\lambda$ (with one integer in each square) such that every row and column is increasing. For example, an SYT of shape (4,2,1) is given by 

\begin{center}
\begin{tikzpicture}[scale=0.4]

\draw [line width=0.5](0,3)--(4,3);
\draw [line width=0.5](0,2)--(4,2);
\draw [line width=0.5](0,1)--(3,1);
\draw [line width=0.5](0,0)--(1,0);

\draw [line width=0.5](0,0)--(0,3);
\draw [line width=0.5](1,0)--(1,3);
\draw [line width=0.5](2,1)--(2,3);
\draw [line width=0.5](3,1)--(3,3);
\draw [line width=0.5](4,2)--(4,3);

\node [below] at (0.5,3.2){{\small 1}};
\node [below] at (1.5,3.2){{\small 3}};
\node [below] at (2.5,3.2){{\small 4}};
\node [below] at (3.5,3.2){{\small 7}};

\node [below] at (0.5,2.2){{\small 2}};
\node [below] at (1.5,2.2){{\small 6}};
\node [below] at (2.5,2.2){{\small 8}};

\node [below] at (0.5,1.2){{\small 5}};

\end{tikzpicture}
\end{center}

The basic operation of the RSK correspondence is {\em row insertion}, that is, inserting an integer $i$ into a tableau $T$ with distinct entries and with increasing rows and columns. Hence, $T$ satisfies the conditions of an SYT except that its entries can be any distinct integers, not necessarily $1,\ldots,n$. The process of row inserting $i$ into $T$ produces another tableau, denoted $T\leftarrow i$, with increasing rows and columns. If $S$ is the set of entries of $T$, then $S\cup\{i\}$ is the set of entries of $T \leftarrow i$. Now, $T \leftarrow i$ is defined recursively as follows. 
\begin{itemize}
\item If the first row of $T$ is empty or the largest entry of the first row of $T$ is less than $i$, then insert $i$ at the end of the first row.
\item Otherwise, $i$ replaces (or {\em bumps}) the smallest element $j>i$ in the first row of $T$. We then insert $j$ into the second row of $T$ by the same procedure.
\end{itemize}

Let $\sigma=\sigma_1\cdots\sigma_n\in S_n$, and let $\emptyset$ denote the empty tableau. Define
$$P_i=P_i(\sigma)=(\cdots((\emptyset \leftarrow \sigma_1)\leftarrow \sigma_2)\leftarrow\cdots\leftarrow \sigma_i$$
and set $P:=P(\sigma)=P_n(\sigma)$. $P$ is called the {\em insertion tableau}. Define $Q_0=\emptyset$, and once $Q_{i-1}$ is defined let $Q_i=Q_i(\sigma)$ be obtained from $Q_{i-1}$ by inserting $i$ (without changing the position of any of the entries of $Q_{i-1}$) so that $Q_i$ and $P_i$ have the same shape. Set $Q:=Q(\sigma)=Q_n(\sigma)$ called the {\em recording tableau}, and finally define the output of the RSK correspondence applied to $\sigma$ to be the pair $(P,Q)$. We refer to \cite{Stanley} for an example of the RSK correspondence applied to a permutation of length 5.

\begin{rem}\label{longest-remark} \normalfont It is known that the number of columns in $P$ (and $Q$) is the length of a longest increasing subsequence in $\sigma$, and the number of rows in the tableaux is the length of a longest decreasing subsequence. \end{rem}

Below, we let $p_{i,j}$ and $q_{i,j}$ denote the entry in row $i$ (from above) and column $j$ (from left) in $P$ and $Q$, respectively. In what follows, we will also need the {\em evacuation map}  introduced in~\cite{Schutz}. Here we provide one of its equivalent definitions. 

First, we introduce jeu-de-taquin slides of a tableau shape. Let $a < b$ and $\bullet$ represents an empty square. Then jeu-de-taquin slides are

\begin{center}
\begin{tabular}{ccc}
\begin{tikzpicture}[scale=0.4]

\draw [line width=0.5](1,2)--(2,2);
\draw [line width=0.5](0,1)--(2,1);
\draw [line width=0.5](0,0)--(2,0);

\draw [line width=0.5](0,0)--(0,1);
\draw [line width=0.5](1,0)--(1,2);
\draw [line width=0.5](2,0)--(2,2);

\node [below] at (0.5,2.2){{\small $\bullet$}};
\node [below] at (1.5,2.1){{\small $a$}};

\node [below] at (0.5,1.2){{\small $b$}};
\node [below] at (1.5,1.1){{\small $c$}};

\node [below] at (3,1.5){$\rightarrow$};

\draw [line width=0.5](4,2)--(5,2);
\draw [line width=0.5](4,1)--(6,1);
\draw [line width=0.5](4,0)--(6,0);

\draw [line width=0.5](4,0)--(4,2);
\draw [line width=0.5](5,0)--(5,2);
\draw [line width=0.5](6,0)--(6,1);

\node [below] at (4.5,2.1){{\small $a$}};
\node [below] at (5.5,2.2){{\small $\bullet$}};

\node [below] at (4.5,1.2){{\small $b$}};
\node [below] at (5.5,1.1){{\small $c$}};

\end{tikzpicture}

& \ \ \ &

\begin{tikzpicture}[scale=0.4]

\draw [line width=0.5](1,2)--(2,2);
\draw [line width=0.5](0,1)--(2,1);
\draw [line width=0.5](0,0)--(2,0);

\draw [line width=0.5](0,0)--(0,1);
\draw [line width=0.5](1,0)--(1,2);
\draw [line width=0.5](2,0)--(2,2);

\node [below] at (0.5,2.2){{\small $\bullet$}};
\node [below] at (1.5,2.2){{\small $b$}};

\node [below] at (0.5,1.1){{\small $a$}};
\node [below] at (1.5,1.1){{\small $c$}};

\node [below] at (3,1.5){$\rightarrow$};

\draw [line width=0.5](4,2)--(6,2);
\draw [line width=0.5](4,1)--(6,1);
\draw [line width=0.5](5,0)--(6,0);

\draw [line width=0.5](4,1)--(4,2);
\draw [line width=0.5](5,0)--(5,2);
\draw [line width=0.5](6,0)--(6,2);

\node [below] at (4.5,2.1){{\small $a$}};
\node [below] at (5.5,2.2){{\small $b$}};

\node [below] at (4.5,1.2){{\small $\bullet$}};
\node [below] at (5.5,1.1){{\small $c$}};
\end{tikzpicture} 
\end{tabular}
\end{center}
Here, the squares $b$ and $c$ may or may not be present.

Let $T$ be a SYT with $n$ squares. Place $T$ in a tight rectangle and replace every entry $j$ of $T$ by $n +1 -j$. Then rotate the rectangle $180^{\circ}$ and perform jeu-de-taquin slides on the result until a standard Young tableau is obtained. The constructed  SYT is the \textit{evacuation tableau} $\epsilon (T)$.

\section{Miscellaneous results}\label{miscleneous-sec}

\begin{lemma}\label{lemma-1}
Let $\sigma=\sigma_1\cdots\sigma_n$ be a $(k,\ell)$-crucial permutation. Then, $\sigma$ has an occurrence of $i_{k-1}$ and an occurrence of $d_{\ell-1}$ both ending at $\sigma_n$ (and having no other elements in common). 
\end{lemma}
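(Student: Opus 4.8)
\emph{Proof plan.} The plan is to read off the two required monotone subsequences from two specific extensions of $\sigma$ to the right. We may assume $k,\ell\ge 2$, so that $n\ge 1$ and the statement is non-trivial. The first extension to use is the $(\sigma_n+1)$-st extension of $\sigma$ to the right, say $\tau=\tau_1\cdots\tau_{n+1}$: its new last entry is $\tau_{n+1}=\sigma_n+1$, the prefix $\tau_1\cdots\tau_n$ is order-isomorphic to $\sigma$, the entries of value $\le\sigma_n$ are unchanged from $\sigma$, and those of value $>\sigma_n$ are the old values shifted up by one. Since $\sigma$ avoids $i_k$ and $d_\ell$, cruciality forces an occurrence of $i_k$ or $d_\ell$ in $\tau$ using position $n+1$. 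I would first rule out $d_\ell$: such an occurrence would use $\ell-1$ entries of $\tau$, all of value $\ge\sigma_n+2$ and at positions $<n$; undoing the shift turns them into $\ell-1$ entries of $\sigma$ of value $>\sigma_n$ at positions $<n$, and appending $\sigma_n$ produces $d_\ell$ in $\sigma$ --- impossible. Hence the forced occurrence is $i_k$, given by positions $i_1<\cdots<i_{k-1}\le n$ with $\sigma_{i_1}=\tau_{i_1}<\cdots<\tau_{i_{k-1}}=\sigma_{i_{k-1}}\le\sigma_n$ (the last inequality because $\tau_{i_{k-1}}<\tau_{n+1}=\sigma_n+1$). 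If $i_{k-1}<n$ then $\sigma_{i_{k-1}}<\sigma_n$ and $\sigma_{i_1}<\cdots<\sigma_{i_{k-1}}<\sigma_n$ would be an occurrence of $i_k$ in $\sigma$; so $i_{k-1}=n$, and $\sigma_{i_1}<\cdots<\sigma_{i_{k-2}}<\sigma_n$ is an occurrence of $i_{k-1}$ in $\sigma$ ending at $\sigma_n$ whose non-terminal entries all have value $<\sigma_n$.

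The second step is the mirror image of the first. Either rerun the argument verbatim on the $\sigma_n$-th extension of $\sigma$ to the right (whose new last entry has value $\sigma_n$, with the old $\sigma_n$ pushed up to $\sigma_n+1$), now excluding an $i_k$ --- since it would give $i_k$ in $\sigma$ ending at $\sigma_n$ --- and extracting a $d_\ell$; or, more economically, apply the conclusion of the first step to the complement $c(\sigma)$, which is $(\ell,k)$-right-crucial because complementation exchanges increasing and decreasing subsequences and sends the $i$-th right-extension of $\sigma$ to the $(n+2-i)$-st right-extension of $c(\sigma)$. Either way, the outcome is an occurrence of $d_{\ell-1}$ in $\sigma$ ending at $\sigma_n$ whose non-terminal entries all have value $>\sigma_n$.

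It remains only to combine the two occurrences. Their terminal entry is $\sigma_n$ in both cases; every other entry of the $i_{k-1}$ occurrence has value $<\sigma_n$ while every other entry of the $d_{\ell-1}$ occurrence has value $>\sigma_n$, so the two occurrences share no element besides $\sigma_n$, which is exactly the assertion. The only delicate point throughout is the bookkeeping for the relabelling map $e_i$: one must notice that the original position $n$ can participate in either monotone subsequence only through its relabelled value and, being the largest index, only as the final term, and one must keep track of which of $i_k$ and $d_\ell$ gets excluded in each of the two extensions.
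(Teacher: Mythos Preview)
Your proof is correct and follows essentially the same approach as the paper: both use the $(\sigma_n+1)$-st and $\sigma_n$-th right extensions, rule out one of the two monotone patterns in each case, and conclude that the forced pattern must pass through position $n$. Your write-up is slightly more explicit than the paper's in tracking that the non-terminal entries of the $i_{k-1}$ lie strictly below $\sigma_n$ and those of the $d_{\ell-1}$ strictly above, which makes the ``no other common element'' claim immediate; the optional complementation shortcut for the second half is a nice touch but not a genuinely different route.
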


\begin{proof}
Consider extending $\sigma$ by the element $\sigma_{n+1}:=\sigma_{n}+1$, which cannot introduce an occurrence of the pattern $d_{\ell}$ as otherwise, such an occurrence is given by $\sigma_{j_1}>\cdots>\sigma_{j_{\ell-1}}>\sigma_{n+1}$ for some $j_1<\cdots<j_{\ell-1}<n$ and $\sigma_{j_1}>\cdots>\sigma_{j_{\ell-1}}>\sigma_{n}$ would have been an occurrence of $d_{\ell}$ in $\sigma$ (since $\sigma_{n+1}-\sigma_n=1$), which is impossible. Hence, such an extension must introduce an occurrence of the pattern $i_k$ as $\sigma$ is $(k,\ell)$-crucial. Since $\sigma_{n+1}-\sigma_n=1$, such an occurrence must involve $\sigma_{n}$ or else there would be an occurrence of $i_k$ in $\sigma$ ending with $\sigma_n$. That shows that $\sigma$ has an occurrence of $i_{k-1}$  ending with $\sigma_n$. 

On the other hand, consider extending $\sigma$ by the element $\sigma_{n+1}:=\sigma_n$, which cannot introduce an occurrence of the pattern $i_k$ as otherwise, such an occurrence is given by $\sigma_{j_1}<\cdots<\sigma_{j_{k-1}}<\sigma_{n+1}$ for some $j_1<\cdots<j_{k-1}<n$ and $\sigma_{j_1}<\cdots<\sigma_{j_{k-1}}<\sigma_{n}$ would have been an occurrence of $i_k$ in $\sigma$ (since $\sigma_{n+1}$ is one less than the element next to it in the obtained permutation of length $n+1$), which is impossible. Hence, such an extension must introduce an occurrence of the pattern $d_{\ell}$ as $\sigma$ is $(k,\ell)$-crucial. Since $\sigma_{n}-\sigma_{n+1}=1$, such an occurrence must involve $\sigma_{n}$ or else there would be an occurrence of $d_{\ell}$ in $\sigma$ ending with $\sigma_n$. That shows that $\sigma$ has an occurrence of $d_{\ell-1}$  ending with $\sigma_n$. 

Clearly, the only common element in the occurrences of the patterns $i_{k-1}$ and $d_{\ell-1}$ in question can be the final element $\sigma_n$.
\end{proof}

The proof of the next lemma is analogous to that of Lemma~\ref{lemma-1} and hence is omitted. 

\begin{lemma}\label{lemma-2-like-lemma-1}
Let $\sigma=\sigma_1\cdots\sigma_n$ be a $(k,\ell)$-left-crucial permutation. Then, $\sigma$ has an occurrence of $i_{k-1}$ and an occurrence of $d_{\ell-1}$ both beginning at $\sigma_1$ (and having no other elements in common). 
\end{lemma}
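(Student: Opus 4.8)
The plan is to mirror the two halves of the proof of Lemma~\ref{lemma-1}, replacing "extend to the right" by "extend to the left" and correspondingly swapping the roles of positions and values. Throughout I would use only that $\sigma$ avoids $i_k$ and $d_{\ell}$ while every extension of $\sigma$ to the left introduces an occurrence of $i_k$ or $d_{\ell}$; and the standing observation that if $\tau$ is any left extension of $\sigma$, then $\tau_2\cdots\tau_{n+1}$ reduces to $\sigma$, so every occurrence of $i_k$ or $d_{\ell}$ that is \emph{created} in passing from $\sigma$ to $\tau$ must involve the new first element $\tau_1$.

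To produce an occurrence of $i_{k-1}$ beginning at $\sigma_1$, I would take the $\sigma_1$-th extension of $\sigma$ to the left, that is, $\tau=\sigma_1\,(\sigma_1+1)\,e_{\sigma_1}(\sigma_2)\cdots e_{\sigma_1}(\sigma_n)$, so that $\tau_1=\sigma_1$ and the old $\sigma_1$ has become $\tau_2=\sigma_1+1$. Since $\tau_1<\tau_2$, this extension cannot create an occurrence of $d_{\ell}$: a created $d_{\ell}$ would use $\tau_1$ and hence continue with some $\tau_j$, $j\geq 3$, with $\tau_j<\tau_1=\sigma_1$, which forces $\tau_j=\sigma_{j-1}<\sigma_1<\tau_2$, so replacing $\tau_1$ by $\tau_2$ yields a $d_{\ell}$ inside $\tau_2\cdots\tau_{n+1}$, i.e.\ a $d_{\ell}$ in $\sigma$ — impossible. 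Hence the extension creates an occurrence of $i_k$, which must involve $\tau_1$ (necessarily as its first element). The next element of that occurrence must be $\tau_2$: if instead it were $\tau_j$ with $j\geq 3$, then from $\tau_j>\tau_1=\sigma_1$ and $j-1\geq 2$ we get $\tau_j=\sigma_{j-1}+1>\sigma_1+1=\tau_2$, so replacing $\tau_1$ by $\tau_2$ gives an $i_k$ inside $\tau_2\cdots\tau_{n+1}$, contradicting that $\sigma$ avoids $i_k$. Deleting $\tau_1$ from this $i_k$ then leaves an $i_{k-1}$ inside $\tau_2\cdots\tau_{n+1}$ beginning at $\tau_2$, which translates to an occurrence of $i_{k-1}$ in $\sigma$ beginning at $\sigma_1$.

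For the occurrence of $d_{\ell-1}$ beginning at $\sigma_1$ I would run the symmetric argument on the $(\sigma_1+1)$-st extension of $\sigma$ to the left, $\tau=(\sigma_1+1)\,\sigma_1\,e_{\sigma_1+1}(\sigma_2)\cdots e_{\sigma_1+1}(\sigma_n)$, where now $\tau_1=\sigma_1+1>\tau_2=\sigma_1$. By the mirror of the shift argument, this extension cannot create an $i_k$, so it creates a $d_{\ell}$ through $\tau_1$, whose second element must be $\tau_2$, and deleting $\tau_1$ yields a $d_{\ell-1}$ in $\sigma$ beginning at $\sigma_1$. Finally, one occurrence is strictly increasing and the other strictly decreasing, so they share only $\sigma_1$. (Alternatively, more briefly: the reverse-complement $rc(\sigma)$ is $(k,\ell)$-right-crucial, since $rc$ fixes the patterns $i_k$ and $d_{\ell}$ and maps left extensions of $\sigma$ bijectively onto right extensions of $rc(\sigma)$; applying Lemma~\ref{lemma-1} to $rc(\sigma)$ and pulling the two occurrences back through $rc$ gives the statement.)

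The main obstacle is not conceptual but bookkeeping: in each half one must verify carefully that the relabeling $e_i$ acts on the chosen subsequence exactly as claimed and that the neighbour of the inserted element inside the forced monotone pattern is precisely the image of $\sigma_1$ — which is exactly the routine case analysis the authors suppress with "analogous."
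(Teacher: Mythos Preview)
Your proposal is correct and is precisely the ``analogous'' argument the paper has in mind: the authors omit the proof entirely, pointing to Lemma~\ref{lemma-1}, and your two left-extension arguments (by $\sigma_1$ and by $\sigma_1+1$) are the exact mirror of the two right-extension arguments there. Your parenthetical alternative via $rc(\sigma)$ is also sound and is arguably the cleanest way to reduce to Lemma~\ref{lemma-1} directly.
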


\begin{thm}\label{thm-min-crucial}
Any minimal $(k,\ell)$-crucial permutation $\sigma$ is of length $n = k+\ell -3$ and it has the following form: 
\begin{itemize}
\item the final element is $\sigma_n = k-1$; 
\item the first $n -1 = k-\ell-4$ positions of $\sigma$ are occupied by the interleaved occurrences of the pattern $i_{k-2}$ formed by the elements in $\{1, \ldots, k-2\}$ and the pattern $d_{\ell-2}$ formed by the elements in $\{k,\ldots,k+\ell-3\}$. In particular, any minimal $(k,\ell)$-crucial permutation begins with either the smallest or the largest element.
\end{itemize}
\end{thm}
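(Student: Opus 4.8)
The plan is to extract the length bound $n\geq k+\ell-3$ from Lemma~\ref{lemma-1}, then to read off the structure of any $(k,\ell)$-crucial permutation whose length meets that bound, and finally to exhibit such a permutation so that the bound is tight and the structural description applies to every minimal crucial permutation. For the bound, I would simply observe that by Lemma~\ref{lemma-1} a $(k,\ell)$-crucial $\sigma=\sigma_1\cdots\sigma_n$ carries an occurrence of $i_{k-1}$ and an occurrence of $d_{\ell-1}$, both ending at $\sigma_n$ and sharing no element other than $\sigma_n$; together these occupy $(k-1)+(\ell-1)-1=k+\ell-3$ distinct positions, so $n\geq k+\ell-3$.

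Next I would analyse the extremal case $n=k+\ell-3$, in which the two occurrences from Lemma~\ref{lemma-1} use \emph{every} position of $\sigma$. Write $A$ (resp.\ $B$) for the set of positions of the $i_{k-1}$-occurrence (resp.\ $d_{\ell-1}$-occurrence) other than $n$, so $|A|=k-2$, $|B|=\ell-2$ and $\{1,\ldots,n\}=A\sqcup B\sqcup\{n\}$. Every value at a position of $A$ lies below $\sigma_n$ (it precedes $\sigma_n$ in an increasing subsequence) and every value at a position of $B$ lies above $\sigma_n$ (it precedes $\sigma_n$ in a decreasing subsequence). Since $\{1,\ldots,n\}$ thus splits into $k-2$ values below $\sigma_n$, the value $\sigma_n$ itself, and $\ell-2$ values above $\sigma_n$, it follows that $\sigma_n=k-1$, that the values on $A$ are exactly $\{1,\ldots,k-2\}$, and that the values on $B$ are exactly $\{k,\ldots,k+\ell-3\}$. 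Reading $\sigma$ at the positions of $A$ from left to right yields an increasing word exhausting $\{1,\ldots,k-2\}$, hence the word $1\,2\cdots(k-2)$; similarly $\sigma$ at the positions of $B$ is the word $(k+\ell-3)(k+\ell-4)\cdots k$. Therefore $\sigma_1\cdots\sigma_{n-1}$ is a shuffle of these two words, i.e.\ the claimed interleaving of an $i_{k-2}$ on $\{1,\ldots,k-2\}$ with a $d_{\ell-2}$ on $\{k,\ldots,k+\ell-3\}$, and $\sigma_1$ is the first letter of one of them, so $\sigma_1\in\{1,\,k+\ell-3\}$.

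Finally I would check that the bound is attained, so that minimal crucial permutations really have length $k+\ell-3$. In fact I would prove the stronger statement that \emph{every} permutation $\sigma$ of the form just described is $(k,\ell)$-crucial (such permutations obviously exist). From the shuffle structure one sees that the longest increasing subsequence of such a $\sigma$ is $1\,2\cdots(k-2)$ followed by $\sigma_n=k-1$, of length $k-1$, and the longest decreasing subsequence is $(k+\ell-3)\cdots k$ followed by $\sigma_n$, of length $\ell-1$; so $\sigma$ avoids $i_k$ and $d_\ell$ while containing an $i_{k-1}$ and a $d_{\ell-1}$ that both end at $\sigma_n$. Any extension of $\sigma$ to the right appends a new last element whose value is either strictly above $e_i(\sigma_n)$, in which case it lengthens the $i_{k-1}$-occurrence to an $i_k$, or strictly below $e_i(\sigma_n)$, in which case it lengthens the $d_{\ell-1}$-occurrence to a $d_\ell$; hence $\sigma$ is crucial. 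I expect no genuinely hard step here: the care lies in the bookkeeping of the extremal case (forcing the value sets of $A$ and $B$) and in the elementary but essential remark used in the last step that the new element of an $i$-th right-extension is never tied with $e_i(\sigma_n)$.
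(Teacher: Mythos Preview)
Your proposal is correct and follows essentially the same route as the paper: both invoke Lemma~\ref{lemma-1} to obtain the lower bound $n\geq k+\ell-3$ and then read off the forced structure in the extremal case $n=k+\ell-3$. Your write-up is in fact more complete than the paper's two-sentence argument, since you explicitly verify that every permutation of the described shuffle form is $(k,\ell)$-crucial (thereby establishing existence at length $k+\ell-3$), a step the paper leaves implicit.
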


\begin{proof} Since an increasing and a decreasing subsequences in any permutation can have at most one element in common, we see that, by Lemma~\ref{lemma-1}, the length of a minimal $(k,\ell)$-crucial permutation is at least $k+\ell -3$. Moreover, by Lemma~\ref{lemma-1}, we see that the length of any minimal $(k,\ell)$-crucial permutation is $k+\ell-3$ and its structure is as in the statement of the theorem. \end{proof}

\begin{rem}\label{remark-similar-lemma-2} \normalfont It follows immediately from Lemma~\ref{lemma-1}  that every $(k,\ell)$-crucial permutation can be obtained by inserting elements to a minimal $(k,\ell)$-crucial permutation (without changing the relative order of already existing elements). Indeed, given a $(k,\ell)$-crucial permutation we can locate $k+\ell-3$ ``essential elements'' in it discussed in Lemma~\ref{lemma-1}, then remove the remaining elements to obtain a minimal $(k,\ell)$-crucial permutation. Reversing the removing steps, we will obtain the desired result. \end{rem}

The proof of the following lemma, that is closely related to Remark~\ref{remark-similar-lemma-2}, is straightforward, and hence is omitted. 

\begin{lemma}\label{lemma-2}
Let $\sigma=\sigma_1\cdots\sigma_n$ be a $(k,\ell)$-crucial permutation. Let $I$ be a collection of $k+\ell-3$ elements in $\sigma$ forming occurrences of $i_{k-1}$ and $d_{\ell-1}$ both ending at $\sigma_n$ (that exist by Lemma~\ref{lemma-1}). Then removing any element $\sigma_i\not\in I$ and taking the reduced form results in a $(k,\ell)$-crucial permutation in $S_{n-1}$.
\end{lemma}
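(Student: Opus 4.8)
The plan is to show two things: first, that after removing some $\sigma_i \notin I$ and reducing, the resulting permutation $\tau \in S_{n-1}$ still avoids $i_k$ and $d_\ell$; and second, that every one-element extension of $\tau$ to the right creates an occurrence of $i_k$ or $d_\ell$. The avoidance part is immediate and needs only one sentence: deleting an element from a pattern-avoiding permutation and passing to the reduced form cannot create any new occurrence of a pattern, since every subsequence of $\tau$ corresponds to a subsequence of $\sigma$ with the same relative order. So the real content is the crucial (non-extendability) part.

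For the crucial part, I would work directly with the extensions defined in Section~\ref{prelim-sec}. Consider the $j$-th extension of $\tau$ to the right, $\tau' = e_j(\tau_1)\cdots e_j(\tau_{n-1})j$, for some $1 \le j \le n$. The key observation is that $I$ is still present in $\tau$ as a subsequence forming occurrences of $i_{k-1}$ and $d_{\ell-1}$ both ending at the last element of $\tau$ (the last element of $\sigma$ survives the deletion, since $\sigma_n \in I$ because both of these occurrences end at $\sigma_n$ by Lemma~\ref{lemma-1}). Write $m = \mathrm{red}(\sigma_n)$ for the value of this last element in $\tau$. The deleted element $\sigma_i$ lies strictly between $1$ and $n$ in some sense; what matters is that the $i_{k-1}$-occurrence inside $\tau$ ends at value $m$ using $k-2$ smaller elements, and the $d_{\ell-1}$-occurrence ends at $m$ using $\ell-2$ larger elements, and these witnesses are disjoint except at the last element.

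Now I would case on the inserted value $j$ relative to $m$ (after accounting for the shift $e_j$). If the new element appended on the right has value at least that of the last element of $\tau$, then appending it to the $i_{k-1}$-occurrence yields an occurrence of $i_k$. If instead the new element has value at most that of the last element of $\tau$, then appending it to the $d_{\ell-1}$-occurrence yields an occurrence of $d_\ell$. (One must be slightly careful because the relabeling $e_j$ changes which old value equals the inserted value, but since the inserted element is placed at the very end, it is comparable in the obvious way to the relabeled last element of $\tau$, and one of the two inequalities always holds; the appended element is never equal to the last element of $\tau'$ after relabeling since all values are distinct.) In either case $\tau'$ contains a prohibited pattern, so $\tau$ is $(k,\ell)$-crucial, and being of length $n-1$ it lies in $S_{n-1}$ as claimed.

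I expect the main obstacle to be purely bookkeeping: tracking how the reduction map and the extension operators $e_j$ interact, so that the phrase ``the last element of $\tau$'' is handled precisely and one genuinely gets $i_k$ or $d_\ell$ rather than merely $i_{k-1}$ or $d_{\ell-1}$ plus a dangling element. A clean way to sidestep this is to note that the family $I$ of ``essential elements'' behaves exactly as in the proof of Lemma~\ref{lemma-1}: extending $\tau$ on the right by a copy of its last value forces $d_\ell$ via $d_{\ell-1}$, and extending by one more than its last value forces $i_k$ via $i_{k-1}$, and every $j$-th extension dominates one of these two extremes in the relevant coordinate. With that framing the argument is essentially a restriction of the original Lemma~\ref{lemma-1} reasoning to the subsequence $I$, which the authors have evidently judged routine enough to omit.
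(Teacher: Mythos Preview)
Your proposal is correct and is exactly the straightforward argument the authors had in mind; the paper in fact omits the proof of this lemma entirely, stating only that it ``is straightforward, and hence is omitted.'' Your case split on whether the appended value exceeds the (relabeled) last element of $\tau$, using the surviving $i_{k-1}$- and $d_{\ell-1}$-witnesses from $I$, is the natural way to fill in the omitted details.
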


Theorem~\ref{thm-min-crucial}, Lemma~\ref{lemma-2} and formula (\ref{Stanley-formula}) give the following result.

\begin{cor}
There exist $(k,\ell)$-crucial permutations in $S_n$ for each $n$, $k+\ell-3 \leq n \leq (k-1)(\ell-1)$.
\end{cor}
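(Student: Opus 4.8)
The plan is to produce $(k,\ell)$-crucial permutations of every intermediate length by starting from a permutation of the maximal length $(k-1)(\ell-1)$ and then peeling off one element at a time via Lemma~\ref{lemma-2}.

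First I would handle the top of the range, $n=(k-1)(\ell-1)$. Formula~(\ref{Stanley-formula}) is (the square of) a positive integer, so there exists a permutation $\tau$ of length $(k-1)(\ell-1)$ avoiding both $i_k$ and $d_\ell$ (alternatively, one can exhibit the standard $(\ell-1)\times(k-1)$ block-staircase permutation). By the Erd\H{o}s--Szekeres theorem, every permutation of length $(k-1)(\ell-1)+1$ contains an occurrence of $i_k$ or of $d_\ell$; since every extension of $\tau$ to the right has length $(k-1)(\ell-1)+1$, each such extension introduces one of the forbidden patterns. Hence $\tau$ is $(k,\ell)$-right-crucial, that is, $(k,\ell)$-crucial.

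Next I would run the descent. Let $\sigma\in S_n$ be $(k,\ell)$-crucial with $n>k+\ell-3$. By Lemma~\ref{lemma-1}, $\sigma$ contains a set $I$ of exactly $(k-1)+(\ell-1)-1=k+\ell-3$ ``essential'' elements forming occurrences of $i_{k-1}$ and $d_{\ell-1}$ that end at $\sigma_n$ and share only $\sigma_n$. Since $n>|I|$, there is an index $i$ with $\sigma_i\notin I$, and by Lemma~\ref{lemma-2} deleting $\sigma_i$ and passing to the reduced form yields a $(k,\ell)$-crucial permutation in $S_{n-1}$. Starting from $\tau\in S_{(k-1)(\ell-1)}$ and iterating this step, I obtain a $(k,\ell)$-crucial permutation in $S_n$ for every $n$ with $k+\ell-3\le n\le(k-1)(\ell-1)$; the iteration terminates exactly at length $k+\ell-3$, in agreement with Theorem~\ref{thm-min-crucial}.

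There is no real obstacle here: the only two points requiring care are that the maximal-length permutation is genuinely crucial (immediate from the positivity of~(\ref{Stanley-formula}) together with Erd\H{o}s--Szekeres) and that the peeling process never stalls above length $k+\ell-3$, which is guaranteed because any crucial permutation longer than $k+\ell-3$ must possess a non-essential element by the element count in Lemma~\ref{lemma-1}.
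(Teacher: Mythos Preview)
Your proof is correct and follows essentially the same approach as the paper, which simply cites Theorem~\ref{thm-min-crucial}, Lemma~\ref{lemma-2}, and formula~(\ref{Stanley-formula}) without further elaboration. You have spelled out the argument the paper leaves implicit: existence at the top length via~(\ref{Stanley-formula}) and Erd\H{o}s--Szekeres, then iterated deletion of non-essential elements via Lemma~\ref{lemma-2} (with Lemma~\ref{lemma-1} guaranteeing such elements exist above length $k+\ell-3$).
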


The following corollary is immediate from Theorem~\ref{thm-min-crucial}, and we invite the reader to compare this result with a much more sophisticated formula (\ref{Stanley-formula}) for the number of maximal $(k,\ell)$-crucial permutations.

\begin{cor}\label{min-crucial-count}
The number of minimal  $(k,\ell)$-crucial permutations  is ${k+\ell-4 \choose k-2}$.
\end{cor}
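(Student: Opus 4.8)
The plan is to read the count directly off the structural description of minimal $(k,\ell)$-crucial permutations supplied by Theorem~\ref{thm-min-crucial}. That theorem says a minimal $(k,\ell)$-crucial permutation has length $k+\ell-3$, ends in the value $k-1$, and its first $k+\ell-4$ entries form an interleaving (shuffle) of the fixed increasing word $u=1\,2\cdots(k-2)$ on $\{1,\dots,k-2\}$ with the fixed decreasing word $v=(k+\ell-3)(k+\ell-4)\cdots k$ on $\{k,\dots,k+\ell-3\}$. So the first thing to record is that the map sending a minimal $(k,\ell)$-crucial permutation to the corresponding shuffle is injective, and then one needs the converse: that \emph{every} permutation $\sigma=w\,(k-1)$ with $w$ a shuffle of $u$ and $v$ actually is a minimal $(k,\ell)$-crucial permutation. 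Granting the latter, the corollary is just a count of shuffles.

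For the counting step: a shuffle of two fixed words of lengths $k-2$ and $\ell-2$ is uniquely determined by the choice of which $k-2$ of the $k+\ell-4$ available slots are occupied by the letters of $u$ (the remaining slots then receiving the letters of $v$ in their fixed order), and since $u$, $v$, and the trailing letter $k-1$ are all fixed, distinct choices of this slot-set give distinct permutations. Hence there are exactly $\binom{k+\ell-4}{k-2}$ such permutations, which is the asserted number provided the correspondence is a genuine bijection with the set of minimal $(k,\ell)$-crucial permutations.

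The converse — showing each shuffle-permutation $\sigma$ really is minimal $(k,\ell)$-crucial — is the only place any work is needed, and it is short. Avoidance of $i_k$: because $v$ lists the $\ell-2$ largest values in decreasing order, any increasing subsequence of $\sigma$ contains at most one letter of $v$; together with the at most $k-2$ letters of $u$ and the single trailing value $k-1$ (which lies above all of $u$ and below all of $v$), the longest increasing subsequence has length at most $k-1$, and symmetrically the longest decreasing subsequence has length at most $\ell-1$. Cruciality: $\sigma$ contains the occurrence $u\,(k-1)$ of $i_{k-1}$ and the occurrence $v\,(k-1)$ of $d_{\ell-1}$, both ending at the last position (sharing only $\sigma_n$); any $i$-th right extension relabels the old entries order-preservingly and appends the value $i$, which by the definition of $e_i$ is either larger than the image of $k-1$ — extending the $i_{k-1}$ to an $i_k$ — or smaller than it — extending the $d_{\ell-1}$ to a $d_\ell$ — while the $(n+1)$-st extension appends the global maximum and manifestly creates an $i_k$. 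Thus $\sigma$ is $(k,\ell)$-crucial, and since Lemma~\ref{lemma-1} already forces every $(k,\ell)$-crucial permutation to have length at least $k+\ell-3$, such a $\sigma$ of length $k+\ell-3$ is minimal. The main obstacle, then, is purely bookkeeping: pinning down that the shuffle description is a bijection by verifying both that no shuffle-permutation fails to be crucial (the relabelling step above) and that no two shuffles coincide; once that is done, the corollary follows immediately.
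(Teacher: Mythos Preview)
Your proof is correct and follows the same route as the paper: read the count off the structural description in Theorem~\ref{thm-min-crucial} and observe that the shuffles of $u$ and $v$ are counted by $\binom{k+\ell-4}{k-2}$. The paper simply declares the corollary ``immediate'' from that theorem, whereas you are more careful in checking the converse direction (that every shuffle-permutation really is $(k,\ell)$-crucial), a point the paper's statement and proof of Theorem~\ref{thm-min-crucial} leave implicit; your verification of this is sound.
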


Another comparison that one can make is between minimal and non-extendable $(k,\ell)$-crucial permutations: while the former always begin with the smallest or the largest elements, the later never begin with these elements. Indeed, suppose that a non-extendable $(k,\ell)$-crucial permutation $\sigma\in S_n$ begins with 1 (resp., $n$) and $k,\ell\geq 3$. Then, extending $\sigma$ to the left by $2$ (resp., $n$) we obtain a permutation in $S_{n+1}$ that is clearly $(k,\ell)$-crucial, which  contradicts to $\sigma$ being non-extendable.

Yet another corollary of Theorem~\ref{thm-min-crucial} is the following result. 

\begin{thm}\label{thm-min-bicrucial-structure}
Suppose that $k\geq\ell$. Any minimal $(k,\ell)$-bicrucial permutation $\sigma=\sigma_1\cdots\sigma_n$ is of length $n = k+2\ell -5$ and it has the following form: 
\begin{itemize}
\item $\sigma_1=\ell-1$;
\item $\sigma_n = k+\ell-3$; 
\item the elements in $\{1,2,\ldots,\ell-1\}$ and those in $\{k+\ell-3,\ldots,k+2\ell-5\}$ are in decreasing order, while the elements in $\{\ell-1,\ell,\ldots,k+\ell-3\}$ are in increasing order.
\end{itemize}
\end{thm}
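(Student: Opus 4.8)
The plan is to reduce the bicrucial case to the already-established right-crucial case (Theorem~\ref{thm-min-crucial}) together with its mirror image, Lemma~\ref{lemma-2-like-lemma-1}, applied to the left end. Recall that $\sigma$ being $(k,\ell)$-bicrucial means $\sigma$ is $(k,\ell)$-right-crucial and $(k,\ell)$-left-crucial simultaneously. From Lemma~\ref{lemma-1}, $\sigma$ contains an $i_{k-1}$ and a $d_{\ell-1}$ both ending at $\sigma_n$, overlapping only in $\sigma_n$; from Lemma~\ref{lemma-2-like-lemma-1}, $\sigma$ contains an $i_{k-1}$ and a $d_{\ell-1}$ both beginning at $\sigma_1$, overlapping only in $\sigma_1$. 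The first step is a lower bound on $n$: I would argue that the ``right'' increasing chain of length $k-1$ ending at $\sigma_n$ and the ``left'' increasing chain of length $k-1$ beginning at $\sigma_1$ can be combined with the ``left'' decreasing chain of length $\ell-1$ beginning at $\sigma_1$ to force $n \ge k+2\ell-5$. Concretely, the left-decreasing $d_{\ell-1}$ starting at $\sigma_1$ has its other $\ell-2$ elements disjoint from the right-increasing $i_{k-1}$ ending at $\sigma_n$ except possibly for $\sigma_1$ itself (an increasing and a decreasing sequence share at most one element), so we already get $k-1 + (\ell-2) = k+\ell-3$ distinct positions; then the left-increasing $i_{k-1}$ beginning at $\sigma_1$ contributes $k-2$ more elements above $\sigma_1$, but we must be careful which of these are genuinely new. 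The cleaner route is to observe that the values below $\sigma_1$ give a decreasing run (forced by left-criticality plus avoidance of $d_\ell$), the values above $\sigma_n$ give a decreasing run ending before position $n$ — wait, I should instead track the value sets directly.

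A more transparent approach: apply Theorem~\ref{thm-min-crucial} to $\sigma$ in its role as a right-crucial permutation, but only to get the length bound $n\ge k+\ell-3$ is too weak. Instead I would use both lemmas to produce three essentially disjoint monotone runs. Let $R$ be an $i_{k-1}$ ending at $\sigma_n$, let $L_d$ be a $d_{\ell-1}$ beginning at $\sigma_1$, and let $L_i$ be an $i_{k-1}$ beginning at $\sigma_1$. Since $L_d$ and $R$ are of opposite monotonicity they meet in at most one element; since $L_d$ and $L_i$ are of opposite monotonicity they meet only in $\sigma_1$; and a careful count shows $|L_i \setminus (R\cup\{\sigma_1\})| \ge \ell-2$ because the $k-2$ elements of $L_i$ above $\sigma_1$ together with the first $k-1$ elements... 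I would organize this as: the $\ell-2$ values strictly smaller than $\sigma_1$ appearing on $L_d$, the $\ell-2$ values... Actually, the slickest version is to note that $\sigma$ restricted to a suitable set of $k+2\ell-5$ ``essential elements'' is itself $(k,\ell)$-bicrucial and of the claimed form, so by minimality $n=k+2\ell-5$; that essential set consists of an $i_k$-potential increasing block of size $k+\ell-3$ (the elements $\{\ell-1,\ldots,k+\ell-3\}$) crossed by a decreasing block of size $\ell-1$ at the bottom ($\{1,\ldots,\ell-1\}$) and a decreasing block of size $\ell-1$ at the top ($\{k+\ell-3,\ldots,k+2\ell-5\}$), the two decreasing blocks anchored at $\sigma_1=\ell-1$ and $\sigma_n=k+\ell-3$ respectively.

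So the skeleton of the proof is: (1) establish $n\ge k+2\ell-5$ by exhibiting the three monotone runs above and counting their union of \emph{values}; (2) for a minimal $\sigma$, equality forces every value to lie on one of these runs, which pins down that $\sigma_1$ is the $(\ell-1)$st smallest value (it must top a decreasing run of length $\ell-1$ going down and bottom an increasing run going up, and nothing else is free), hence $\sigma_1=\ell-1$; symmetrically $\sigma_n$ is the $(\ell-1)$st largest, i.e.\ $\sigma_n=n-(\ell-2)=k+\ell-3$; (3) the values $\{1,\ldots,\ell-1\}$ must appear in decreasing order (any ascent among them plus the increasing run from $\sigma_1$ upward would build an $i_k$, or rather: they form the bottom of the left decreasing $d_{\ell-1}$, so their relative order is forced decreasing and moreover they must be a prefix-like descending run), similarly $\{k+\ell-3,\ldots,k+2\ell-5\}$ in decreasing order at the right end, and the middle block $\{\ell-1,\ldots,k+\ell-3\}$ in increasing order; (4) conversely check such a $\sigma$ is genuinely $(k,\ell)$-bicrucial (it avoids $i_k$ and $d_\ell$ since the longest increasing run has length $k-1$ and longest decreasing run has length $\ell-1$, and any extension on either side creates one — this mirrors the verification in Theorem~\ref{thm-min-crucial}).

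The main obstacle I anticipate is step~(2)–(3): turning ``the $k+2\ell-5$ values each lie on one of three specified monotone runs'' into the rigid statement that the three value-intervals $\{1,\ldots,\ell-1\}$, $\{\ell-1,\ldots,k+\ell-3\}$, $\{k+\ell-3,\ldots,k+2\ell-5\}$ are exactly the supports of those runs and appear in the prescribed monotone orders. One has to rule out, e.g., a value of the ``middle'' increasing run slipping below $\sigma_1$ in position while staying above it in value, which would scramble the clean interval picture; the key tool is that $\sigma$ simultaneously avoids $i_k$ and $d_\ell$, so the increasing and decreasing runs are maximal-length and any element not accounted for would extend one of them. Getting this bookkeeping exactly right — in particular handling the shared endpoints $\sigma_1$ and $\sigma_n$ between the increasing and decreasing runs, and the hypothesis $k\ge\ell$ which ensures the middle increasing block is the ``long'' one — is where the real work lies; the rest is a routine adaptation of the argument already given for Theorem~\ref{thm-min-crucial}.
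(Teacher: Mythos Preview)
Your approach is essentially the paper's: invoke Lemma~\ref{lemma-1} and Lemma~\ref{lemma-2-like-lemma-1} to obtain four monotone subsequences (an $i_{k-1}$ and a $d_{\ell-1}$ anchored at each end) and argue that minimality forces a rigid overlap pattern. The paper, however, collapses your meandering count into one observation you circle around but never state cleanly: writing $A$ for the right-anchored $i_{k-1}$, $B$ for the right-anchored $d_{\ell-1}$, $C$ for the left-anchored $i_{k-1}$, and $D$ for the left-anchored $d_{\ell-1}$, minimality together with $k\ge\ell$ forces $A=C$. Once the two increasing runs coincide, the values of $B\setminus\{\sigma_n\}$ lie strictly above $\sigma_n$ and those of $D\setminus\{\sigma_1\}$ lie strictly below $\sigma_1<\sigma_n$, so $B$ and $D$ are disjoint from each other and meet $A=C$ only at the endpoints; this gives $n=(k-1)+(\ell-2)+(\ell-2)=k+2\ell-5$ and pins down $D=\{1,\ldots,\ell-1\}$, $A=C=\{\ell-1,\ldots,k+\ell-3\}$, $B=\{k+\ell-3,\ldots,k+2\ell-5\}$ at once.

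Your early attempt with $R,L_d,L_i$ (that is, $A,D,C$) cannot deliver the bound $n\ge k+2\ell-5$: the two increasing sequences $R$ and $L_i$ may coincide completely, so $|R\cup L_d\cup L_i|$ can be as small as $k+\ell-3$. The lower bound genuinely needs \emph{both} decreasing sequences $B$ and $D$, not both increasing ones --- you only recover this in your ``slickest version'' paragraph (one increasing block flanked by two decreasing blocks), and there the increasing block $\{\ell-1,\ldots,k+\ell-3\}$ has size $k-1$, not $k+\ell-3$ as you wrote. With the $A=C$ observation in hand, your steps~(2)--(4) become immediate and the bookkeeping you flag as the main obstacle dissolves.
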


\begin{proof} Since $\sigma$ is $(k,\ell)$-bicrucial, it is also $(k,\ell)$-crucial, and by Lemma~\ref{lemma-1}, $\sigma$ has an occurrence of $i_{k-1}$, say formed by the elements in a set $A$, and an occurrence of $d_{\ell-1}$, say formed by the elements in a set $B$, both ending at $\sigma_n$ (hence $A\cap B=\{\sigma_n\}$). 

Moreover, since $\sigma$ is also $(k,\ell)$-left-crucial, by Lemma~\ref{lemma-2-like-lemma-1}, $\sigma$ has an occurrence of $i_{k-1}$, say formed by the elements in a set $C$, and an occurrence of $d_{\ell-1}$, say formed by the elements in a set $D$, both beginning at $\sigma_1$ (hence $C\cap D=\{\sigma_1\}$). 

Since an increasing and a decreasing sequences can share at most one element, any minimal $(k,\ell)$-bicrucial permutation must have $A=C$ (which minimizes the length of a $(k,\ell)$-bicrucial permutation; here we use the fact that $k\geq\ell$ and hence did not consider the other possibility of minimizing the length, namely, $B=D$). From this observation, we recover immediately that $D=\{1,2,\ldots,\ell-1\}$, $A=C=\{\ell-1,\ell,\ldots,k+\ell-3\}$, $B=\{k+\ell-3,\ldots,k+2\ell-5\}$,  $\sigma_1=\ell-1$, $\sigma_n = k+\ell-3$ and $n=k+2\ell -5$.

 \end{proof}

\begin{thm}\label{thm-min-top-right-crucial}
There is exactly one minimal $(k,\ell)$-top-right-crucial permutation that is of length $k+\ell-3$ and is of the form $$12\cdots (k-2)(k+\ell-3)(k+\ell-4)\cdots (k-1).$$
\end{thm}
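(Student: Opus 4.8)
The plan is to read off the answer from the structure of minimal $(k,\ell)$-crucial permutations given in Theorem~\ref{thm-min-crucial} and then cut down to the single one that also survives every extension from above. Since a top-right-crucial permutation is in particular $(k,\ell)$-crucial, it has $i_{k-1}$ and $d_{\ell-1}$ ending at its last letter (Lemma~\ref{lemma-1}), so its length is at least $k+\ell-3$; hence it suffices to decide which of the minimal $(k,\ell)$-crucial permutations — all of length $k+\ell-3$ and of the shape in Theorem~\ref{thm-min-crucial} — are top-right-crucial, and to exhibit at least one. Throughout I would assume $k,\ell\ge 3$: when $k=2$ or $\ell=2$ the unique minimal crucial permutation is $(\ell-1)(\ell-2)\cdots 1$ respectively $12\cdots(k-1)$, which is immediately top-right-crucial and matches the stated formula.

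First I would reduce the ``from above'' condition. Inserting the new largest letter $M=n+1$ at position $i$ into $\sigma$ produces $\sigma_1\cdots\sigma_{i-1}M\sigma_i\cdots\sigma_n$; since $\sigma$ avoids $i_k$ and $d_\ell$, every occurrence of $i_k$ or $d_\ell$ in the new permutation must use $M$, and $M$, being maximal, can only be the last letter of an increasing subsequence and the first letter of a decreasing one. Therefore this extension creates an occurrence of $i_k$ iff $\sigma_1\cdots\sigma_{i-1}$ contains $i_{k-1}$, and creates an occurrence of $d_\ell$ iff $\sigma_i\cdots\sigma_n$ contains $d_{\ell-1}$. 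So a $(k,\ell)$-crucial permutation $\sigma$ of length $n$ is top-right-crucial iff for every $i\in\{1,\dots,n+1\}$ either $\sigma_1\cdots\sigma_{i-1}$ contains $i_{k-1}$ or $\sigma_i\cdots\sigma_n$ contains $d_{\ell-1}$. Both conditions are monotone in $i$ (the prefix grows, the suffix shrinks), and the boundary cases $i=1$ and $i=n+1$ hold automatically by Lemma~\ref{lemma-1}. Writing $i^\ast$ for the least $i$ with $\sigma_1\cdots\sigma_{i-1}$ containing $i_{k-1}$ and $j^\ast$ for the greatest $i$ with $\sigma_i\cdots\sigma_n$ containing $d_{\ell-1}$, the permutation is top-right-crucial precisely when $i^\ast\le j^\ast+1$.

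Next I would compute these thresholds for a minimal $(k,\ell)$-crucial $\sigma$. By Theorem~\ref{thm-min-crucial}, $\sigma_n=k-1$ and the first $n-1$ positions interleave the block $L=\{1,\dots,k-2\}$ (appearing increasingly) with the block $H=\{k,\dots,k+\ell-3\}$ (appearing decreasingly). Since $H$ is decreasing, an increasing subsequence uses at most one element of $H$, and since $L$ is increasing, a decreasing subsequence uses at most one element of $L$; together with the fact that $\sigma_n=k-1$ lies strictly between $L$ and $H$ in value and occurs last, this gives: a proper prefix $\sigma_1\cdots\sigma_{i-1}$ (with $i\le n$, so excluding $\sigma_n$) contains $i_{k-1}$ iff it contains all $k-2$ elements of $L$ together with an element of $H$ lying to the right of all of them; and a suffix $\sigma_i\cdots\sigma_n$ contains $d_{\ell-1}$ iff it contains all $\ell-2$ elements of $H$ (to which $\sigma_n$ is then appended). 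Let $m$ be the position of $k-2$ (the rightmost element of $L$) and $p$ the position of $k+\ell-3$ (the leftmost element of $H$); then $j^\ast=p$. If no element of $H$ lies to the right of position $m$, equivalently $m=n-1$ (i.e.\ $\sigma_{n-1}=k-2$), then no proper prefix contains $i_{k-1}$, so $i^\ast=n+1>p+1=j^\ast+1$ and $\sigma$ is not top-right-crucial. Otherwise, with $t$ the smallest position of an $H$-element exceeding $m$, one gets $i^\ast=t+1\ge p+1$, with equality iff $t=p$, i.e.\ iff $p>m$. Hence $\sigma$ is top-right-crucial iff the leftmost element of $H$ lies to the right of $k-2$; as $L$ is increasing and $H$ decreasing, this forces every element of $L$ to precede every element of $H$, which pins down $\sigma$ uniquely as $12\cdots(k-2)(k+\ell-3)(k+\ell-4)\cdots k(k-1)$. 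For this $\sigma$ one has $m=k-2$ and $p=k-1$, so $i^\ast=k=(k-1)+1=j^\ast+1$, confirming it is top-right-crucial and of length $k+\ell-3$.

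The main obstacle is the bookkeeping in the last paragraph: pinning down exactly which prefixes carry $i_{k-1}$ and which suffixes carry $d_{\ell-1}$ in an arbitrary minimal crucial permutation, while tracking the special role of $\sigma_n=k-1$ — it lies in every suffix but in no proper prefix, it is sandwiched in value between $L$ and $H$, and it can prolong either an increasing run inside $L$ or a decreasing run inside $H$. Once these descriptions are in place, the reduction of the ``from above'' condition in the first step is routine and the uniqueness falls out of the one-line comparison $i^\ast\le j^\ast+1$.
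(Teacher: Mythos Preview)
Your proof is correct and follows essentially the same route as the paper's: both start from Theorem~\ref{thm-min-crucial} to reduce to minimal $(k,\ell)$-crucial permutations and then analyse insertions of the new maximum element, using that such an insertion produces $i_k$ iff the prefix contains $i_{k-1}$ and produces $d_\ell$ iff the suffix contains $d_{\ell-1}$. The paper simply jumps to the critical insertion point (immediately to the right of $k+\ell-3$) and reads off the constraint directly, whereas you package the same idea as the threshold inequality $i^\ast\le j^\ast+1$ and compute both thresholds; your version is longer but makes the monotonicity and the role of the single critical position fully explicit.
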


\begin{proof} Any $(k,\ell)$-top-right-crucial permutation $\sigma$ is $(k,\ell)$-crucial and hence Theorem~\ref{thm-min-crucial}  can be applied to learn about the relative positions of elements in $A=\{1, \ldots, k-2\}$, those in $B=\{k,\ldots,k+\ell-3\}$, and the element $k-1$. Now, extending $\sigma$ from above must result in an occurrence of $i_k$ or $d_{\ell}$. Inserting $k+\ell-2$ (the new maximum element) anywhere to the left of $k+\ell-3$ results in an occurrence of $d_{\ell}$ involving all the elements in $B$ and the element $k-1$. On the other hand, inserting $k+\ell-2$ to the right of $k+\ell-3$ cannot result in an occurrence of $d_{\ell}$ and hence must result in an occurrence of $i_k$ that can involve only one element in $B\cup\{k-1\}$. Considering inserting $k+\ell-2$ immediately to the right of $k+\ell-3$, we see that all the elements in $A$ must be placed (in increasing order) to the left of $k+\ell-3$, which gives the required structure. \end{proof}

To characterize $(k,\ell)$-left-, $(k,\ell)$-top- and $(k,\ell)$-bottom-crucial permutations in Section~\ref{RSK-characterization-crucial}, we need the following result.

\begin{lemma}\label{crucial-types}
Let $\sigma \in S_n$. The following statements are equivalent: 
\begin{itemize}
\item $\sigma$ is $(k,\ell)$-right-crucial;
\item the reverse permutation $r(\sigma)$  is $(\ell, k)$-left-crucial;
\item the inverse permutation $\sigma^{-1}$ is $(k,\ell)$-top-crucial;
\item the complement of the inverse permutation $c(\sigma^{-1})$ is $(\ell, k)$-bottom-crucial.
\end{itemize}
\end{lemma}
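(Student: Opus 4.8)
The plan is to prove the four-way equivalence by establishing three separate bi-implications, each one being a direct consequence of how a particular symmetry of the permutation diagram interacts with the operations ``extend to the right/left/from above/from below'' and with the patterns $i_k$ and $d_\ell$. The underlying principle is that each of reverse, inverse, and complement is an involution on $S_n$ that permutes the four ``directions'' of extension and swaps or fixes the two monotone patterns in a predictable way; so crucial-ness, being a conjunction of a pattern-avoidance condition and a statement about all extensions in one direction, is transported faithfully.

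First I would record the basic dictionary. For the pattern-avoidance part: $\sigma$ avoids $i_k$ and $d_\ell$ if and only if $r(\sigma)$ avoids $d_k$ and $i_\ell$ (reverse turns increasing runs into decreasing ones and vice versa), i.e.\ $r(\sigma)$ avoids $i_\ell$ and $d_k$; if and only if $\sigma^{-1}$ avoids $i_k$ and $d_\ell$ (taking inverse preserves the length of the longest increasing subsequence and the length of the longest decreasing subsequence --- this is immediate from the geometric picture, or from the fact that RSK sends $\sigma^{-1}$ to $(Q,P)$, cf.\ Remark~\ref{longest-remark}); and if and only if $c(\sigma^{-1})$ avoids $d_k$ and $i_\ell$, i.e.\ avoids $i_\ell$ and $d_k$ (complement swaps increasing and decreasing). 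Second, the dictionary for extensions. Reversing a permutation turns an extension to the right of $\sigma$ into an extension to the left of $r(\sigma)$ and conversely, with the reverse of the extended permutation being exactly the corresponding extension of $r(\sigma)$; here one must check that the index-shifting map $e_i$ in the definition of the $i$-th extension is compatible with reversal, which it is because $e_i$ depends only on values, not positions. Taking the inverse of a permutation interchanges the roles of positions and values, hence turns ``append a new rightmost position'' ($i$-th extension to the right) into ``insert a new largest value'' ($i$-th extension from above); again one checks that $e_i$ on $\sigma$ corresponds to the analogous value-shift on $\sigma^{-1}$ when a new position is created versus a new value. Finally, complementing reverses the value order, so it turns ``extension from above'' into ``extension from below'' and swaps $i_k \leftrightarrow d_\ell$ appropriately.

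With this dictionary in hand the proof is a short verification of each link in the chain. For the first equivalence: $\sigma$ is $(k,\ell)$-right-crucial iff $\sigma$ avoids $\{i_k,d_\ell\}$ and every extension of $\sigma$ to the right contains $i_k$ or $d_\ell$; applying $r$ and using the two halves of the dictionary, this holds iff $r(\sigma)$ avoids $\{i_\ell,d_k\}$ and every extension of $r(\sigma)$ to the left contains $i_\ell$ or $d_k$, which is precisely the statement that $r(\sigma)$ is $(\ell,k)$-left-crucial. For the second: $\sigma$ is $(k,\ell)$-right-crucial iff $\sigma$ avoids $\{i_k,d_\ell\}$ and every right-extension contains $i_k$ or $d_\ell$; applying the inverse and using that inversion fixes the pattern set and carries right-extensions to above-extensions, this is iff $\sigma^{-1}$ avoids $\{i_k,d_\ell\}$ and every above-extension contains $i_k$ or $d_\ell$, i.e.\ $\sigma^{-1}$ is $(k,\ell)$-top-crucial. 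For the third: $\sigma^{-1}$ is $(k,\ell)$-top-crucial iff, applying $c$, $c(\sigma^{-1})$ avoids $\{i_\ell,d_k\}$ and every below-extension contains $i_\ell$ or $d_k$, i.e.\ $c(\sigma^{-1})$ is $(\ell,k)$-bottom-crucial. Chaining these gives all four statements equivalent.

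The main obstacle --- really the only place where care is needed rather than bookkeeping --- is verifying that the \emph{$i$-th} extensions (as opposed to merely the top/bottom $(n+1)$-st ones) correspond correctly under inverse and reverse, because the definition involves the value-shift $e_i$ and it is easy to get an off-by-one or a mismatch between ``new position'' and ``new value''. Concretely, one should check: (a) that $r(e_i(\sigma_1)\cdots e_i(\sigma_n)\,i)$ equals the $i$-th left-extension of $r(\sigma)$, and similarly for the $(n+1)$-st extension; and (b) that the inverse of the $i$-th right-extension of $\sigma$ is the $i$-th above-extension of $\sigma^{-1}$ --- intuitively clear since in $\sigma^{-1}$ a new rightmost column of $\sigma$ becomes a new topmost row, but worth spelling out via the position/value transpose. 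Once these compatibilities are in place, the rest is the routine chain of iff's described above. I would also remark that $c$ and $r$ commute with each other and that $c(\sigma)^{-1}=r(\sigma^{-1})$-type identities are not needed here; only the three stated involutions applied one at a time are used.
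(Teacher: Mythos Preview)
Your proposal is correct and follows essentially the same approach as the paper's proof: both argue via the action of the diagram symmetries (reverse $=$ flip columns, inverse $=$ transpose, complement $=$ flip rows) on monotone patterns and on the four extension directions. The paper is terser---it simply asserts that adding a column on the right becomes adding a column on the left (resp.\ a row on top) under reversal (resp.\ transposition)---whereas you take extra care to note that the value-shift $e_i$ in the definition of the $i$-th extension is compatible with these symmetries; this is a welcome clarification but not a different method.
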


\begin{proof}
Let $\mathcal{D}$ be the two-dimensional diagram corresponding to a $(k,\ell)$-right-crucial permutation $\sigma$. By definition, adding a column to $\mathcal{D}$ from the right results in an occurrence of the pattern $i_k$ or $d_{\ell}$.

The reverse permutation $r(\sigma)$ has the diagram $r(\mathcal{D})$ obtained from $\mathcal{D}$ by reversing the order of its columns. Any occurrence of the pattern $i_m$ in $\sigma$ corresponds to an occurrence of the pattern $d_{m}$ in $r(\sigma)$, and vice versa. Hence, non-extendability of $\sigma$ to the right with respect to $i_k$ and $d_{\ell}$ is equivalent to non-extendability of $r(\sigma)$ to the left with respect to $i_{\ell}$ and $d_{k}$.

The inverse permutation $\sigma^{-1}$ has the diagram $\mathcal{D}'$ obtained from $\mathcal{D}$ by transposition with respect to the secondary diagonal. Every occurrence of the pattern $i_m$ (resp., $d_m$) in $\sigma$ corresponds to an occurrence of the pattern $i_{m}$ (resp., $d_m$) in $\sigma^{-1}$. Hence, non-extendability of $\sigma$ to the right with respect to $i_k$ and $d_{\ell}$ is equivalent to non-extendability of $\sigma^{-1}$ from above with respect to $i_k$ and $d_{\ell}$.

The complement permutation $c(\sigma)$ has the diagram $c(\mathcal{D})$ obtained from $\mathcal{D}$ by reversing the order of its rows. Any occurrence of the pattern $i_m$ in $\sigma$ corresponds to an occurrence of the pattern $d_{m}$ in $c(\sigma)$, and vice versa. Hence, non-extendability of $\sigma$ to the right with respect to $i_k$ and $d_{\ell}$ is equivalent to non-extendability of $c(\sigma^{-1})$ from below with respect to $i_{\ell}$ and $d_{k}$.
\end{proof}

\section{Characterization of $(k,\ell)$-crucial permutations via RSK correspondence}\label{RSK-characterization-crucial}

\begin{thm} \label{cruchar}
Let $\sigma\in S_n$ and $(P,Q)$ be the pair of SYT corresponding to $\sigma$. Then, $\sigma$ is $(k,\ell)$-right-crucial if and only if the following holds:
\begin{itemize}
\item the number of columns in $P$ (and $Q$) is $k-1$;
\item the number of rows in $P$ (and $Q$) is $\ell-1$;
\item $P$  contains an increasing sequence of elements $p_{1, k-1}, p_{2,j_2}, \ldots, p_{\ell-1, j_{\ell-1}}$ for some $j_2, \ldots, j_{\ell-1}$.
\end{itemize}
\end{thm}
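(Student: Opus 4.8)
The plan is to prove the two implications by different means: the forward one by tracking what insertion of the last letter $\sigma_n$ does to $P$, and the converse one by combining an ``extension dictionary'' with Knuth (plactic) invariance.

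\emph{From right-crucial to the three conditions.} Here I would start from Lemma~\ref{lemma-1}: a $(k,\ell)$-right-crucial $\sigma$ carries an occurrence of $i_{k-1}$ and an occurrence of $d_{\ell-1}$ both ending at $\sigma_n$. Since $\sigma$ avoids $i_k$ and $d_\ell$, these force $\mathrm{LIS}(\sigma)=k-1$ and $\mathrm{LDS}(\sigma)=\ell-1$, i.e.\ (Remark~\ref{longest-remark}) $P$ has $k-1$ columns and $\ell-1$ rows. For the third condition I trace the final insertion step: $\sigma_n$ enters the first row of $P_{n-1}$ in the column equal to the length of a longest increasing subsequence of $\sigma$ ending at $\sigma_n$, which is $k-1$, so $\sigma_n$ settles in cell $(1,k-1)$ and $p_{1,k-1}=\sigma_n$. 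Its bumping route then descends through exactly $\ell-1$ rows — this number is the length of a longest decreasing subsequence of $\sigma$ ending at $\sigma_n$, namely $\ell-1$ — and the values $\sigma_n=y_0<y_1<\cdots<y_{\ell-2}$ successively displaced along the route (each bumped entry exceeds its bumper) come to rest, in the final tableau $P$, in cells lying in rows $1,2,\dots,\ell-1$, with $y_0$ occupying $(1,k-1)$. This is exactly an increasing sequence $p_{1,k-1},p_{2,j_2},\dots,p_{\ell-1,j_{\ell-1}}$. (When $\ell=2$ the route has length $1$ and the third condition reduces to the existence of the cell $(1,k-1)$, which is automatic.)

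\emph{From the three conditions to right-crucial.} The first two conditions say $\mathrm{LIS}(\sigma)=k-1$ and $\mathrm{LDS}(\sigma)=\ell-1$, so $\sigma$ already avoids $i_k$ and $d_\ell$; I must show every right extension reintroduces one of them. The useful dictionary is: the $i$-th right extension of $\sigma$ contains $i_k$ precisely when $\sigma$ has an increasing subsequence of length $k-1$ using only values $\le i-1$, and it contains $d_\ell$ precisely when $\sigma$ has a decreasing subsequence of length $\ell-1$ using only values $\ge i$. Set $m:=p_{1,k-1}$; under the first condition this is the largest entry of the first row of $P$, i.e.\ the smallest value terminating a longest increasing subsequence of $\sigma$, so $\sigma$ has an increasing subsequence of length $k-1$ with all values $\le m$, which settles every extension with $i\ge m+1$. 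It remains to extract from the third condition one decreasing subsequence of $\sigma$ of length $\ell-1$ with all values $\ge m$, which then settles every extension with $i\le m$. For this I would observe that ``longest decreasing subsequence of $\sigma$ among values $\ge m$'' is a plactic invariant of $\sigma$: restricting a word to its letters $\ge m$ sends each elementary Knuth move to a Knuth move or to the identity, so Knuth-equivalent words have Knuth-equivalent restrictions, and Knuth equivalence is detected by the insertion tableau. Hence I may compute this quantity on the row-reading word $w(P)$ of $P$ (rows read bottom to top, left to right), which again satisfies $P(w(P))=P$; reading the entries $p_{1,k-1},p_{2,j_2},\dots,p_{\ell-1,j_{\ell-1}}$ from the lowest row upward exhibits, inside $w(P)$ and in that order, the values $p_{\ell-1,j_{\ell-1}}>\cdots>p_{2,j_2}>p_{1,k-1}=m$, a decreasing subsequence of length $\ell-1$ with all values $\ge m$. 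Knuth invariance transports it back to $\sigma$, finishing the argument.

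The crux, and the step I expect to demand the most care, is this last transfer through plactic equivalence. One cannot simply read a decreasing subsequence of $\sigma$ off the cells $p_{1,k-1},p_{2,j_2},\dots$ directly: for a general $\sigma$ with a given $P$ those particular entries need not appear in decreasing order (small examples already show this), so the argument genuinely needs either the Knuth-invariance detour above or, equivalently, a jeu-de-taquin argument that rectifying the sub-skew-tableau of $P$ supported on the entries $\ge m$ retains $\ell-1$ rows. The remaining ingredients — that inserting a final letter enters the first row in its ``longest-increasing-subsequence'' column and bumps down through its ``longest-decreasing-subsequence'' number of rows, and that a tableau's reading word re-inserts to it — are standard properties of the RSK correspondence that I would cite rather than reprove.
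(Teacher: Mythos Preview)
Your converse direction is correct and is genuinely different from the paper's proof. The paper argues the $\Leftarrow$ direction by a direct bumping analysis: for an extension by $t\le p_{1,k-1}$ it tracks the cascade row by row, using the hypothesis $p_{1,k-1}<p_{2,j_2}<\cdots<p_{\ell-1,j_{\ell-1}}$ to force the bumped element in each row to be smaller than the next $p_{i,j_i}$, so the cascade reaches row $\ell$ and $d_\ell$ appears. Your route via Knuth invariance of ``longest decreasing subsequence among values $\ge m$'' is slicker and more conceptual; it also has the virtue (see below) of repairing your forward direction.

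The forward direction, however, has a real gap. The assertion that the bumping route of $\sigma_n$ descends through exactly $\ell-1$ rows, with $\ell-1$ equal to the longest decreasing subsequence of $\sigma$ ending at $\sigma_n$, is false. Take $\sigma=312$, which is $(3,3)$-right-crucial. Here $\sigma_n=2$, the longest decreasing subsequence ending at $2$ is $3,2$ of length $2=\ell-1$, but $P_{n-1}=P(31)$ has first row $[1]$, so inserting $2$ simply appends it to row $1$; the route touches a single row and no values $y_1,\dots,y_{\ell-2}$ are displaced. (The required increasing sequence $p_{1,2}=2<p_{2,1}=3$ does exist in $P$, but it is not produced by the insertion of $\sigma_n$.) The dual of the true patience-sorting fact you invoke for the \emph{column} of $\sigma_n$ in row $1$ does not hold for the \emph{row} of the new cell under row insertion.

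A clean fix uses your own plactic lemma. You already establish $p_{1,k-1}=\sigma_n$. By Lemma~\ref{lemma-1}, $\sigma$ has a decreasing subsequence of length $\ell-1$ ending at $\sigma_n$, hence a decreasing subsequence of length $\ell-1$ with all values $\ge m:=p_{1,k-1}$. By your Knuth-restriction argument, the row word $w(P)$ has one too. But in $w(P)$ a decreasing subsequence selects at most one entry per row, and the only entry of row $1$ that is $\ge m$ is $p_{1,k-1}$ itself; so a length-$(\ell-1)$ decreasing subsequence among values $\ge m$ must pick exactly one entry $a_i$ from each row $i$, with $p_{1,k-1}=a_1<a_2<\cdots<a_{\ell-1}$, which is precisely the third condition. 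The paper instead proves $\Rightarrow$ by contradiction: if the sequence stops at row $r<\ell-1$, then the right-extension by $t=p_{1,k-1}$ bumps only down to row $r+1$, leaving the shape with $k-1$ columns and $\ell-1$ rows, contradicting right-cruciality.
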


\begin{proof}
``$\Leftarrow$'' Assume that $(P,Q)$ satisfies the requirements. By Remark~\ref{longest-remark}, the longest increasing (resp., decreasing) subsequence in $\sigma$ is of length $k-1$ (resp., $\ell-1$). 

Let us extend $\sigma$ to the right by an element $t$. Suppose that $\sigma'$ is the resulting permutation corresponding to the pair of SYT $(P',Q')$. If $t>p_{1, k-1}$ then the extension of $\sigma$ by $t$ results in a new column added to $P$, and hence in an occurrence of $i_k$ in $\sigma'$. 

Assume that $t \leq p_{1,k-1}$ so that the element $p_{1,k-1}$ in $\sigma$ and $P$ becomes the element $(p_{1,k-1}+1)$ in $\sigma'$ and $P'$. By the row insertion operation of the RSK correspondence, we have that $t$ is inserted into the first row of $P$ and an element $p_{1,s_1}$ in $P$, $p_{1, s_1} < p_{1, k-1}$, will be moved to the second row of $P$. Since $p_{1,s_1}$ is smaller than the maximum element in the second row of $P$ (because $p_{1,s_1}$  is smaller than $p_{2,j_2}$), the insertion algorithm replaces an element $p_{2, s_2}$ in the second row of  $P$, $p_{2, s_2} < p_{2, j_2}$, by $p_{1,s_1}$. Continuing in this way, we see that an element $p_{\ell-2, s_{\ell-2}}$ replaces an element $p_{\ell-1, s_{\ell-1}}$, where $p_{\ell-1, s_{\ell-1}} < p_{\ell-1, j_{\ell-1}}$, and the element  $p_{\ell-1, s_{\ell-1}}$ goes to the $\ell$-th row in $P'$. Consequently, by Remark~\ref{longest-remark}, $\sigma'$ has an occurrence of $d_{\ell}$ showing that $\sigma$ is $(k,\ell)$-right-crucial. 

``$\Rightarrow$'' By the definition of a $(k,\ell)$-right-crucial permutation and Remark~\ref{longest-remark}, the requirements on the number of rows and columns are satisfied.

Assume that there is no increasing sequence $p_{1, k-1}, p_{2,j_2}, \ldots, p_{\ell-1, j_{\ell-1}}$ in $P$. Let us consider the longest increasing sequence $p_{1, k-1}, p_{2,j_2}, \ldots, p_{r, j_{r}}$ such that it is lexicographically smallest among all such longest increasing sequences starting at $p_{1, k-1}$.  By our assumption, $1\leq r < \ell-1$ and all elements in the $(r+1)$-th row of $P$ are smaller than $p_{r,j_r}$, where $j_1:=k-1$. In particular, since $P$ is an SYT, we have that the $(r+1)$-th row of $P$ is shorter than the $r$-th row.

Let us extend $\sigma$ to the right by the element $p_{1, k-1}$ so that the elements $p_{1, k-1}, p_{2,j_2}, \ldots, p_{r, j_{r}}$ in $\sigma$ and $P$ become, respectively, the elements $(p_{1, k-1}+1), (p_{2,j_2}+1), \ldots, (p_{r, j_{r}}+1)$ in $\sigma'$ and $P'$.   Note that the row insertion operation will move the elements $(p_{1, k-1}+1), (p_{2,j_2}+1), \ldots, (p_{r, j_{r}}+1)$ and add one  new element at the end of the  $(r+1)$-th row:
$$p'_{1, k-1} = p_{1, k-1};~ p'_{i, j_i} = p_{i-1, j_{i-1}} \mbox{ for }  i \in  \{2, \ldots, r \}; ~ p'_{r+1, s_{r} +1} = p_{r,j_r},$$
where $s_r$ is the length of the $r$-th row in $P$. We see that the lengths of the longest increasing and decreasing subsequences  in $\sigma$ and $\sigma'$ are the same contradicting the fact that $\sigma$ is a $(k,\ell)$-right-crucial permutation. Hence, $P$  contains an increasing sequence of elements $p_{1, k-1}, p_{2,j_2}, \ldots, p_{\ell-1, j_{\ell-1}}$ for some $j_2, \ldots, j_{\ell-1}$.
\end{proof}

To illustrate applications of Theorem~\ref{cruchar}, note that the SYT $P$ given by 

\begin{center}
\begin{tikzpicture}[scale=0.4]

\draw [line width=0.5](0,2)--(3,2);
\draw [line width=0.5](0,1)--(3,1);
\draw [line width=0.5](0,0)--(2,0);

\draw [line width=0.5](0,0)--(0,2);
\draw [line width=0.5](1,0)--(1,2);
\draw [line width=0.5](2,0)--(2,2);
\draw [line width=0.5](3,1)--(3,2);

\node [below] at (0.5,2.2){{\small 1}};
\node [below] at (1.5,2.2){{\small 2}};
\node [below] at (2.5,2.2){{\small 3}};

\node [below] at (0.5,1.2){{\small 4}};
\node [below] at (1.5,1.2){{\small 5}};

\end{tikzpicture}
\end{center}

\noindent
satisfies all conditions in the theorem when $k=4$ and $\ell=3$, and hence, since there are five choices for $Q$ in this case, we see that there are five $(4,3)$-crucial permutations with such $P$: 14523, 14253, 45123, 41253, 41523 (the remaining $(4,3)$-crucial permutations of length 5 are 13254, 13524, 21354, 21534, 23154, 23514, 25134, 31254, 31524, 35124).

As an immediate corollary to Theorem~\ref{crucial-growth-thm} and Corollary~\ref{min-crucial-count}, we have the following result.

\begin{cor}\label{k3-crucial-count} The number of $(k,3)$-right-crucial permutations of length $n=k$ is $k-1$, and for $n\geq k+1$ and $k\geq 3$, it is given by 
\begin{equation}\label{eqn-snk3}
s^{(c)}_n(k,3)=\frac{(2k-n)(2k-n-1)}{(n-k)(n-k+1)}{n-1 \choose k}{n\choose k}.
\end{equation}
Moreover, for $2\leq i\leq k-1$, the number of $(k,3)$-right-crucial permutations of $i$-th smallest length $k+i-1$ is given by
\begin{equation}\label{eqn-snk3i}
\frac{(k-i)(k-i+1)}{i(i-1)}{k+i-2 \choose k}{k+i-1\choose k}
\end{equation}
and this number is $0$ for $i\geq k$. By symmetry, replacing $k$ by $\ell$ in the formulas, we obtain the results for $(3,\ell)$-crucial permutations for $\ell\geq 3$.  
\end{cor}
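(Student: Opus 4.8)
The plan is to derive everything from the RSK characterization in Theorem~\ref{cruchar} specialized to $\ell=3$, together with the hook length formula for two-row shapes. For $\ell=3$, Theorem~\ref{cruchar} says $\sigma\in S_n$ is $(k,3)$-right-crucial if and only if its insertion tableau $P$ has exactly $k-1$ columns and exactly two rows -- so its shape is the two-row shape $\lambda=(k-1,\,n-k+1)$, which forces $k\le n\le 2k-2$ -- and $P$ contains an increasing two-term sequence $p_{1,k-1},p_{2,j_2}$, i.e.\ some entry of the second row exceeds $p_{1,k-1}$. Since the second row of an SYT increases from left to right, this last condition is equivalent to $p_{2,\,n-k+1}>p_{1,k-1}$, that is, the largest entry of row $2$ beats the largest entry of row $1$. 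The recording tableau $Q$ is unconstrained beyond having shape $\lambda$. Because RSK is a bijection and every admissible $P$ has the same shape $\lambda$, it follows that $s^{(c)}_n(k,3)=N(k,n)\cdot f^{\lambda}$, where $N(k,n)$ is the number of SYT of shape $\lambda$ satisfying the row condition and $f^{\lambda}$ is the total number of SYT of shape $\lambda$.

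First I would dispose of the base case $n=k$, where $\lambda=(k-1,1)$: the largest entry $n=k$ must sit in a removable corner, and if it were in cell $(1,k-1)$ the row condition fails, so $P$ is forced to be the tableau with first row $1\,2\cdots(k-1)$ and second row $k$, while $Q$ ranges over all $f^{(k-1,1)}=k-1$ SYT of that shape, giving $s^{(c)}_k(k,3)=k-1$. For $n\ge k+1$ I would compute $N(k,n)$ by noting that in $\lambda=(k-1,n-k+1)$ the entry $n$ lies in one of the two removable corners $(1,k-1)$ or $(2,n-k+1)$, and the condition $p_{2,n-k+1}>p_{1,k-1}$ holds precisely when $n$ occupies the corner $(2,n-k+1)$; deleting that corner gives $N(k,n)=f^{(k-1,n-k)}$. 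This also covers the rectangular case $n=2k-2$ (the Erd\H{o}s--Szekeres extremal permutations), where the corner $(2,k-1)$ is forced to carry $n$, so the condition is vacuous and $N(k,2k-2)=f^{(k-1,k-1)}=f^{(k-1,k-2)}$ by the same ``remove the maximal corner'' bijection. Applying the hook length formula in the form $f^{(a,b)}=\dfrac{(a+b)!\,(a-b+1)}{(a+1)!\,b!}$ to the shapes $(k-1,n-k+1)$ and $(k-1,n-k)$ and multiplying, a routine cancellation turns the product into $\dfrac{(2k-n)(2k-n-1)}{(n-k)(n-k+1)}\binom{n-1}{k}\binom{n}{k}$, which is~(\ref{eqn-snk3}). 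Formula~(\ref{eqn-snk3i}) is then the substitution $n=k+i-1$ (so $2k-n=k-i+1$, $2k-n-1=k-i$, $n-k=i-1$, $n-k+1=i$), valid for $2\le i\le k-1$; for $i=k$ the factor $k-i$ vanishes, and for $i>k$ the length $k+i-1$ exceeds the maximal length $2k-2$, so the count is $0$ in both cases.

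Finally, for the ``by symmetry'' assertion about $(3,\ell)$-right-crucial permutations, I would observe that the complement map $\sigma\mapsto c(\sigma)$ is an involution on $S_n$ that interchanges occurrences of $i_m$ and $d_m$ while preserving positions, and that it carries right extensions of $\sigma$ bijectively to right extensions of $c(\sigma)$; hence $\sigma$ is $(k,\ell)$-right-crucial if and only if $c(\sigma)$ is $(\ell,k)$-right-crucial, so $s^{(c)}_n(k,\ell)=s^{(c)}_n(\ell,k)$. Taking $k=3$ and renaming, the statements for $(3,\ell)$-right-crucial permutations follow from those proved above by replacing $k$ with $\ell$.

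The routine parts -- reading off the shape $\lambda$ from Theorem~\ref{cruchar} and the hook length bookkeeping -- are mechanical; the one place demanding a little care is translating the chain condition ``$P$ contains an increasing sequence $p_{1,k-1},p_{2,j_2}$'' into ``$n$ lies in the corner $(2,n-k+1)$'', and then checking that the rectangular shape $n=2k-2$ is handled uniformly via $f^{(k-1,k-1)}=f^{(k-1,k-2)}$. I expect no serious obstacle beyond this.
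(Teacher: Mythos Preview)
Your proposal is correct and follows essentially the same approach as the paper: specialize Theorem~\ref{cruchar} to $\ell=3$, observe that the chain condition forces $p_{2,n-k+1}=n$, count admissible $P$ via the hook length formula applied to the shape $(k-1,n-k)$ obtained by deleting that corner, count $Q$ via the hook length formula for $(k-1,n-k+1)$, multiply, and then substitute $n=k+i-1$. Your treatment is slightly more explicit than the paper's in two places---you argue the base case $n=k$ directly rather than invoking Corollary~\ref{min-crucial-count}, and you justify the ``by symmetry'' clause via the complement involution---but the core argument is the same.
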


\begin{proof} The fact that there are $k-1$ $(k,3)$-right-crucial permutations of length $k$ is obtained from Corollary~\ref{min-crucial-count} by letting $\ell=3$.  

Let $\sigma$ be a $(k,3)$-right-crucial permutation of length $n\geq k+1$ and the pair of SYT $(P,Q)$ corresponds to $\sigma$. Then, by Theorem~\ref{crucial-growth-thm}, the first row of $P$ is of length $k-1$, the second row of $P$ is of length $n-k+1$ and also $p_{2,n-k+1}=n$. We can now use the hook length formula to count the number of SYT with two rows obtained by removing the element  $p_{2,n-k+1}$ (any such SYT has two rows because  $n\geq k+1$; each such SYT gives a unique choice for $P$): 
\begin{equation}\label{SYT-P-formula-k3}
\frac{(2k-n)(n-1)!}{k!(n-k)!}=\frac{2k-n}{n-k}{n-1\choose k}.
\end{equation}
Also, by the hook length formula, the number of choices for $Q$ is given by 
$$\frac{(2k-n-1)n!}{k!(n-k+1)!}=\frac{2k-n-1}{n-k+1}{n\choose k}.$$
Multiplying the formulas above, we obtain (\ref{eqn-snk3}). 

To prove (\ref{eqn-snk3i}), first note that by Theorem~\ref{thm-min-crucial}, the minimal $(k,3)$-crucial permutations are of length $k$, and  by the Erd\H{o}s-Szekeres theorem \cite{ES1935} the maximal $(k,3)$-crucial permutations are of length $(k-1)(3-1)$ giving us the bounds for $i$ as the $i$-th smallest $(k,3)$-crucial permutations are of length $k+i-1$.  We now obtain (\ref{eqn-snk3i}) by inserting $n=k+i-1$ in (\ref{eqn-snk3}). \end{proof}

\begin{rem}\label{next-min-crucial-count} \normalfont Corollary~\ref{k3-crucial-count}  allows us to derive the following results. The number of $(k,3)$-right-crucial permutations of length $k+1$ (next smallest length, the case of $i=2$), is given by
$$\frac{(k-2)(k^2-1)}{2}.$$ The corresponding sequence $4,15, 36, 70, 120, 189, 280,\ldots$ is the sequence A077414 in \cite{oeis} with several combinatorial interpretations. 

The number of $(k,3)$-right-crucial permutations of length $k+2$ (the case of $i=3$), is given by
$$\frac{(k-3)(k^2-4)(k+1)^2}{12},$$
and the respective sequence begins with $0, 25, 126, 392, 960, 2025, 3850, 6776, \ldots$. \end{rem}

\begin{thm}\label{crucial-growth-thm} For $k+\ell-3\leq n < (k-1)(\ell-1)$, we have the following monotone property for the number of crucial permutations:
\begin{equation}\label{monot-cru}s^{(c)}_n(k,\ell)< s^{(c)}_{n+1}(k,\ell).\end{equation} 
\end{thm}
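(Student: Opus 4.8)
The plan is to use the RSK characterization of Theorem~\ref{cruchar} to build an explicit injection $\Phi$ from the $(k,\ell)$-right-crucial permutations of length $n$ into those of length $n+1$, and then to prove that $\Phi$ is not onto. (We may assume $k,\ell\ge 3$, since otherwise $k+\ell-3\ge(k-1)(\ell-1)$ and the range for $n$ is empty.)

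For a partition $\lambda$ with $\lambda_1=k-1$, exactly $\ell-1$ rows and $|\lambda|<(k-1)(\ell-1)$, let $b(\lambda)$ be the cell $(i^{*},\lambda_{i^{*}}+1)$ where $i^{*}\ge 2$ is smallest with $\lambda_{i^{*}}<k-1$; since $\lambda$ is not the full $(\ell-1)\times(k-1)$ rectangle, such $i^{*}$ exists, and one checks $2\le i^{*}\le\ell-1$, $\lambda_{i^{*}-1}=k-1>\lambda_{i^{*}}$ and $\lambda_{i^{*}}+1\le k-1$, so that adjoining $b(\lambda)$ to $\lambda$ gives a partition with first row $k-1$ and again exactly $\ell-1$ rows. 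Given a $(k,\ell)$-right-crucial $\sigma\in S_n$ with RSK pair $(P,Q)$ of shape $\lambda$, I define $\Phi(\sigma)\in S_{n+1}$ to be the permutation whose RSK pair is $(P',Q')$, where $P'$ and $Q'$ are obtained from $P$ and $Q$ by writing $n+1$ in the cell $b(\lambda)$ (legitimate, since $n+1$ exceeds every entry). By Theorem~\ref{cruchar}, $\Phi(\sigma)$ is again $(k,\ell)$-right-crucial: the shape of $P'$ still has $k-1$ columns and $\ell-1$ rows, and since no entry of $P$ was altered, the increasing chain $p_{1,k-1},p_{2,j_2},\dots,p_{\ell-1,j_{\ell-1}}$ present in $P$ survives verbatim in $P'$. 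Injectivity is clear: in $\Phi(\sigma)$ the entry $n+1$ occupies the \emph{same} cell of the two tableaux, so erasing it from both recovers $(P,Q)$, hence $\sigma$.

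The crux — and the step I expect to be the main obstacle — is to exhibit a $(k,\ell)$-right-crucial permutation of length $n+1$ outside the image of $\Phi$; note that every permutation in the image has $n+1$ in the same cell of both RSK tableaux. I would argue in two cases. If $n+1<(k-1)(\ell-1)$, pick a shape $\mu\vdash n+1$ with $\mu_1=k-1$ and $\ell-1$ rows; being smaller than the rectangle it has at least two distinct parts, hence at least two removable corners. Take a valid standard Young tableau $P^{*}$ of shape $\mu$ with $n+1$ in a removable corner not in the first row — for instance, fill the first row with $1,\dots,k-1$ and use the increasing first column below it to supply the required chain — and any $Q^{*}$ of shape $\mu$ with $n+1$ in a different removable corner; then $(P^{*},Q^{*})$ is a right-crucial permutation of length $n+1$ not in the image of $\Phi$. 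If $n+1=(k-1)(\ell-1)$, then $\mu$ must be the full rectangle, so I would instead produce a standard Young tableau $S$ of shape $(k-1)^{\ell-2}(k-2)$ that has no increasing chain from its $(1,k-1)$-entry down to row $\ell-1$ — e.g.\ the column-superstandard filling, whose bottom row consists of the multiples $\ell-1,2(\ell-1),\dots,(k-2)(\ell-1)$, all strictly below the $(1,k-1)$-entry $(k-2)(\ell-1)+1$, so that no increasing chain out of that cell can reach the last row. Adjoining $n+1$ in the unique outer corner $(\ell-1,k-1)$ turns $S$ into a rectangular tableau, which is automatically valid (use the chain down the last column); pairing it with an arbitrary recording tableau produces an extremal, hence right-crucial, permutation of length $n+1$ not in the image of $\Phi$, since deleting $n+1$ from its insertion tableau returns the non-valid $S$.

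Equivalently, one can bypass the permutation-level injection and write, using Theorem~\ref{cruchar} together with the branching rule $f^{\mu}=\sum_{\lambda\nearrow\mu}f^{\lambda}$,
\[
s^{(c)}_{n+1}(k,\ell)-s^{(c)}_{n}(k,\ell)=\sum_{\lambda\vdash n} f^{\lambda}\Bigl(\textstyle\sum_{\mu:\,\lambda\nearrow\mu} v(\mu)-v(\lambda)\Bigr),
\]
where $v(\lambda)$ counts the standard Young tableaux of shape $\lambda$ obeying the chain condition of Theorem~\ref{cruchar} (so $v(\lambda)>0$ exactly when $\lambda_1=k-1$ and $\lambda$ has $\ell-1$ rows); the box-adding map above shows each bracket is $\ge 0$, and the construction of an out-of-image tableau shows at least one bracket is $>0$. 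The delicate part remains the non-surjectivity, and within it the boundary case $n+1=(k-1)(\ell-1)$, where one must verify that not every standard Young tableau of the near-rectangular shape $(k-1)^{\ell-2}(k-2)$ satisfies the chain condition.
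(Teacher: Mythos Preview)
Your proof is correct and uses the same injection $\Phi$ as the paper (add $n+1$ in the topmost admissible corner of both $P$ and $Q$). The non-surjectivity argument, however, is organized differently and is somewhat cleaner than either of the paper's two versions.

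For the generic range the paper's ``alternative proof'' finds a shape with at least two outer corners and places $n+1$ in a \emph{non-top} corner of \emph{both} tableaux; it then needs separate ad hoc constructions for $n=k+\ell-3$ and for $n\in\{(k-1)(\ell-1)-2,(k-1)(\ell-1)-1\}$, while the paper's main proof splits instead at $n=(k-1)(\ell-2)$ and builds explicit block-structured tableaux in the upper range. Your device of putting $n+1$ in \emph{different} removable corners of $P^{*}$ and $Q^{*}$ handles every $n$ with $n+1<(k-1)(\ell-1)$ uniformly, since any non-rectangular shape with fixed first part $k-1$ and $\ell-1$ rows automatically has at least two distinct part sizes and hence at least two removable corners; this neatly absorbs the boundary cases the paper treats separately. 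For the single remaining case $n+1=(k-1)(\ell-1)$ you use the column-superstandard filling of $(k-1)^{\ell-2}(k-2)$, whose last row lies entirely below the $(1,k-1)$ entry, which is a more transparent witness than the paper's two-block construction in its case~(ii). The branching-rule reformulation at the end is a nice bonus that makes the nonnegativity of each summand immediate.
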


\begin{proof} Suppose that $\sigma\in S_n$ is a $(k,\ell)$-crucial permutation and $(P,Q)$ is the pair of SYT corresponding to $\sigma$. Then $P$ and $Q$ satisfy the conditions of Theorem~\ref{cruchar}. Moreover, since $n<(k-1)(\ell-1)$,  $P$ has an element $p_{i,j}$ such that $2\leq i\leq \ell-1$ , $1\leq j<k-1$ and there is no element to the right of $p_{i,j}$ (that is, $p_{i,j+1}$ does not exist). Out of all such $p_{i,j}$ we choose one with the minimum $i$. Let the SYT $P'$ (resp., $Q'$) be obtained from $P$ (resp., $Q$) by introducing the new element $p_{i,j+1}=n+1$ (resp., $q_{i,j+1}=n+1$), which we think of as filling in the corner next to $p_{i,j}$. Let the pair $(P',Q')$ correspond to a permutation $\sigma'$ of length $n+1$ via the RSK correspondence. Note that $P'$ satisfies the conditions of Theorem~\ref{cruchar}, so $\sigma'$ is a $(k,\ell)$-crucial permutation. Moreover, the operation of obtaining $\sigma'$ from $\sigma$ is clearly injective, and hence $s^{(c)}_n(k,\ell)\leq s^{(c)}_{n+1}(k,\ell)$. 

To prove that the inequality is actually strict, we consider two cases:
\begin{itemize}
\item[(i)] Let $k+\ell-3\leq n\leq (k-1)(\ell-2)$. In this case, there exist Young diagrams with $n+1$ squares, $k-1$ columns and $\ell-1$ rows with exactly one square in the last row. Let $P$ and $Q$ be SYT of the same shape with $n$ entries, $k-1$ columns and $\ell-2$ rows, and $P$ satisfies the last condition in Theorem~\ref{cruchar} (with the non-existing $p_{\ell-1,j_{\ell-1}}$ ignored), so that the pair  $(P,Q)$ corresponds to a permutation counted by $s^{(c)}_n(k,\ell-1)$.We can extend $P$ and $Q$ to SYT $P'$ and $Q'$, respectively, by adjoining the element $n+1$ in the row $\ell-1$, which is an injective operation. Clearly, $P'$  satisfies the last condition in Theorem~\ref{cruchar} and hence the permutation corresponding to the pair $(P',Q')$ is counted by $s^{(c)}_{n+1}(k,\ell)$. Since the element $n+1$ does not have another element to the left of it, the $(k,\ell)$-crucial permutations of length $n+1$ obtained in this way are distinct from the ones considered above. Hence, in this case \begin{equation}\label{casei-impr} s^{(c)}_n(k,\ell)<s^{(c)}_n(k,\ell)+s^{(c)}_n(k,\ell-1)\leq s^{(c)}_{n+1}(k,\ell).\end{equation}
\item[(ii)] Let $(k-1)(\ell-2)<n<(k-1)(\ell-1)$. In this case, we can consider Young diagrams having $k-1$ squares in each of the first $\ell-2$ rows and $d:=n-(k-1)(\ell-2)\geq 1$ squares in the last row. We define a SYT $P$ by filling in such a Young diagram with elements as follows:
\begin{itemize}
\item[(a)] the entries $p_{i,j}$ for $1\leq i\leq\ell-1$ and $1\leq j\leq d$ are in $\{1,\ldots, (\ell-1)d\}$ and they form a SYT. For example, reading from left to right, we can let row 1 be $1, 2,\ldots, d$, row 2 be $d+1,d+2,\ldots,2d$, etc.  
\item[(b)] the remaining entries $p_{i,j}$ for $1\leq i\leq\ell-2$ and $d+1\leq j\leq k-1$ also form a SYT, which will automatically satisfy the last condition in Theorem~\ref{cruchar} because of the last column (with the non-existing $p_{\ell-1,j_{\ell-1}}$ ignored). An example of an arrangement here can be, again, placing the elements consecutively in increasing order from left to right from top to bottom with the element $(\ell-1)d+1$ being in the North-West corner. 
\end{itemize} 
We note that because row $\ell-1$ has relatively ``small'' elements, the constructed by us $P$ does not satisfy the last condition in Theorem~\ref{cruchar}. However, adjoining to $P$ the element $p_{\ell-1,d+1}=n+1$ gives a SYT $P'$ satisfying all conditions in Theorem~\ref{cruchar}. Hence, choosing any SYT $Q'$ with $n+1$ elements of the shape of $P'$ we obtain a pair of SYT $(P',Q')$ corresponding to a $(k,\ell)$-crucial permutation of length $n+1$ that was not considered above (including case (i)), because the element $n+1$ has another element to the left of it, and removing $n+1$ from $P'$ does not give a SYT satisfying the last condition in Theorem~\ref{cruchar}. Hence, in this case we also have (\ref{monot-cru}).
\end{itemize}
\noindent 
{\bf An alternative proof of strictness of $s^{(c)}_n(k,\ell)\leq s^{(c)}_{n+1}(k,\ell)$.} Note that for $k+\ell-3<n<(k-1)(\ell-1)-2$ we can always construct a SYT $P$ with at least two corners satisfying the conditions in Theorem~\ref{cruchar} (e.g. by placing the elements consecutively in increasing order from left to right from top to bottom in an appropriate Young diagram). That means that we can extend $P$ in more than one way by placing $n+1$ in another corner (not just in the top one as is done above), and such an extension is clearly injective. In the remaining cases we have exactly one corner. If $n\in\{(k-1)(\ell-1)-2,(k-1)(\ell-1)-1\}$ then we can apply the construction in case (ii) above to get an extra $(k,\ell)$-crucial permutation of length $n+1$. Finally, if $n=k+\ell-3$ then note that $p_{\ell-1,1}=n$. We can then extend $P$ to $P'$ by letting the corner $p'_{2,2}=n$, the bottommost element $p'_{\ell-1,1}=n+1$, and keeping all other elements of $P$ the same. Such a $P'$ clearly satisfies the conditions in Theorem~\ref{cruchar} and it was not obtained previously by filling in a corner element (as $n+1$ has no element to the left of it in $P'$).
\end{proof}

\begin{rem} \normalfont There is plenty of room to strengthen (\ref{monot-cru}), for example, by using the proof of Theorem~\ref{crucial-growth-thm}. In particular, (\ref{casei-impr}) states that $s^{(c)}_n(k,\ell)+s^{(c)}_n(k,\ell-1)\leq s^{(c)}_{n+1}(k,\ell)$ for $k+\ell-3\leq n\leq (k-1)(\ell-2)$. As for the case of $(k-1)(\ell-2)<n<(k-1)(\ell-1)$, we can prove that 
\begin{equation}\label{non-accurate-estimate}s^{(c)}_n(k,\ell)+I(k,\ell-1)\sum_{d=1}^{k-2}I(d+1,\ell)I(k-d,\ell-1)\leq s^{(c)}_{n+1}(k,\ell)\end{equation}
where $I(n,k)$ is given by the square root of the formula (\ref{Stanley-formula}), that is, $I(n,k)$ is the number of SYT of the $(\ell-1)\times (k-1)$ rectangular shape. Indeed, $I(d+1,\ell)$ (resp., $I(k-d,\ell-1)$) is the number of possibilities to choose a SYT in case (ii)(a) (resp., (ii)(b)) in the proof of Theorem~\ref{crucial-growth-thm} (independently from each other), which gives an estimation for the number of possible $P$ after summing over all $d$. Moreover, for each such a choice of $P$ we can make several choices for $Q$ depending on the shape of $P$. All these choices can be estimated from below by dropping the last row and considering $I(k,\ell-1)$ $(\ell-2)\times k-1$ rectangular SYT\footnote{We can produce an explicit formula for the number of $Q$ for any fixed $d$ hence introducing an extra factor under the sum in (\ref{non-accurate-estimate}) and improving the inequality. However, this is not done for the sake of keeping simplicity.}, which gives (\ref{non-accurate-estimate}). Finally, we note that while our estimates are generally much better than $s^{(c)}_n(k,\ell)+1\leq s^{(c)}_{n+1}(k,\ell)$, they are still far from being accurate. For example, our inequalities imply $15=s^{(c)}_5(4,3)\geq 6$, $64=s^{(c)}_6(4,4)\geq 21$ and $378=s^{(c)}_7(4,4)\geq 89$, and they become much worse with growing $k$ and $\ell$. \end{rem}

In this paper, we use the following properties of the RSK correspondence, where $A^T$ is the transpose of a SYT $A$, and recall that $\epsilon$ is the evacuation map.
\begin{lemma}[\cite{Lee, Sagan}]\label{RSK-properties}
Let $\sigma\in S_n$ and $(P,Q)$ be the pair of SYT corresponding to $\sigma$.  Then,
\begin{itemize}
\item $(P^T, \epsilon (Q)^T)$ is the pair of SYT corresponding to the reverse $r(\sigma)$.
\item $(Q,P)$ is the pair of SYT corresponding to the inverse $\sigma^{-1}$.
\item $(\epsilon(P)^T, Q^T)$ is the pair of SYT corresponding to the complement $c(\sigma)$.
\end{itemize}
\end{lemma}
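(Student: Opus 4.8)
All three assertions are classical facts about the RSK correspondence (due to Schützenberger and to Knuth), and complete proofs can be found in \cite{Lee, Sagan}; below I describe the route I would take. The middle bullet is the RSK symmetry theorem, and I would prove it first; the first bullet combines the effect of reversal on monotone subsequences with Schützenberger's evacuation theorem, and I would do it second; the last bullet is then a purely formal consequence of the first two.

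For the \emph{inverse} statement I would use a geometric model of RSK, namely Viennot's ``shadow line'' construction (or, equivalently, the matrix-ball construction). Plot $\sigma$ as the points $\{(i,\sigma_i) : 1 \le i \le n\}$ in the plane. The first family of shadow lines determines the first row of $P$ (from the $y$-coordinates of its cornermost points) and the first row of $Q$ (from the $x$-coordinates), and one recurses on the remaining points; an induction on $n$ checks that the tableaux so produced are the ones built by the recursive row-insertion definition. Reflecting the point set in the main diagonal, $(i,\sigma_i)\mapsto(\sigma_i,i)$, is exactly the passage $\sigma\mapsto\sigma^{-1}$, and it interchanges the two coordinate axes, hence interchanges the roles of $P$ and $Q$; so $\sigma^{-1}$ corresponds to $(Q,P)$.

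For the \emph{reverse} statement I would start from the observation that $r$ sends every occurrence of $i_m$ in $\sigma$ to an occurrence of $d_m$ in $r(\sigma)$ and conversely; by Greene's theorem the shape of $P(r(\sigma))$ is therefore the conjugate of the shape of $P(\sigma)$. Upgrading this to the identity $P(r(\sigma)) = P(\sigma)^T$ amounts to comparing Schensted's row insertion with column insertion and noting that reversing the order in which the letters are inserted converts one into the other; I would cite \cite{Sagan} for this classical identity. The genuinely delicate point — the one I expect to be the main obstacle, or simply to quote — is the recording tableau: reversing the insertion order is exactly the setting that Schützenberger's \emph{evacuation} map governs, so that tracking the sequence of corners at which new boxes are created when $\sigma$ is read backwards, and comparing it with that for $\sigma$, reproduces precisely the ``replace each entry $j$ by $n+1-j$, rotate by $180^{\circ}$, rectify by jeu-de-taquin'' procedure recalled above, applied after transposing since the shape has been transposed. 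As evacuation commutes with transposition of tableaux ($\epsilon(Q^T)=\epsilon(Q)^T$), this gives $Q(r(\sigma))=\epsilon(Q)^T$, that is, the pair $(P^T,\epsilon(Q)^T)$.

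For the \emph{complement} statement I would avoid any new insertion analysis and argue formally. In terms of permutation matrices, $\sigma\mapsto\sigma^{-1}$ is transposition, $\sigma\mapsto r(\sigma)$ is reversal of the columns, and $\sigma\mapsto c(\sigma)$ is reversal of the rows; since reversing the rows equals transpose $\circ$ (reverse columns) $\circ$ transpose, one obtains the identity $c(\sigma)=\bigl(r(\sigma^{-1})\bigr)^{-1}$ (equally easily checked directly in one-line notation). Now chain the two bullets already proved: $\sigma\leftrightarrow(P,Q)$ gives $\sigma^{-1}\leftrightarrow(Q,P)$, then $r(\sigma^{-1})\leftrightarrow(Q^T,\epsilon(P)^T)$, and taking inverses once more, $c(\sigma)=\bigl(r(\sigma^{-1})\bigr)^{-1}\leftrightarrow(\epsilon(P)^T,Q^T)$, which is the claim.
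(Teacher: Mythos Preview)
Your sketch is sound: the Viennot/matrix-ball argument for the symmetry $(P,Q)\mapsto(Q,P)$, the row/column-insertion duality plus Sch\"utzenberger's evacuation theorem for reversal, and the clean derivation of the complement case via the identity $c(\sigma)=(r(\sigma^{-1}))^{-1}$ all check out (I verified the last identity and the resulting chain $(P,Q)\to(Q,P)\to(Q^T,\epsilon(P)^T)\to(\epsilon(P)^T,Q^T)$ directly).

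However, there is nothing to compare against: the paper does \emph{not} prove this lemma. It is stated as a quotation of known results, with the citation \cite{Lee,Sagan} attached to the lemma header, and is used as a black box in the proof of Theorem~\ref{left-top-bottom}. So your proposal goes well beyond what the paper itself supplies; it is effectively a proof outline for the cited references rather than a reconstruction of anything in the paper.
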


Next, we characterize $(k,\ell)$-left-, top- and bottom-crucial permutations.

\begin{thm} \label{left-top-bottom}
Let $\sigma\in S_n$ and $(P,Q)$ be the pair of SYT corresponding to $\sigma$. Then,  $\sigma$ is $(k,\ell)$-left-, top- or bottom-crucial if and only if the following holds:
\begin{itemize}

\item the number of columns in $P$ (and $Q$) is $k-1$;
\item the number of rows in $P$ (and $Q$) is $\ell-1$;
\item {\bf for $(k,\ell)$-left-crucial:} $P$  contains an increasing sequence of elements $p_{\ell-1, 1}$,  $p_{i_2,2},$ \ldots, $p_{i_{k-1}, k-1}$ for some $i_2, \ldots, i_{k-1}$;

{\bf for $(k,\ell)$-top-crucial:}  $Q$  contains an increasing sequence of elements $q_{1, k-1}$, $q_{2,j_2}$, \ldots, $q_{\ell-1, j_{\ell-1}}$ for some $j_2, \ldots, j_{\ell-1}$;

{\bf for $(k,\ell)$-bottom-crucial:} $Q$  contains an increasing sequence of elements $q_{\ell-1, 1}$,  $q_{i_2,2},$ \ldots, $q_{i_{k-1}, k-1}$ for some $i_2, \ldots, i_{k-1}$.
\end{itemize}
\end{thm}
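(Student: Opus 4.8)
The plan is to derive all three characterizations from the $(k,\ell)$-right-crucial case (Theorem~\ref{cruchar}) by feeding the symmetry dictionary of Lemma~\ref{crucial-types} through the RSK transformation rules of Lemma~\ref{RSK-properties}. First I would record the reductions obtained from Lemma~\ref{crucial-types} together with the fact that $r$, $c$ and inversion are bijections (indeed involutions) on $S_n$: namely, $\sigma$ is $(k,\ell)$-left-crucial iff $r(\sigma)$ is $(\ell,k)$-right-crucial; $\sigma$ is $(k,\ell)$-top-crucial iff $\sigma^{-1}$ is $(k,\ell)$-right-crucial; and $\sigma$ is $(k,\ell)$-bottom-crucial iff $(c(\sigma))^{-1}$ is $(\ell,k)$-right-crucial, the last one coming from the fourth bullet of Lemma~\ref{crucial-types} with $k$ and $\ell$ interchanged upon solving $\sigma = c(\mu^{-1})$ for $\mu$. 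For each of the three types I would then apply Theorem~\ref{cruchar} to the transformed permutation and translate its three conditions back to conditions on the pair $(P,Q)$ of $\sigma$ via Lemma~\ref{RSK-properties}.

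The top-crucial case is essentially immediate: the pair of SYT corresponding to $\sigma^{-1}$ is $(Q,P)$, so Theorem~\ref{cruchar} says $\sigma^{-1}$ is $(k,\ell)$-right-crucial exactly when $Q$ has $k-1$ columns, $Q$ has $\ell-1$ rows, and $Q$ contains an increasing sequence $q_{1,k-1},q_{2,j_2},\ldots,q_{\ell-1,j_{\ell-1}}$; since $P$ and $Q$ have the same shape, these are verbatim the conditions in the statement. For the left-crucial case, the pair for $r(\sigma)$ is $(P^T,\epsilon(Q)^T)$, so the insertion tableau of $r(\sigma)$ is $P^T$, and Theorem~\ref{cruchar} (with $k,\ell$ swapped) requires that $P^T$ have $\ell-1$ columns, $k-1$ rows, and contain an increasing sequence in positions $(1,\ell-1),(2,j_2),\ldots,(k-1,j_{k-1})$. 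Since the transposition $p^T_{i,j}=p_{j,i}$ preserves entries, this becomes exactly: $P$ has $k-1$ columns, $\ell-1$ rows, and contains an increasing sequence $p_{\ell-1,1},p_{j_2,2},\ldots,p_{j_{k-1},k-1}$, which is the asserted condition after renaming $j_m$ to $i_m$. The bottom-crucial case runs identically: the pair for $c(\sigma)$ is $(\epsilon(P)^T,Q^T)$, hence the pair for $(c(\sigma))^{-1}$ is $(Q^T,\epsilon(P)^T)$, whose insertion tableau is $Q^T$; applying Theorem~\ref{cruchar} to $Q^T$ and transposing back yields precisely the stated condition on $Q$.

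The point I expect to require the most care — though it is not a genuine obstacle — is the bookkeeping: keeping the interchange $k\leftrightarrow\ell$ and the row/column roles straight across the transpositions, and observing that the evacuation map $\epsilon$, which appears in the reverse and complement transformations, is applied only to the \emph{recording} tableau ($\epsilon(Q)^T$ for $r(\sigma)$, and $\epsilon(P)^T$ inside the pair before inversion in the bottom case) and never to the insertion tableau. Since Theorem~\ref{cruchar} constrains only the insertion tableau together with the common shape, and $\epsilon$ changes neither the shape nor, a fortiori, its numbers of rows and columns, the evacuation never enters any of the resulting conditions. Once this is noted, each direction of each equivalence follows because all the reductions above are ``if and only if'' statements between bijective images, so there is nothing to prove beyond the clerical translation just described.
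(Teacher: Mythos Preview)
Your proposal is correct and follows essentially the same route as the paper: reduce each case via Lemma~\ref{crucial-types} to a right-crucial statement about $r(\sigma)$, $\sigma^{-1}$, or $(c(\sigma))^{-1}$, read off the corresponding RSK pair from Lemma~\ref{RSK-properties}, and apply Theorem~\ref{cruchar} to the insertion tableau. Your additional remark that evacuation lands only on the recording side and is therefore invisible to the conditions of Theorem~\ref{cruchar} makes explicit a point the paper leaves implicit, but the argument is otherwise identical.
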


\begin{proof}
Remark~\ref{longest-remark} gives conditions on the number of rows and columns in $P$ and $Q$.

By Lemma~\ref{crucial-types}, $\sigma$ is $(k,\ell)$-left-crucial if and only if $r(\sigma)$ is  $(\ell, k)$-right-crucial. By Lemma~\ref{RSK-properties}, $(P^T, \epsilon (Q)^T)$ is the pair of SYT corresponding to  $r(\sigma)$. Hence, Theorem~\ref{cruchar} gives the required characterization of $(k,\ell)$-left-crucial permutations. 

By Lemma~\ref{crucial-types}, $\sigma$ is $(k,\ell)$-top-crucial if and only if $\sigma^{-1}$ is  $(k, \ell)$-right-crucial. By Lemma~\ref{RSK-properties}, $(Q,T)$ is the pair of SYT corresponding to  $\sigma^{-1}$. Hence, Theorem~\ref{cruchar} gives the required characterization of $(k,\ell)$-top-crucial permutations. 

By Lemma~\ref{crucial-types}, $\sigma$ is $(k,\ell)$-bottom-crucial if and only if $(c(\sigma))^{-1}$ is  $(\ell, k)$-right-crucial. By Lemma~\ref{RSK-properties}, $(Q^T,\epsilon(P)^T)$  is the pair of SYT corresponding to  $(c(\sigma))^{-1}$. Hence, Theorem~\ref{cruchar} gives the required characterization of $(k,\ell)$-bottom-crucial permutations. 
\end{proof}

\section{Characterization of $(k,\ell)$-bicrucial permutations via RSK correspondence}\label{RSK-characterization-bicrucial}

The following theorem is an immediate corollary of Theorems~\ref{cruchar} and \ref{left-top-bottom} and the fact that the set of $(k,\ell)$-bicrucial permutations is the intersection of the sets of $(k,\ell)$-right-crucial and $(k,\ell)$-left-crucial permutations. 

\begin{thm}  \label{bicruchar}
Let $\sigma\in S_n$ and $(P,Q)$ be the pair of SYT corresponding to $\sigma$. Then,  $\sigma$ is $(k,\ell)$-bicrucial if and only if the following holds:
\begin{itemize}
\item the number of columns in $P$ (and $Q$) is $k-1$;
\item the number of rows in $P$ (and $Q$) is $\ell-1$;
\item $P$  contains an increasing sequence of elements $p_{1, k-1}, p_{2,j_2}, \ldots, p_{\ell-1, j_{\ell-1}}$ for some $j_2, \ldots, j_{\ell-1}$;
\item $P$  contains an increasing sequence of elements $p_{\ell-1, 1}, p_{i_2,2}, \ldots, p_{i_{k-1}, k-1}$ for some $i_2, \ldots, i_{k-1}$.
\end{itemize}
\end{thm}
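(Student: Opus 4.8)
The plan is to exploit the fact, recorded in Section~\ref{5-types-of-crucial-perms-sec}, that a permutation is $(k,\ell)$-bicrucial precisely when it is simultaneously $(k,\ell)$-right-crucial and $(k,\ell)$-left-crucial. Consequently, $\sigma$ is $(k,\ell)$-bicrucial if and only if the pair $(P,Q)$ associated to $\sigma$ satisfies both the characterization of right-crucial permutations given by Theorem~\ref{cruchar} and the characterization of left-crucial permutations given by the corresponding clause of Theorem~\ref{left-top-bottom}. So the argument reduces to intersecting the two condition lists.

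First I would lay the two lists side by side. Theorem~\ref{cruchar} requires that $P$ (and $Q$) have exactly $k-1$ columns and $\ell-1$ rows, and that $P$ contain an increasing sequence $p_{1,k-1}, p_{2,j_2},\ldots,p_{\ell-1,j_{\ell-1}}$. The left-crucial clause of Theorem~\ref{left-top-bottom} imposes the \emph{same} shape constraints on $P$ (and $Q$), namely $k-1$ columns and $\ell-1$ rows, together with the existence of an increasing sequence $p_{\ell-1,1}, p_{i_2,2},\ldots,p_{i_{k-1},k-1}$ in $P$. The key point to note is that both characterizations are phrased purely in terms of $P$ and that their shape requirements are literally identical.

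It then remains only to observe that the conjunction of these two sets of conditions collapses to exactly the four bullet points in the statement of Theorem~\ref{bicruchar}: the shape conditions on the number of rows and columns are listed once, and the two increasing-sequence conditions on $P$ — one starting from the top-right entry and one from the bottom-left entry — are both retained. Since $\sigma\leftrightarrow(P,Q)$ is a bijection, this equivalence is equivalent to the asserted equivalence for $\sigma$, which completes the proof.

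There is essentially no obstacle here, which is why the theorem is billed as an immediate corollary; the single point deserving a moment's care is to confirm that the left-crucial clause of Theorem~\ref{left-top-bottom}, although derived by passing through $r(\sigma)$ and the identity $(P^T,\epsilon(Q)^T)$ of Lemma~\ref{RSK-properties}, ultimately delivers its condition in terms of $P$ itself rather than $P^T$ or $\epsilon(Q)^T$. Granting that — which is exactly how Theorem~\ref{left-top-bottom} is stated — the intersection is taken without any transposition or evacuation bookkeeping, and the result follows.
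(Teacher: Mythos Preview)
Your proposal is correct and matches the paper's approach exactly: the paper states the theorem as an immediate corollary of Theorems~\ref{cruchar} and~\ref{left-top-bottom} together with the observation that $(k,\ell)$-bicrucial is the intersection of $(k,\ell)$-right-crucial and $(k,\ell)$-left-crucial. Your additional remark about verifying that the left-crucial condition is ultimately phrased in terms of $P$ (not $P^T$ or $\epsilon(Q)^T$) is a nice sanity check, but the paper simply takes this for granted from the statement of Theorem~\ref{left-top-bottom}.
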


Our next theorem characterizes minimal $(k,\ell)$-bicrucial permutations, and it is the main tool for enumeration of these permutations in Corollary~\ref{min-bicrucial-count-cor}.

\begin{thm}\label{min-bicrucial-char-thm}
For $k > \ell\geq 3$,  any minimal $(k,\ell)$-bicrucial permutation $\sigma$ is of length $n = k + 2\ell -5$. Moreover, if $(P,Q)$ is a pair of SYT corresponding to $\sigma$, then $P$ is given by 

\begin{center}
\begin{tikzpicture}[scale=0.4]

\draw [line width=0.5](0,5)--(7.3,5);
\draw [line width=0.5](8.7,5)--(12,5);
\draw [line width=0.5](0,4)--(7.3,4);
\draw [line width=0.5](8.7,4)--(12,4);
\draw [line width=0.5](0,3)--(5,3);
\draw [line width=0.5](0,1)--(5,1);
\draw [line width=0.5](0,0)--(5,0);

\draw [line width=0.5](0,2.7)--(0,5);
\draw [line width=0.5](0,0)--(0,1.3);
\draw [line width=0.5](1.6,2.7)--(1.6,5);
\draw [line width=0.5](1.6,0)--(1.6,1.3);
\draw [line width=0.5](5,0)--(5,1.3);
\draw [line width=0.5](5,2.7)--(5,5);
\draw [line width=0.5](7,4)--(7,5);
\draw [line width=0.5](9,4)--(9,5);
\draw [line width=0.5](12,4)--(12,5);

\node [below] at (0.8,5.1){{\tiny $1$}};
\node [below] at (3,5.1){{\tiny $\ell$}};
\node [below] at (6,5.1){{\tiny $\ell+1$}};
\node [below] at (8,5){{\tiny $\cdots$}};
\node [below] at (10.5,5.1){{\tiny $k+\ell-3$}};
\node [below] at (3.1,4.1){{\tiny $k+\ell-2$}};
\node [below] at (0.8,4.1){{\tiny $2$}};
\node [below] at (0.8,3.3){{\tiny $\vdots$}};
\node [below] at (3.2,3.3){{\tiny $\vdots$}};
\node [below] at (0.8,1.1){{\tiny $\ell-1$}};
\node [below] at (3.3,1.1){{\tiny $k+2\ell-5$}};

\end{tikzpicture}
\end{center}

\noindent
For $k=\ell$,  any minimal $(k,k)$-bicrucial permutation $\sigma$ is of length $n = 3k -5$. Moreover, if $(P,Q)$ is a pair of SYT corresponding to $\sigma$, then $P$, or $P^T$, is given by 

\begin{center}
\begin{tikzpicture}[scale=0.4]

\draw [line width=0.5](0,5)--(7.3,5);
\draw [line width=0.5](8.7,5)--(12,5);
\draw [line width=0.5](0,4)--(7.3,4);
\draw [line width=0.5](8.7,4)--(12,4);
\draw [line width=0.5](0,3)--(5,3);
\draw [line width=0.5](0,1)--(5,1);
\draw [line width=0.5](0,0)--(5,0);

\draw [line width=0.5](0,2.7)--(0,5);
\draw [line width=0.5](0,0)--(0,1.3);
\draw [line width=0.5](1.6,2.7)--(1.6,5);
\draw [line width=0.5](1.6,0)--(1.6,1.3);
\draw [line width=0.5](5,0)--(5,1.3);
\draw [line width=0.5](5,2.7)--(5,5);
\draw [line width=0.5](7,4)--(7,5);
\draw [line width=0.5](9,4)--(9,5);
\draw [line width=0.5](12,4)--(12,5);

\node [below] at (0.8,5.1){{\tiny $1$}};
\node [below] at (3,5.1){{\tiny $k$}};
\node [below] at (6,5.1){{\tiny $k+1$}};
\node [below] at (8,5){{\tiny $\cdots$}};
\node [below] at (10.5,5.1){{\tiny $2k-3$}};
\node [below] at (3.1,4.1){{\tiny $2k-2$}};
\node [below] at (0.8,4.1){{\tiny $2$}};
\node [below] at (0.8,3.3){{\tiny $\vdots$}};
\node [below] at (3.2,3.3){{\tiny $\vdots$}};
\node [below] at (0.8,1.1){{\tiny $k-1$}};
\node [below] at (3.3,1.1){{\tiny $3k-5$}};

\end{tikzpicture}
\end{center}

\end{thm}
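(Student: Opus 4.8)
The plan is to pin down $P$ directly through the characterisation in Theorem~\ref{bicruchar}, never running the RSK insertion on $\sigma$ itself. By that theorem, $\sigma$ is $(k,\ell)$-bicrucial exactly when $P$ has $k-1$ columns and $\ell-1$ rows and contains both a ``first chain'' $p_{1,k-1}<p_{2,j_2}<\cdots<p_{\ell-1,j_{\ell-1}}$ (one cell per row, anchored at the rightmost cell of the top row) and a ``second chain'' $p_{\ell-1,1}<p_{i_2,2}<\cdots<p_{i_{k-1},k-1}$ (one cell per column, anchored at the bottom cell of the first column), with $Q$ free to be any SYT of the shape of $P$. Since a minimal $(k,\ell)$-bicrucial permutation is simply a $(k,\ell)$-bicrucial permutation of shortest possible length, it is enough to (a) find which shapes $\lambda$ with $\lambda_1=k-1$ and exactly $\ell-1$ parts admit such a $P$, (b) check that $|\lambda|$ is minimised at $k+2\ell-5$ (consistent with Theorem~\ref{thm-min-bicrucial-structure}), and (c) show the filling of the minimising shape is forced.

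For (a) I would argue about $\lambda_{\ell-1}$. If $\lambda_{\ell-1}=1$, the sole cell of row $\ell-1$ is $(\ell-1,1)$, so the first chain must terminate there; but the second chain starts there, and if column $k-1$ had only one cell the second chain would be forced to terminate at $(1,k-1)$, where the first chain starts --- giving $p_{1,k-1}<p_{\ell-1,1}$ and $p_{\ell-1,1}<p_{1,k-1}$ at once, impossible. Hence $\lambda_2=k-1$, whence $|\lambda|\ge 2(k-1)+(\ell-3)=2k+\ell-5\ge k+2\ell-5$ since $k\ge\ell$, with equality only when $k=\ell$ and $\lambda=(k-1,k-1,1,\dots,1)$. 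If instead $\lambda_{\ell-1}\ge 2$, then rows $2,\dots,\ell-1$ each have at least two cells, so $|\lambda|\ge (k-1)+2(\ell-2)=k+2\ell-5$, with equality only for $\lambda=(k-1,2,2,\dots,2)$. Thus for a minimal bicrucial $\sigma$ (length $k+2\ell-5$, so $|\lambda|=k+2\ell-5$) the shape of $P$ is $(k-1,2,\dots,2)$ when $k>\ell$, and is $(k-1,2,\dots,2)$ or its transpose $(k-1,k-1,1,\dots,1)$ when $k=\ell$.

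Next, for (c), suppose $P$ has shape $(k-1,2,\dots,2)$. Because rows $2,\dots,\ell-1$ contain only columns $1$ and $2$, the first chain exists iff $p_{2,2}>p_{1,k-1}$, i.e.\ iff the rightmost cell of row $2$ exceeds all of row $1$; in particular the whole of column $2$ below its top cell then exceeds all of row $1$. Because columns $3,\dots,k-1$ each consist of a single (row-$1$) cell, the second chain is $p_{\ell-1,1}<p_{i_2,2}<p_{1,3}<\cdots<p_{1,k-1}$, whose tail past $p_{i_2,2}$ is automatically increasing, so it exists iff some column-$2$ entry lies strictly between $p_{\ell-1,1}$ and $p_{1,3}$; by the previous sentence only $p_{1,2}$ can, so the condition collapses to $p_{\ell-1,1}<p_{1,2}$, i.e.\ all of column $1$ lies below $p_{1,2}$. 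Counting values now forces $p_{1,2}=\ell$, column $1$ to read $1,2,\dots,\ell-1$ downwards, the $k-3$ smallest of the remaining values $\ell+1,\dots,k+2\ell-5$ to fill $p_{1,3},\dots,p_{1,k-1}$ from left to right, and the $\ell-2$ largest to fill $p_{2,2},\dots,p_{\ell-1,2}$ from top to bottom --- precisely the displayed tableau, which one verifies does realise both chains. When $k=\ell$ and $P$ has the transpose shape, I would instead apply Lemma~\ref{RSK-properties}: the reverse $r(\sigma)$ is again $(k,k)$-bicrucial of the same minimal length and its insertion tableau is $P^T$, of shape $(k-1,2,\dots,2)$, so the argument just given identifies $P^T$ with the displayed tableau.

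The step I expect to be the crux is (a): one has to notice that the degenerate shape $(k-1,k-1,1,\dots,1)$ enters precisely in the balanced case $k=\ell$, and to run the contradiction so that it simultaneously produces the length lower bound $k+2\ell-5$ and the complete list of extremal shapes. After that, (c) is forced bookkeeping steered by the two inequalities $p_{2,2}>p_{1,k-1}$ and $p_{\ell-1,1}<p_{1,2}$. One could alternatively take the explicit one-line form of $\sigma$ from Theorem~\ref{thm-min-bicrucial-structure} and compute $P(\sigma)$ by hand, but tracking the bumping through a shuffle of two decreasing runs and one increasing run is messier than the chain argument above.
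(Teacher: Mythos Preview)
Your argument is correct and takes a genuinely different route from the paper's. Both proofs start from the characterisation in Theorem~\ref{bicruchar}, but the paper reasons about the \emph{entries} $p_{\ell-1,1}$ and $p_{1,k-1}$: it fixes each of these at its minimum possible value ($\ell-1$ and $k-1$ respectively), reads off the smallest increasing sequences compatible with that choice, and compares the two resulting sizes $k+2\ell-5$ and $2k+\ell-5$. You instead argue about the \emph{shape} $\lambda$ first, splitting on whether $\lambda_{\ell-1}=1$ or $\lambda_{\ell-1}\ge 2$, and extract the two candidate shapes $(k-1,k-1,1,\dots,1)$ and $(k-1,2,\dots,2)$ directly from the incompatibility of the two chain endpoints; only afterwards do you pin down the filling via the two inequalities $p_{2,2}>p_{1,k-1}$ and $p_{\ell-1,1}<p_{1,2}$.

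What each buys: the paper's entry-based argument is shorter to state but leaves implicit why it suffices to examine only those two extremal starting values. Your shape-first argument makes that step explicit (the contradiction when both $\lambda_{\ell-1}=1$ and $\lambda_2<k-1$ is the missing justification), and the counting that forces $p_{1,2}=\ell$ and $p_{1,k-1}=k+\ell-3$ is cleaner than tracing the sequences by hand. Your use of Lemma~\ref{RSK-properties} to reduce the transpose shape in the $k=\ell$ case to the already-treated one is also a nice shortcut that the paper does not take; the paper instead redoes the sequence analysis with the roles of rows and columns interchanged.
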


\begin{proof}
By Theorem~\ref{bicruchar}, $P$ has $k-1$ columns and $\ell-1$ rows. Moreover, $P$  contains an increasing sequence of elements $p_{1, k-1}, p_{2,j_2}, \ldots, p_{\ell-1, j_{\ell-1}}$ for some $j_2, \ldots, j_{\ell-1}$ and an increasing sequence of elements $p_{\ell-1, 1}, p_{i_2,2}, \ldots, p_{i_{k-1}, k-1}$ for some $i_2, \ldots, i_{k-1}$. 

Note that the minimal possible value of $p_{1,k-1}$ in any SYT $P$ is $k-1$ and the minimal possible value of $p_{\ell-1,1}$ in any SYT $P$ is $\ell-1$.

If $p_{\ell-1,1} = \ell -1$, then the  increasing sequence of elements $p_{\ell-1, 1}$, $p_{i_2,2}$, \ldots, $p_{i_{k-1}, k-1}$ with the smallest possible values has entries $\ell -1, \ell, \ldots, k+ \ell - 3$ and $i_2= i_3=\cdots= i_{k-1}=1$. Note that in this case we have $p_{1,k-1} = k+ \ell -3$. Then the  longest increasing sequence $p_{1, k-1}, p_{2,j_2}, \ldots, p_{\ell-1, j_{\ell-1}}$  with the smallest possible values has entries $k+ \ell -3, k+\ell-2, \ldots, k+ 2\ell - 5$ and $j_2=j_3= \cdots= j_{\ell-1}=2$. This gives us a SYT corresponding to an irreducible (by Theorem~\ref{thm-min-bicrucial-structure}) $(k,\ell)$-bicrucial permutation of length $k+2\ell - 5$.

Similarly, if $p_{1,k-1} = k-1$, then the  increasing sequence of elements $p_{1, k-1}, p_{2,j_2}, \ldots, p_{\ell-1, j_{\ell -1}}$ with the smallest possible values has entries $k -1, k, \ldots, k+ \ell - 3$ and $j_2=j_3= \cdots= j_{\ell-1}=1$. Note that in this case we have $p_{\ell-1,1} = k+ \ell -3$. Then, the  longest increasing sequence $p_{\ell-1, 1}, p_{i_2,2}, \ldots, p_{i_{k-1}, k-1}$  with the smallest possible values has entries $k+ \ell -3, k+\ell-2, \ldots, 2k+ \ell - 5$ and $i_2=i_3= \cdots= i_{k-1}=2$. This gives us a SYT corresponding to an irreducible (by Theorem~\ref{thm-min-bicrucial-structure}) $(k,\ell)$-bicrucial permutation of length $2k+\ell - 5$.

We see that if $k > \ell$, then the case of $p_{\ell-1,1} = \ell -1$ gives a shorter permutation with the respective tableau $P$ shown below, where we underline the first sequence and indicate the second one in bold: 

\begin{center}
\begin{tikzpicture}[scale=0.4]

\draw [line width=0.5](0,5)--(7.3,5);
\draw [line width=0.5](8.7,5)--(12.2,5);
\draw [line width=0.5](0,4)--(7.3,4);
\draw [line width=0.5](8.7,4)--(12.2,4);
\draw [line width=0.5](0,3)--(5,3);
\draw [line width=0.5](0,1)--(5,1);
\draw [line width=0.5](0,0)--(5,0);

\draw [line width=0.5](0,2.7)--(0,5);
\draw [line width=0.5](0,0)--(0,1.3);
\draw [line width=0.5](1.7,2.7)--(1.7,5);
\draw [line width=0.5](1.7,0)--(1.7,1.3);
\draw [line width=0.5](5,0)--(5,1.3);
\draw [line width=0.5](5,2.7)--(5,5);
\draw [line width=0.5](7,4)--(7,5);
\draw [line width=0.5](9,4)--(9,5);
\draw [line width=0.5](12.2,4)--(12.2,5);

\node [below] at (0.8,5.1){{\tiny $1$}};
\node [below] at (3,5.1){{\tiny $\boldsymbol{\ell}$}};
\node [below] at (6,5.1){{\tiny $\boldsymbol{\ell+1}$}};
\node [below] at (8,5){{\tiny $\cdots$}};
\node [below] at (10.6,5.1){{\tiny \underline{$\boldsymbol{k+\ell-3}$}}};
\node [below] at (3.2,4.1){{\tiny \underline{$k+\ell-2$}}};
\node [below] at (0.8,4.1){{\tiny $2$}};
\node [below] at (0.8,3.3){{\tiny $\vdots$}};
\node [below] at (3.2,3.3){{\tiny $\vdots$}};
\node [below] at (0.9,1.1){{\tiny $\boldsymbol{\ell-1}$}};
\node [below] at (3.3,1.1){{\tiny \underline{$k+2\ell-5$}}};

\end{tikzpicture}
\end{center}
If $k = \ell$ then both of the cases of $p_{\ell-1,1} = \ell -1$ and $p_{1,k-1} = k-1$ give permutations of the same length and the respective $P$'s in these cases are obtained from each other by transposition that has no fixed points.
\end{proof}

\begin{cor}\label{min-bicrucial-count-cor} Let $k\geq \ell$. The number of minimal $(k,\ell)$-bicrucial permutations is given by 
$$\frac{(1+\delta_{k,\ell})(k+2\ell-5)!}{(k+\ell-3)(k+\ell-4)(\ell-1)!(\ell-2)!(k-3)!}$$
where $\delta_{k,\ell}$ is the Kronecker delta, namely,
$$\delta_{k,\ell}=\left\{\begin{array}{cc}0 & \mbox{ if }k\neq\ell\\ 1 & \mbox{ if }k=\ell\end{array}\right..$$
\end{cor}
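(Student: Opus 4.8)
The plan is to reduce the count to a single standard Young tableaux enumeration and then apply the hook length formula, exploiting that Theorem~\ref{min-bicrucial-char-thm} pins down the insertion tableau $P$ completely while Theorem~\ref{bicruchar} leaves the recording tableau $Q$ free.

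Set $\lambda := (k-1, 2^{\ell-2})$, the shape of the tableau $P_0$ displayed in Theorem~\ref{min-bicrucial-char-thm}; it has $k-1$ columns, $\ell-1$ rows and $|\lambda| = (k-1) + 2(\ell-2) = k + 2\ell - 5$ cells. By Theorem~\ref{min-bicrucial-char-thm}, any minimal $(k,\ell)$-bicrucial permutation has length $k+2\ell-5$ and insertion tableau $P_0$ when $k > \ell$, and either $P_0$ or its transpose $P_0^T$ (of the conjugate shape $\lambda^T$, with $P_0 \neq P_0^T$) when $k = \ell$. Conversely, I would check directly from the picture that $P_0$ satisfies all hypotheses of Theorem~\ref{bicruchar}: $p_{1,k-1} = k+\ell-3 < p_{2,2} < p_{3,2} < \cdots < p_{\ell-1,2}$ is the first required increasing sequence, and $p_{\ell-1,1} = \ell-1 < p_{1,2} < p_{1,3} < \cdots < p_{1,k-1}$ is the second; the analogous statement for $P_0^T$ follows either by the same inspection or from the reverse symmetry of $(k,k)$-bicruciality together with Lemma~\ref{RSK-properties}. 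Since the conditions of Theorem~\ref{bicruchar} constrain $P$ only, for every SYT $Q$ of shape $\lambda$ the pair $(P_0, Q)$ corresponds under RSK to a $(k,\ell)$-bicrucial permutation of length $|\lambda| = k+2\ell-5$, hence a minimal one; and symmetrically for $(P_0^T, Q)$ when $k=\ell$. As RSK is a bijection and $P_0 \neq P_0^T$, distinct pairs give distinct permutations, so the number of minimal $(k,\ell)$-bicrucial permutations is $(1+\delta_{k,\ell})\,f^\lambda$, where $f^\lambda$ is the number of SYT of shape $\lambda$ and we use $f^\lambda = f^{\lambda^T}$.

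It remains to evaluate $f^\lambda$ for $\lambda = (k-1, 2^{\ell-2})$ by the hook length formula $f^\lambda = |\lambda|!/\prod_c h(c)$. The cell $(1,1)$ has hook $k+\ell-3$ and $(1,2)$ has hook $k+\ell-4$; the cells $(1,j)$ with $3 \leq j \leq k-1$ have hooks $k-3, k-4, \ldots, 1$ (product $(k-3)!$); the cells $(i,1)$ with $2 \leq i \leq \ell-1$ have hooks $\ell-1, \ell-2, \ldots, 2$ (product $(\ell-1)!$); and the cells $(i,2)$ with $2 \leq i \leq \ell-1$ have hooks $\ell-2, \ell-3, \ldots, 1$ (product $(\ell-2)!$). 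Hence $\prod_c h(c) = (k+\ell-3)(k+\ell-4)(k-3)!(\ell-1)!(\ell-2)!$, and dividing $(k+2\ell-5)!$ by this and multiplying by $1+\delta_{k,\ell}$ yields exactly the claimed formula.

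The main obstacle is the converse half of the reduction: Theorem~\ref{min-bicrucial-char-thm} is phrased as a necessary condition on $P$, so one must verify that $P_0$ (and $P_0^T$ when $k=\ell$) actually meets the hypotheses of Theorem~\ref{bicruchar} and that $Q$ is genuinely unconstrained, so that no pair $(P_0,Q)$ is accidentally non-bicrucial or non-minimal. Once this is settled, the rest is routine hook length bookkeeping.
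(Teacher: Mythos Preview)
Your proof is correct and follows essentially the same route as the paper: use Theorem~\ref{min-bicrucial-char-thm} to determine the insertion tableau (hence the shape $\lambda=(k-1,2^{\ell-2})$), observe that $Q$ is unconstrained by Theorem~\ref{bicruchar}, and evaluate $f^{\lambda}$ via the hook length formula, picking up the extra factor of~$2$ when $k=\ell$. Your explicit verification that $P_0$ satisfies the hypotheses of Theorem~\ref{bicruchar} is a welcome addition that the paper leaves implicit.
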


\begin{proof} By Theorem~\ref{min-bicrucial-char-thm} we know the shape of the SYT corresponding to a $(k,\ell)$-bicrucial permutation $\sigma$, and we only need to apply the hook length formula to count the number of possible SYT $Q$.  The tableau listing the hook length of each cell in the respective Young diagram is

\begin{center}
\begin{tikzpicture}[scale=0.4]

\draw [line width=0.5](0,0)--(0,1.3);
\draw [line width=0.5](0,2.7)--(0,6);
\draw [line width=0.5](3,0)--(3,1.3);
\draw [line width=0.5](3,2.7)--(3,6);
\draw [line width=0.5](6,0)--(6,1.3);
\draw [line width=0.5](6,2.7)--(6,6);
\draw [line width=0.5](8,5)--(8,6);
\draw [line width=0.5](10,5)--(10,6);
\draw [line width=0.5](12,5)--(12,6);
\draw [line width=0.5](14,5)--(14,6);

\draw [line width=0.5](0,0)--(6,0);
\draw [line width=0.5](0,1)--(6,1);
\draw [line width=0.5](0,3)--(6,3);
\draw [line width=0.5](0,4)--(6,4);
\draw [line width=0.5](11.7,5)--(14,5);
\draw [line width=0.5](0,5)--(10.3,5);
\draw [line width=0.5](11.7,6)--(14,6);
\draw [line width=0.5](0,6)--(10.3,6);

\node [below] at (1.5,6.1){{\tiny $k+\ell-3$}};
\node [below] at (1.3,5.1){{\tiny $\ell-1$}};
\node [below] at (1.3,4.1){{\tiny $\ell-2$}};
\node [below] at (1.4,3.3){{\tiny $\vdots$}};
\node [below] at (1.3,1.1){{\tiny $2$}};

\node [below] at (4.5,6.1){{\tiny $k+\ell-4$}};
\node [below] at (4.3,5.1){{\tiny $\ell-2$}};
\node [below] at (4.3,4.1){{\tiny $\ell-3$}};
\node [below] at (4.4,3.3){{\tiny $\vdots$}};
\node [below] at (4.3,1.1){{\tiny $1$}};

\node [below] at (7,6.1){{\tiny $k-3$}};
\node [below] at (9,6.1){{\tiny $k-4$}};
\node [below] at (11,6){{\tiny $\cdots$}};
\node [below] at (13,6.1){{\tiny $1$}};

\end{tikzpicture}
\end{center}

\noindent
from where the result follows by noticing the factor of 2 in the case of $k=\ell$ (both $P$ and $P^T$ cannot be of the required form). 
\end{proof}

As an immediate consequence of Corollary~\ref{min-bicrucial-count-cor} we have the following result. 

\begin{cor}\label{min-k3-bicrucial-count-cor} 
Let $k\geq 3$. The number of minimal $(k,3)$-bicrucial permutations is given by 
$$\frac{(1+\delta_{k,3})(k+1)(k-2)}{2}$$
where $\delta_{k,\ell}$ is the Kronecker delta. For $k>3$ the sequence begins with $5, 9, 14, 20, 27, 35,\ldots$ and this is the sequence $A000096$ in \cite{oeis} with many combinatorial interpretations. 
\end{cor}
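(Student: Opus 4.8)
The plan is to specialize the formula from Corollary~\ref{min-bicrucial-count-cor} by setting $\ell = 3$ and simplifying. First I would substitute $\ell = 3$ into
$$\frac{(1+\delta_{k,\ell})(k+2\ell-5)!}{(k+\ell-3)(k+\ell-4)(\ell-1)!(\ell-2)!(k-3)!},$$
which turns the numerator factorial into $(k+1)!$, the denominator linear factors into $k(k-1)$, the factorials $(\ell-1)!(\ell-2)! = 2! \cdot 1! = 2$, and leaves $(k-3)!$. So the expression becomes
$$\frac{(1+\delta_{k,3})(k+1)!}{2\,k(k-1)(k-3)!}.$$
Then I would cancel: $(k+1)! = (k+1)k(k-1)(k-2)(k-3)!$, so the $k(k-1)(k-3)!$ in the denominator cancels against the corresponding factors in $(k+1)!$, leaving $(k+1)(k-2)$ in the numerator over the remaining factor of $2$. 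This yields $\frac{(1+\delta_{k,3})(k+1)(k-2)}{2}$, exactly as claimed.

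Next I would verify that $k = 3$ is covered: here $\delta_{k,3} = 1$ so the count is $\frac{2 \cdot 4 \cdot 1}{2} = 4$, and one should also note that for $k = 3$ the formula in Corollary~\ref{min-bicrucial-count-cor} requires care since $(k-3)! = 0! = 1$, so no division-by-zero issue arises; the manipulation above is still valid with the understanding that the cancellation $(k+1)!/(k(k-1)(k-3)!) = (k+1)(k-2)$ holds for all $k \geq 3$.

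For the OEIS identification, I would observe that for $k > 3$ we have $\delta_{k,3} = 0$, so the count is $\frac{(k+1)(k-2)}{2}$. Writing $k - 2 = m$ (so $m \geq 2$), this is $\frac{(m+3)m}{2} = \binom{m+2}{2} - 1$ — the $m$-th term of $1, 3, 6, 10, \dots$ minus a shift — and evaluating at $k = 4, 5, 6, 7, 8, 9, \dots$ gives $5, 9, 14, 20, 27, 35, \dots$. Comparing with the defining formula $\frac{n(n+3)}{2}$ (or equivalently $\binom{n+2}{2}-1$) for sequence $A000096$ in \cite{oeis} confirms the match. I do not anticipate a genuine obstacle here: the entire statement is a routine algebraic specialization of the already-established Corollary~\ref{min-bicrucial-count-cor}, and the only mild subtlety is bookkeeping the Kronecker delta and checking the $k=3$ edge case, which I would dispatch explicitly as above.
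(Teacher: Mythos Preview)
Your proposal is correct and follows the same approach as the paper, which simply states that the result is an immediate consequence of Corollary~\ref{min-bicrucial-count-cor}. Your explicit substitution $\ell=3$, the factorial cancellation, the check of the $k=3$ edge case, and the OEIS identification are all accurate and just spell out in detail what the paper leaves implicit.
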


\begin{rem}\label{irr-vs-min} \normalfont It follows from the proof of Theorem~\ref{min-bicrucial-char-thm} that irreducible $(k,\ell)$-bicrucial permutations are not necessarily minimal $(k,\ell)$-bicrucial permutations as for $k>\ell$ irreducible $(k,\ell)$-bicrucial permutations of length $2k+\ell-5$ exist.  \end{rem}

\begin{thm}\label{bicrucial-growth-thm} For $k\geq\ell$ and $k+2\ell-5\leq n < (k-1)(\ell-1)$, we have the following monotone property for the number of bicrucial permutations:
\begin{equation}\label{monot-bicru}s^{(b)}_n(k,\ell)< s^{(b)}_{n+1}(k,\ell).\end{equation} 
\end{thm}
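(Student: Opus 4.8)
The strategy mirrors the proof of Theorem~\ref{crucial-growth-thm}, but now I must work with the SYT characterization of $(k,\ell)$-bicrucial permutations from Theorem~\ref{bicruchar}, which imposes \emph{two} increasing-sequence conditions on $P$: one anchored at $p_{1,k-1}$ running down the rows, and one anchored at $p_{\ell-1,1}$ running across the columns. The plan is to first establish the non-strict inequality $s^{(b)}_n(k,\ell)\leq s^{(b)}_{n+1}(k,\ell)$ by constructing an injection from $(k,\ell)$-bicrucial permutations of length $n$ to those of length $n+1$, and then to exhibit, in each range of $n$, at least one extra $(k,\ell)$-bicrucial permutation of length $n+1$ not in the image of that injection.

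For the injection: given $\sigma\in S_n$ bicrucial with pair $(P,Q)$, since $n<(k-1)(\ell-1)$ the diagram is a proper sub-diagram of the $(\ell-1)\times(k-1)$ rectangle, so $P$ has an addable corner cell, say in row $i$, column $j$. I want to choose this corner carefully so that filling it with $n+1$ (and likewise $Q$ with $n+1$) preserves \emph{both} conditions of Theorem~\ref{bicruchar}. The safe choice is to take the \emph{bottommost} addable corner when it lies strictly below row $1$ and strictly right of column $1$ — such an addition changes neither $p_{1,k-1}$ nor $p_{\ell-1,1}$, and it can only extend (never destroy) the two witnessing increasing sequences, since adding a cell to an SYT never removes a row–increasing or column–increasing chain. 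One must handle the degenerate situations where the only addable corner is at the end of row $1$ or at the bottom of column $1$; here I would argue directly using the structure of minimal bicrucial permutations from Theorem~\ref{thm-min-bicrucial-structure} (for $n=k+2\ell-5$) and a small local modification of $P$ otherwise, exactly in the spirit of the ``alternative proof'' paragraph of Theorem~\ref{crucial-growth-thm}. Injectivity is immediate because the construction is determined by $(P,Q)$ and the added entry is the maximum.

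For strictness, I would produce a permutation of length $n+1$ obtained by adjoining $n+1$ in a corner cell that lies in the \emph{last row} (row $\ell-1$) or the \emph{last column} (column $k-1$) in a way that creates a new extreme entry with no cell above or to its left — so that deleting $n+1$ from $P'$ does \emph{not} return an SYT meeting both conditions of Theorem~\ref{bicruchar}, making $\sigma'$ unreachable by the injection above. Concretely, for $k+2\ell-5\le n\le (k-1)(\ell-2)$ one can adjoin $n+1$ as a new singleton in row $\ell-1$ on top of an SYT of shape with $\ell-2$ rows satisfying the appropriate conditions, relating it to $s^{(b)}_n(k,\ell-1)$ just as in case (i) of Theorem~\ref{crucial-growth-thm}, and thereby even get the sharper bound $s^{(b)}_n(k,\ell)+s^{(b)}_n(k,\ell-1)\le s^{(b)}_{n+1}(k,\ell)$; for $(k-1)(\ell-2)<n<(k-1)(\ell-1)$ one builds a specific $P'$ with $\ell-1$ full-ish rows whose bottom row has ``small'' entries (mimicking case (ii)), so that $P$ itself fails the first condition but $P'=P\cup\{n+1\}$ satisfies both. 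The main obstacle, and the point requiring the most care, is verifying that the explicitly constructed tableaux genuinely satisfy \emph{both} increasing-chain conditions of Theorem~\ref{bicruchar} simultaneously — the column-chain condition anchored at $p_{\ell-1,1}$ is the delicate one, since it constrains the entries along the bottom boundary, and it is precisely this condition that forced the case split $k>\ell$ versus $k=\ell$ in Theorem~\ref{min-bicrucial-char-thm}. I expect to handle this by choosing the ``fill left-to-right, top-to-bottom'' labelings, for which the last column of $P'$ automatically furnishes the row-chain and the bottom row (after adjoining $n+1$) automatically furnishes the column-chain.
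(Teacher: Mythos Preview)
Your injection is fine and actually cleaner than you fear: since any $(k,\ell)$-bicrucial $P$ already has exactly $k-1$ columns and $\ell-1$ rows, every addable corner that keeps the shape inside the $(\ell-1)\times(k-1)$ rectangle automatically lies in some row $i\geq 2$ and column $j\geq 2$, so the ``degenerate'' cases you worry about never occur, and adjoining $n+1$ there preserves both chains of Theorem~\ref{bicruchar}.

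The genuine gap is in your strictness argument, case (i). If you take an SYT $P$ with $\ell-2$ rows coming from a $(k,\ell-1)$-bicrucial permutation and adjoin $n+1$ as the \emph{singleton} $p'_{\ell-1,1}$, then the second condition of Theorem~\ref{bicruchar} demands an increasing sequence $p'_{\ell-1,1},\,p'_{i_2,2},\,\ldots,\,p'_{i_{k-1},k-1}$. But $p'_{\ell-1,1}=n+1$ is the maximum entry of $P'$, so no element in any later column can exceed it and the chain cannot exist for $k\geq 3$. Thus your construction does \emph{not} produce a $(k,\ell)$-bicrucial permutation, and the claimed bound $s^{(b)}_n(k,\ell)+s^{(b)}_n(k,\ell-1)\le s^{(b)}_{n+1}(k,\ell)$ is unsupported. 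Your closing remark that ``the bottom row (after adjoining $n+1$) automatically furnishes the column-chain'' cannot rescue this, since that bottom row is a single cell.

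This is exactly why the paper does \emph{not} mimic cases (i)--(ii) of the main proof of Theorem~\ref{crucial-growth-thm} but instead follows the \emph{alternative} proof there: for $k+2\ell-5<n<(k-1)(\ell-1)-2$ one exhibits a bicrucial $P$ with at least two addable corners (obtained from the minimal bicrucial $P$ of Theorem~\ref{min-bicrucial-char-thm} by successively adjoining the new maxima in corners), so the injection (which picks one fixed corner) misses the extension at the other corner; the boundary values $n=k+2\ell-5$ and $n\in\{(k-1)(\ell-1)-2,(k-1)(\ell-1)-1\}$ are then handled by small explicit modifications. The key point is that this route never forces $p_{\ell-1,1}$ to be the global maximum, so the column-chain condition survives.
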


\begin{proof} Our proof is very similar to the alternative proof of Theorem~\ref{crucial-growth-thm}, so we just provide a sketch of it. 

Note that for $k+2\ell-5<n<(k-1)(\ell-1)-2$ we can alway construct a SYT $P$ with at least two corners satisfying the conditions in Theorem~\ref{bicruchar} (e.g. by starting with a $P$ corresponding to a minimal $(k,\ell)$-bicrucial permutation and then introducing new largest elements, one by one, in corners). That means that we can extend $P$ in more than one way by placing $n+1$ in one of the corners, and such an extension is clearly injective and results in a SYT $P'$  satisfying the conditions in Theorem~\ref{bicruchar}. In the remaining cases we have exactly one corner and placing $n+1$ in that corner results in a SYT satisfying the conditions in Theorem~\ref{bicruchar}. If $n\in\{(k-1)(\ell-1)-2,(k-1)(\ell-1)-1\}$ then we can apply the construction in case (ii) in the proof of Theorem~\ref{crucial-growth-thm} to get an extra $(k,\ell)$-crucial permutation of length $n+1$. 

Finally, if $n=k+2\ell-5$ then note that $p_{\ell-1,2}=n$. Suppose that $\ell>3$. We can extend $P$ to $P'$ by letting the corner $p'_{2,3}=n$, the element $p'_{\ell-1,2}=n+1$, and keeping all other elements of $P$ the same. Such a $P'$ clearly satisfies the conditions in Theorem~\ref{bicruchar} and it was not obtained previously by filling in the corner element (as $n+1$ is in the bottom row now unlike the other case). If $\ell=3$ then we get an extra extension by letting the corner element $p'_{2,3}=n+1$ and swapping the elements $p_{2,2}=n$ and $p_{1,k-1}$ while keeping all other elements of $P$ the same, which is clearly an injective operation. 
\end{proof}

\begin{rem} \normalfont If follows directly from the proof of Theorem~\ref{bicrucial-growth-thm} that $$2s^{(b)}_n(k,\ell)\leq s^{(b)}_{n+1}(k,\ell).$$ \end{rem}

\section{Characterization of $(k,\ell)$-top-right-crucial permutations via RSK correspondence}\label{RSK-characterization-top-right-crucial}

The following theorem is an immediate corollary of Theorems~\ref{cruchar} and \ref{left-top-bottom} and the fact that the set of $(k,\ell)$-top-right-crucial permutations is the intersection of the sets of $(k,\ell)$-right-crucial and $(k,\ell)$-top-crucial permutations. 

\begin{thm} \label{top-right-cruchar}
Let $\sigma\in S_n$ and $(P,Q)$ be the pair of SYT corresponding to $\sigma$. Then,  $\sigma$ is $(k,\ell)$-top-right-crucial if and only if the following holds:
\begin{itemize}
\item the number of columns in $P$ (and $Q$) is $k-1$;
\item the number of rows in $P$ (and $Q$) is $\ell-1$;
\item $P$  contains an increasing sequence of elements $p_{1, k-1}, p_{2,j_2}, \ldots, p_{\ell-1, j_{\ell-1}}$ for some $j_2, \ldots, j_{\ell-1}$;
\item $Q$  contains an increasing sequence of elements $q_{1, k-1}$, $q_{2,j_2}$, \ldots, $q_{\ell-1, j_{\ell-1}}$ for some $j_2, \ldots, j_{\ell-1}$.
\end{itemize}
\end{thm}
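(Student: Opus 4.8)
The plan is to deduce the statement from the two characterizations already in hand by observing that the class of $(k,\ell)$-top-right-crucial permutations is exactly the intersection of the class of $(k,\ell)$-right-crucial permutations and the class of $(k,\ell)$-top-crucial permutations. This is immediate from the definitions in Section~\ref{5-types-of-crucial-perms-sec}: a permutation is $(k,\ell)$-top-right-crucial precisely when it avoids $i_k$ and $d_\ell$, every extension to the right introduces an occurrence of $i_k$ or $d_\ell$ (that is, it is $(k,\ell)$-right-crucial, abbreviated to $(k,\ell)$-crucial in the paper), and every extension from above likewise introduces such an occurrence (which, together with avoidance of $i_k$ and $d_\ell$, is exactly the defining property of a $(k,\ell)$-top-crucial permutation).

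First I would invoke Theorem~\ref{cruchar} for the assertion ``$\sigma$ is $(k,\ell)$-right-crucial'': this gives that $P$ (equivalently $Q$) has exactly $k-1$ columns and $\ell-1$ rows, and that $P$ contains an increasing sequence $p_{1,k-1}, p_{2,j_2}, \ldots, p_{\ell-1,j_{\ell-1}}$ for suitable $j_2,\ldots,j_{\ell-1}$. Then I would invoke the $(k,\ell)$-top-crucial clause of Theorem~\ref{left-top-bottom} for the assertion ``$\sigma$ is $(k,\ell)$-top-crucial'': this gives the same shape constraints on $P$ and $Q$ together with an increasing sequence $q_{1,k-1}, q_{2,j_2}, \ldots, q_{\ell-1,j_{\ell-1}}$ in $Q$. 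Conjoining the two sets of conditions, and noting that the row/column constraints coincide, yields exactly the four bullet points. Conversely, if all four bullets hold, then $P$ meets the hypotheses of Theorem~\ref{cruchar} and $Q$ meets the hypotheses of the top-crucial clause of Theorem~\ref{left-top-bottom}, so $\sigma$ is simultaneously right-crucial and top-crucial, hence top-right-crucial.

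The single point worth flagging is that the indices $j_2,\ldots,j_{\ell-1}$ in the third and fourth bullets are governed by separate existential quantifiers: the increasing chain witnessed in $P$ and the one witnessed in $Q$ need not share column indices, and no such compatibility is asserted or required. There is essentially no obstacle here — the argument is a two-line bookkeeping exercise combining the previous theorems. (For a reader who prefers not to pass through $(k,\ell)$-top-crucial permutations, one may argue directly instead: $\sigma$ is $(k,\ell)$-top-crucial if and only if $\sigma^{-1}$ is $(k,\ell)$-right-crucial, and by Lemma~\ref{RSK-properties} the RSK pair of $\sigma^{-1}$ is $(Q,P)$, so applying Theorem~\ref{cruchar} to $(Q,P)$ reproduces the fourth bullet verbatim. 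This is precisely the reduction already encapsulated in Theorem~\ref{left-top-bottom}.)
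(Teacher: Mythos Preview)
Your proposal is correct and follows essentially the same approach as the paper: the paper states the theorem as an immediate corollary of Theorems~\ref{cruchar} and~\ref{left-top-bottom}, using that the set of $(k,\ell)$-top-right-crucial permutations is the intersection of the sets of $(k,\ell)$-right-crucial and $(k,\ell)$-top-crucial permutations. Your added remark about the separate existential quantifiers on the $j_i$'s and the alternative route via $\sigma^{-1}$ is accurate and helpful but not needed for the argument.
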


As a corollary to Theorem~\ref{top-right-cruchar}, we have the following result.

\begin{cor}\label{k3-top-right-crucial-count} The number of $(k,3)$-top-right-crucial permutations of length $n\geq k+1$, for $k\geq 3$, is given by 
$$
s^{(tr)}_n(k,3)=\left(\frac{2k-n}{n-k}{n-1\choose k}\right)^2.
$$
Moreover, for $2\leq i\leq k-1$, the number of $(k,3)$-right-crucial permutations of $i$-th smallest length $k+i-1$ is given by
$$
\left(\frac{k-i+1}{i-1}{k+i-2\choose k}\right)^2
$$
and this number is $0$ for $i\geq k$. 
\end{cor}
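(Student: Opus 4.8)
The strategy is to run the argument from the proof of Corollary~\ref{k3-crucial-count}, now also tracking the extra constraint that the top-crucial hypothesis places on the recording tableau $Q$. By Theorem~\ref{top-right-cruchar} with $\ell=3$, a permutation $\sigma\in S_n$ is $(k,3)$-top-right-crucial if and only if both $P$ and $Q$ have $k-1$ columns and exactly two rows, $P$ contains an increasing pair $p_{1,k-1}<p_{2,j_2}$ for some $j_2$, and $Q$ contains an increasing pair $q_{1,k-1}<q_{2,j_2}$ for some $j_2$. The first two conditions force the common shape of $P$ and $Q$ to be $\lambda=(k-1,\,n-k+1)$, which is a partition only for $n\le 2k-2$ and whose second row has length at least $2$ precisely when $n\ge k+1$. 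Hence, for $k+1\le n\le 2k-2$, the pair $(P,Q)$ runs over all ordered pairs of SYT of shape $\lambda$ subject to the two ``increasing pair'' conditions, and these conditions constrain $P$ and $Q$ independently.

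I would then record the elementary fact (implicit in the proof of Corollary~\ref{k3-crucial-count}) that for a two-row SYT $T$ of shape $\lambda=(k-1,n-k+1)$, the condition ``$t_{1,k-1}<t_{2,j}$ for some $j$'' is equivalent to ``$t_{2,n-k+1}=n$'', i.e.\ to $n$ lying in the second row: the maximal entry $n$ sits at one of the two outer corners $(1,k-1)$ or $(2,n-k+1)$, and in the first case $t_{1,k-1}=n$ exceeds every second-row entry so the condition fails, while in the second case $j=n-k+1$ works. Thus the number of admissible $P$ equals the number of admissible $Q$, each being the number $N$ of SYT of shape $\lambda$ having $n$ in the second row; deleting the corner cell holding $n$ identifies $N$ with the number of SYT of shape $(k-1,n-k)$, which by the hook length computation already carried out in the proof of Corollary~\ref{k3-crucial-count} equals
\[N=\frac{(2k-n)(n-1)!}{k!\,(n-k)!}=\frac{2k-n}{n-k}\binom{n-1}{k}.\]
Since, with $\lambda$ fixed, the constraint on $P$ and the constraint on $Q$ factor, the number of $(k,3)$-top-right-crucial permutations of length $n$ is $N^2$, which is the claimed formula for $s^{(tr)}_n(k,3)$.

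For the ``$i$-th smallest length'' refinement, I would use Theorem~\ref{thm-min-top-right-crucial} (minimal length $k$) together with the Erd\H{o}s--Szekeres theorem \cite{ES1935} (maximal length $(k-1)(3-1)=2k-2$) to conclude that the $i$-th smallest length is $n=k+i-1$; for $2\le i\le k-1$, substituting $n=k+i-1$ into $N^2$ yields $\left(\dfrac{k-i+1}{i-1}\binom{k+i-2}{k}\right)^2$, while for $i\ge k$ the length $k+i-1\ge 2k-1$ exceeds $2k-2$, so $\lambda$ is not a partition and the count is $0$. The argument is routine given Theorem~\ref{top-right-cruchar}; the only point requiring a little care is the equivalence in the second paragraph, and the remark that once the shape is fixed the $P$- and $Q$-conditions are independent and enumerated by the same number $N$.
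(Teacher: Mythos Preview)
Your proposal is correct and follows essentially the same approach as the paper's proof, which simply states that the argument of Corollary~\ref{k3-crucial-count} goes through verbatim once one observes that the constraint on $Q$ in Theorem~\ref{top-right-cruchar} is identical to the constraint on $P$, so the count of admissible $Q$ equals the count of admissible $P$ given in~(\ref{SYT-P-formula-k3}). You have spelled out more of the details (in particular the equivalence ``$t_{1,k-1}<t_{2,j}$ for some $j$'' $\Leftrightarrow$ ``$n$ lies in the second row''), but the underlying argument is the same.
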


\begin{proof} Our proof follows exactly the same steps as those in the proof of Corollary~\ref{k3-crucial-count} except we note that the number of choices for $Q$ is now the same as the number of choices for $P$ and hence is given by (\ref{SYT-P-formula-k3}).\end{proof}

\begin{rem}\label{rem-square-numbers}\normalfont  The fact that $P$ and $Q$ have the same restrictions for $(k,\ell)$-top-right-crucial permutations implies that the number $s^{(tr)}_n(k,\ell)$ of such permutations of length $n$ is the sum of $t$ square numbers, where $t$ is the number of Young diagrams with $n$ squares, $k-1$ columns and $\ell-1$ rows (equivalently, $t$ is the number of partitions of $n$ into $\ell-1$ parts with the largest part of size $k-1$). In particular, the number of minimal and next minimal $(k,\ell)$-top-right-crucial permutations, respectively, $s^{(tr)}_{k+\ell-3}(k,\ell)$ and  $s^{(tr)}_{k+\ell-2}(k,\ell)$, are square numbers for any $k$ and $\ell$ as there are unique Young diagrams in question. By Theorem~\ref{thm-min-top-right-crucial},  $s^{(tr)}_{k+\ell-3}(k,\ell)=1$ and in Theorem~\ref{next-smallest-top-right-enum} we will show that   $s^{(tr)}_{k+\ell-2}(k,\ell)=(k+\ell-4)^2$.  See Theorem~\ref{quadro-square-sum} for a relevant result on quadrocrucial permutations. \end{rem}

\begin{thm}\label{next-smallest-top-right-enum} For $k,\ell\geq 3$, the number of next minimal $(k,\ell)$-top-right-crucial permutations is $s^{(tr)}_{k+\ell-2}(k,\ell)=(k+\ell-4)^2$.  \end{thm}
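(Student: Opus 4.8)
The plan is to use Remark~\ref{rem-square-numbers}, which reduces the claim to counting standard Young tableaux of a single shape subject to a chain condition. First I would verify that there is a \emph{unique} Young diagram with $k+\ell-2$ cells, exactly $k-1$ columns and exactly $\ell-1$ rows: the first row must have $k-1$ cells, which leaves $\ell-1$ cells to be distributed among the rows $2,\dots,\ell-1$ with every row nonempty, and the only partition of $\ell-1$ into $\ell-2$ positive parts is $(2,1,\dots,1)$; hence $\lambda=(k-1,2,1^{\ell-3})$ (here we also use $k\geq 3$). By Remark~\ref{rem-square-numbers} together with Theorem~\ref{top-right-cruchar} we then have $s^{(tr)}_{k+\ell-2}(k,\ell)=a_\lambda^2$, where $a_\lambda$ is the number of SYT of shape $\lambda$ that contain an increasing sequence $p_{1,k-1},p_{2,j_2},\dots,p_{\ell-1,j_{\ell-1}}$; so it suffices to prove $a_\lambda=k+\ell-4$.

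Next I would specialise the chain condition to $\lambda$. Since rows $3,\dots,\ell-1$ of $\lambda$ consist of a single cell in column $1$, any such increasing sequence must occupy the cells $(1,k-1),(2,j_2),(3,1),(4,1),\dots,(\ell-1,1)$ with $j_2\in\{1,2\}$, and the inequalities $p_{3,1}<p_{4,1}<\dots<p_{\ell-1,1}$ hold automatically because column $1$ is increasing. Hence $a_\lambda$ is the number of SYT of shape $\lambda$ lying in $A_1\cup A_2$, where $A_1=\{\,p_{1,k-1}<p_{2,1}\,\}$ and $A_2=\{\,p_{1,k-1}<p_{2,2}<p_{3,1}\,\}$ (for $\ell=3$ the cell $(3,1)$ is absent and the condition defining $A_2$ is just $p_{1,k-1}<p_{2,2}$). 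I would then evaluate $a_\lambda=|A_1|+|A_2|-|A_1\cap A_2|$ by elementary order arguments.

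For $A_1$: the inequality $p_{1,k-1}<p_{2,1}$, combined with the fact that every cell outside row $1$ dominates $(2,1)$ in the cell order, forces row $1$ to carry the entries $1,\dots,k-1$ and $p_{2,1}=k$; the tableau is then determined by which of the $\ell-2$ remaining entries is placed in cell $(2,2)$, so $|A_1|=\ell-2$. For $A_2$ with $\ell\geq 4$: $p_{2,2}$ exceeds all $k-1$ entries of row $1$ and also $p_{2,1}$, while $\ell-3$ entries lie above it along $p_{2,2}<p_{3,1}<\dots<p_{\ell-1,1}$, so $p_{2,2}$ is pinned to $k+1$; the only remaining freedom is a filling of a shape-$(k-1,1)$ subtableau by $\{1,\dots,k\}$, hence $|A_2|=k-1$, and for $\ell=3$ a direct check (the largest entry must occupy cell $(2,2)$) gives the same answer $k-1$. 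Finally $|A_1\cap A_2|=1$: inside an $A_1$-tableau, $p_{2,2}<p_{3,1}$ forces $p_{2,2}$ to be the least of the remaining $\ell-2$ entries, leaving exactly one tableau, and for $\ell=3$ one has $A_1\subseteq A_2$ with $|A_1|=1$, which is consistent. Combining, $a_\lambda=(\ell-2)+(k-1)-1=k+\ell-4$, and therefore $s^{(tr)}_{k+\ell-2}(k,\ell)=(k+\ell-4)^2$.

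I expect the main difficulty to be bookkeeping in the boundary cases rather than any single conceptual step: when $\ell=3$ the chain has length $2$ and cell $(3,1)$ disappears, and when $k=3$ the cell $(1,k-1)$ is no longer a corner of $\lambda$, so in both situations I would check explicitly that the three counts $|A_1|=\ell-2$, $|A_2|=k-1$ and $|A_1\cap A_2|=1$ remain valid. The one genuinely substantive ingredient is the ``pinning'' argument for $A_2$: an entry that exceeds $t$ prescribed distinct entries is at least $t+1$, and an entry that lies below $t$ prescribed distinct entries is at most $n-t$; making this precise is what forces $p_{2,2}=k+1$, so that is where I would be most careful.
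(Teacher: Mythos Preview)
Your proof is correct and follows essentially the same approach as the paper: both identify the unique shape $\lambda=(k-1,2,1^{\ell-3})$, reduce via Theorem~\ref{top-right-cruchar} to counting SYT of that shape with the chain condition, and then count by elementary casework on the position of $p_{2,1}$ relative to row~1. The only cosmetic difference is that the paper splits into the disjoint cases $p_{2,1}<p_{1,k-1}$ (giving $k-2$) and $p_{2,1}>p_{1,k-1}$ (giving $\ell-2$), whereas you use inclusion--exclusion on $A_1\cup A_2$ to reach $(\ell-2)+(k-1)-1$; these are the same count.
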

 
 \begin{proof} First note that by Theorem~\ref{top-right-cruchar}, the Young diagram of any $(k,\ell)$-top-right-crucial permutation of length $k+\ell-2$ has $k-1$ squares in the first row, two squares in the second row and one square in rows $3, 4,\ldots,\ell-1$. We wish to count the number of ways to build $P$, which is the number of possible SYT of the shape satisfying the ``increasing sequence of elements'' condition in Theorem~\ref{top-right-cruchar}.
 
 Any SYT having the shape in question has an $i$, $1\leq i\leq k-1$, such that the first row in $P$ begins with $1,2,\ldots,i$ and $p_{2,1}=i+1$. If $i<k-1$ then, by Theorem~\ref{top-right-cruchar},  we must have $p_{1,i+1}<p_{1,i+2}<\cdots <p_{1,k-1}<p_{2,2}<p_{3,1}<p_{4,1}<\cdots <p_{\ell-1,1}$
 and hence there is a unique way to build such a $P$ (where, in particular, $p_{1,i+1}=i+2$). That far, we found $k-2$ ways to construct $P$. However, if $i=k-1$ (and $p_{2,1}=k$) then $p_{2,2}$ can be any element in $\{k+1,k+2,\ldots,k+\ell-2\}$ giving us additional $\ell-2$ possibilities to build $P$. The claimed result now follows from $k-2+\ell-2=k+\ell-4$ and the observation that by Theorem~\ref{top-right-cruchar}, the number of ways to build $P$ is the same as that to build~$Q$.  \end{proof}
 
Following the same arguments as in the proof of Theorem~\ref{crucial-growth-thm} related to $(k,\ell)$-crucial permutations, except always choosing for $Q$  the SYT of the form in question obtained by filling in squares with consecutively increasing numbers from left to right and from top to bottom (that would satisfy the condition on $Q$ in Theorem~\ref{top-right-cruchar}), we obtain the following result.

\begin{thm}\label{top-right-crucial-growth-thm} For $k+\ell-3\leq n < (k-1)(\ell-1)$, we have the following monotone property for the number of top-right-crucial permutations:
\begin{equation}\label{monot-trcru}s^{(tr)}_n(k,\ell)< s^{(tr)}_{n+1}(k,\ell).\end{equation} 
\end{thm}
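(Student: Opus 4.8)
The plan is to transcribe the proof of Theorem~\ref{crucial-growth-thm}, replacing the single-tableau condition of Theorem~\ref{cruchar} by the two-tableau condition of Theorem~\ref{top-right-cruchar}. Recall that $\sigma\in S_n$ is $(k,\ell)$-top-right-crucial exactly when its $P$ and $Q$ both have $k-1$ columns and $\ell-1$ rows --- in particular, full first row --- and \emph{each} of $P$ and $Q$ contains an increasing sequence through the last cell of its first row that descends one row at a time. Two elementary facts take care of the extra tableau $Q$. \textbf{Fact A:} for any Young diagram $\mu$ with $\ell-1$ rows, $k-1$ columns, and full first row, the SYT $T_\mu$ obtained by filling $\mu$ with $1,2,\dots,|\mu|$ from left to right and from top to bottom satisfies the increasing-sequence condition, since the last cell of its first row holds $k-1$ and the first-column entries strictly increase. \textbf{Fact B:} adjoining a new largest entry in a corner of a SYT leaves every earlier entry in place (and leaves the first row unchanged), so it preserves the increasing-sequence condition, as long as the enlarged shape still fits inside the $(\ell-1)\times(k-1)$ box.

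First I would show $s^{(tr)}_n(k,\ell)\le s^{(tr)}_{n+1}(k,\ell)$. Let $\sigma\in S_n$ be $(k,\ell)$-top-right-crucial, with tableaux $(P,Q)$ of shape $\lambda$. Since $n<(k-1)(\ell-1)$, the diagram $\lambda$ is not the full box, so it has an addable cell that keeps the shape inside the box; let $c(\lambda)$ be such a cell of least row index --- it depends on $\lambda$ only and lies in a column $\ge 2$. Insert $n+1$ into cell $c(\lambda)$ of both $P$ and $Q$, obtaining SYT $P',Q'$ of a common shape; by Fact B they still satisfy the conditions and still have $k-1$ columns and $\ell-1$ rows, so the permutation $\sigma'\in S_{n+1}$ with $(P',Q')$ as its pair of SYT is $(k,\ell)$-top-right-crucial. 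The map $\sigma\mapsto\sigma'$ is injective: from $\sigma'$ one reads off $(P',Q')$, locates the cell of $n+1$ (the same cell in $P'$ and $Q'$), deletes it, and recovers $(P,Q)$, hence $\sigma$.

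For strictness I would run cases (i) and (ii) of the proof of Theorem~\ref{crucial-growth-thm}, whose ranges together cover $k+\ell-3\le n<(k-1)(\ell-1)$. In \emph{case (i)}, $k+\ell-3\le n\le(k-1)(\ell-2)$ (vacuous when $\ell=3$), I take a $(k,\ell-1)$-top-right-crucial pair $(P,Q)$ of size $n$ --- one exists for every such $n$, since by Fact A (applied with $\ell-1$ in place of $\ell$) the pair $(T_\mu,T_\mu)$ is $(k,\ell-1)$-top-right-crucial for every Young diagram $\mu$ with $k-1$ columns and $\ell-2$ rows, and such $\mu$ have every size from $k+\ell-4$ to $(k-1)(\ell-2)$ --- and adjoin $n+1$ as a new bottom row, cell $(\ell-1,1)$, to both $P$ and $Q$; the old increasing sequences extend trivially through the new maximum, so the result is $(k,\ell)$-top-right-crucial. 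This produces $s^{(tr)}_n(k,\ell-1)\ge1$ permutations of length $n+1$ in which $n+1$ has no element to its left, so none of them lies in the image of the injection, and hence $s^{(tr)}_n(k,\ell)<s^{(tr)}_n(k,\ell)+s^{(tr)}_n(k,\ell-1)\le s^{(tr)}_{n+1}(k,\ell)$. In \emph{case (ii)}, $(k-1)(\ell-2)<n<(k-1)(\ell-1)$, I build the insertion tableau $P$ exactly as in case (ii) of the proof of Theorem~\ref{crucial-growth-thm} (first $\ell-2$ rows full; last row of length $d:=n-(k-1)(\ell-2)\ge1$ filled with small entries, so that $P$ fails the increasing-sequence condition), let $P'$ be $P$ with $n+1$ adjoined in cell $(\ell-1,d+1)$ (which does satisfy the condition --- take the last column of rows $1,\dots,\ell-2$ followed by $n+1$), and pair it with $Q':=T_{\mathrm{shape}(P')}$, which is legitimate by Fact A. The resulting $(k,\ell)$-top-right-crucial permutation of length $n+1$ is not in the image of the injection, since deleting $n+1$ from its $P'$ returns a tableau failing the increasing-sequence condition. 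Hence $s^{(tr)}_n(k,\ell)<s^{(tr)}_{n+1}(k,\ell)$ in this range as well.

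The fussiest part will be the bookkeeping in the strictness step: one must check in each regime that the extra permutations genuinely escape the image of the injection (via ``$n+1$ has no element to its left'' in case (i), and ``deleting $n+1$ breaks the condition'' in case (ii)), that the ranges of cases (i) and (ii) together exhaust $[k+\ell-3,(k-1)(\ell-1))$, including the degenerate behaviour when $\ell=3$, and that every freely chosen recording tableau is taken to be the row-filled one of Fact A. All of this is a routine adaptation of the proof of Theorem~\ref{crucial-growth-thm}, so no essentially new idea is required.
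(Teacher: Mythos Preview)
Your proposal is correct and follows essentially the same route as the paper's own argument, which is a one-sentence reduction to the proof of Theorem~\ref{crucial-growth-thm} with the instruction to take the row-filled tableau for $Q$ whenever $Q$ was unconstrained there. Your version simply spells this out: the injection extends both $P$ and $Q$ in the same top corner (your Fact~B handles the $Q$-condition), and in the strictness steps you take the row-filled $Q$ (your Fact~A) exactly where the paper tells you to; the only cosmetic difference is that in case~(i) you phrase the input as a $(k,\ell-1)$-top-right-crucial pair rather than a $(k,\ell-1)$-crucial $P$ paired with a row-filled $Q$, which amounts to the same thing.
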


\section{Characterization of $(k,\ell)$-tricrucial permutations via RSK correspondence}\label{RSK-characterization-tricrucial}

Recall that a permutation is $(k,\ell)$-tricrucial if it is $(k,\ell)$-bicrucial and any of its extensions from above results in a permutation containing an occurrence of $i_k$ or $d_{\ell}$. The following theorem is an immediate corollary of Theorems~\ref{left-top-bottom} and \ref{bicruchar} and the fact that the set of $(k,\ell)$-tricrucial permutations is the intersection of the sets of $(k,\ell)$-bicrucial and $(k,\ell)$-top-crucial permutations. 

\begin{thm}  \label{tricruchar}
Let $\sigma\in S_n$ and $(P,Q)$ be the pair of SYT corresponding to $\sigma$. Then,  $\sigma$ is $(k,\ell)$-tricrucial if and only if the following holds:
\begin{itemize}
\item the number of columns in $P$ (and $Q$) is $k-1$;
\item the number of rows in $P$ (and $Q$) is $\ell-1$;
\item $P$  contains an increasing sequence of elements $p_{1, k-1}, p_{2,j_2}, \ldots, p_{\ell-1, j_{\ell-1}}$ for some $j_2, \ldots, j_{\ell-1}$;
\item $P$  contains an increasing sequence of elements $p_{\ell-1, 1}, p_{i_2,2}, \ldots, p_{i_{k-1}, k-1}$ for some $i_2, \ldots, i_{k-1}$;
\item $Q$  contains an increasing sequence of elements $q_{1, k-1}$, $q_{2,j_2}$, \ldots, $q_{\ell-1, j_{\ell-1}}$ for some $j_2, \ldots, j_{\ell-1}$.
\end{itemize}
\end{thm}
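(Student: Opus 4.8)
The plan is to derive Theorem~\ref{tricruchar} purely as a set-theoretic intersection of characterizations already established, exactly as the preamble to the statement announces. By definition a $(k,\ell)$-tricrucial permutation is a $(k,\ell)$-bicrucial permutation all of whose extensions from above introduce an occurrence of $i_k$ or $d_\ell$; the latter condition is precisely the statement that $\sigma$ is $(k,\ell)$-top-crucial. Hence the set of $(k,\ell)$-tricrucial permutations is $\{(k,\ell)\text{-bicrucial}\}\cap\{(k,\ell)\text{-top-crucial}\}$, and I would open the proof by recording this observation.

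Next I would invoke the two characterizations in hand. Theorem~\ref{bicruchar} says $\sigma$ is $(k,\ell)$-bicrucial iff $P$ (equivalently $Q$) has $k-1$ columns and $\ell-1$ rows, $P$ contains an increasing sequence $p_{1,k-1},p_{2,j_2},\ldots,p_{\ell-1,j_{\ell-1}}$, and $P$ contains an increasing sequence $p_{\ell-1,1},p_{i_2,2},\ldots,p_{i_{k-1},k-1}$. The ``$(k,\ell)$-top-crucial'' clause of Theorem~\ref{left-top-bottom} says $\sigma$ is $(k,\ell)$-top-crucial iff $P$ (equivalently $Q$) has $k-1$ columns and $\ell-1$ rows and $Q$ contains an increasing sequence $q_{1,k-1},q_{2,j_2},\ldots,q_{\ell-1,j_{\ell-1}}$. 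Intersecting these two descriptions, the shape conditions coincide and can be stated once, and the three ``increasing sequence'' conditions — two on $P$ and one on $Q$ — are exactly the last three bullets of Theorem~\ref{tricruchar}. I would note explicitly that the $j_2,\ldots,j_{\ell-1}$ appearing in the $P$-condition and in the $Q$-condition are independent (they are existentially quantified separately), so using the same letters is only cosmetic and carries no hidden constraint; this is the one place a careless reader might think there is something to check.

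Since both directions of Theorems~\ref{cruchar}, \ref{left-top-bottom} and \ref{bicruchar} are ``if and only if'', the intersection argument is itself an ``if and only if'': a permutation lies in the intersection exactly when its tableau pair meets the conjunction of the two bulleted lists, which is the bulleted list in Theorem~\ref{tricruchar}. There is no genuine obstacle here; the only thing to be careful about is the bookkeeping of which condition lives on $P$ and which on $Q$, and making sure the redundant shape conditions are merged rather than duplicated. A one- or two-sentence proof therefore suffices, and I would phrase it as: ``By definition, the set of $(k,\ell)$-tricrucial permutations is the intersection of the set of $(k,\ell)$-bicrucial permutations and the set of $(k,\ell)$-top-crucial permutations. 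The claim now follows by combining the characterization of $(k,\ell)$-bicrucial permutations in Theorem~\ref{bicruchar} with the characterization of $(k,\ell)$-top-crucial permutations in Theorem~\ref{left-top-bottom}, after observing that the shape requirements in the two characterizations agree.''
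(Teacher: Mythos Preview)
Your proposal is correct and is exactly the paper's approach: the paper does not give a separate proof but states just before the theorem that it ``is an immediate corollary of Theorems~\ref{left-top-bottom} and~\ref{bicruchar} and the fact that the set of $(k,\ell)$-tricrucial permutations is the intersection of the sets of $(k,\ell)$-bicrucial and $(k,\ell)$-top-crucial permutations.'' Your added remark that the indices $j_2,\ldots,j_{\ell-1}$ in the $P$-condition and the $Q$-condition are independently existentially quantified is a welcome clarification beyond what the paper makes explicit.
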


\begin{thm}\label{min-tricrucial-char-thm}
For $k > \ell$,  any minimal $(k,\ell)$-tricrucial permutation $\sigma$ is of length $n = k + 2\ell -5$. Moreover, if $(P,Q)$ is a pair of SYT corresponding to $\sigma$, then $P$ is given by 

\begin{center}
\begin{tikzpicture}[scale=0.4]

\draw [line width=0.5](0,5)--(7.3,5);
\draw [line width=0.5](8.7,5)--(12,5);
\draw [line width=0.5](0,4)--(7.3,4);
\draw [line width=0.5](8.7,4)--(12,4);
\draw [line width=0.5](0,3)--(5,3);
\draw [line width=0.5](0,1)--(5,1);
\draw [line width=0.5](0,0)--(5,0);

\draw [line width=0.5](0,2.7)--(0,5);
\draw [line width=0.5](0,0)--(0,1.3);
\draw [line width=0.5](1.6,2.7)--(1.6,5);
\draw [line width=0.5](1.6,0)--(1.6,1.3);
\draw [line width=0.5](5,0)--(5,1.3);
\draw [line width=0.5](5,2.7)--(5,5);
\draw [line width=0.5](7,4)--(7,5);
\draw [line width=0.5](9,4)--(9,5);
\draw [line width=0.5](12,4)--(12,5);

\node [below] at (0.8,5.1){{\tiny $1$}};
\node [below] at (3,5.1){{\tiny $\ell$}};
\node [below] at (6,5.1){{\tiny $\ell+1$}};
\node [below] at (8,5){{\tiny $\cdots$}};
\node [below] at (10.5,5.1){{\tiny $k+\ell-3$}};
\node [below] at (3.1,4.1){{\tiny $k+\ell-2$}};
\node [below] at (0.8,4.1){{\tiny $2$}};
\node [below] at (0.8,3.3){{\tiny $\vdots$}};
\node [below] at (3.2,3.3){{\tiny $\vdots$}};
\node [below] at (0.8,1.1){{\tiny $\ell-1$}};
\node [below] at (3.3,1.1){{\tiny $k+2\ell-5$}};

\end{tikzpicture}
\end{center}
and  $q_{1, k-1} < q_{2,2}$ in  $Q$.

\noindent
For $k=\ell$,  any minimal $(k,k)$-tricrucial permutation $\sigma$ is of length $n = 3k -5$. Moreover, if $(P,Q)$ is a pair of SYT corresponding to $\sigma$, then $P$, or $P^T$, is given by 

\begin{center}
\begin{tikzpicture}[scale=0.4]

\draw [line width=0.5](0,5)--(7.3,5);
\draw [line width=0.5](8.7,5)--(12,5);
\draw [line width=0.5](0,4)--(7.3,4);
\draw [line width=0.5](8.7,4)--(12,4);
\draw [line width=0.5](0,3)--(5,3);
\draw [line width=0.5](0,1)--(5,1);
\draw [line width=0.5](0,0)--(5,0);

\draw [line width=0.5](0,2.7)--(0,5);
\draw [line width=0.5](0,0)--(0,1.3);
\draw [line width=0.5](1.6,2.7)--(1.6,5);
\draw [line width=0.5](1.6,0)--(1.6,1.3);
\draw [line width=0.5](5,0)--(5,1.3);
\draw [line width=0.5](5,2.7)--(5,5);
\draw [line width=0.5](7,4)--(7,5);
\draw [line width=0.5](9,4)--(9,5);
\draw [line width=0.5](12,4)--(12,5);

\node [below] at (0.8,5.1){{\tiny $1$}};
\node [below] at (3,5.1){{\tiny $k$}};
\node [below] at (6,5.1){{\tiny $k+1$}};
\node [below] at (8,5){{\tiny $\cdots$}};
\node [below] at (10.5,5.1){{\tiny $2k-3$}};
\node [below] at (3.1,4.1){{\tiny $2k-2$}};
\node [below] at (0.8,4.1){{\tiny $2$}};
\node [below] at (0.8,3.3){{\tiny $\vdots$}};
\node [below] at (3.2,3.3){{\tiny $\vdots$}};
\node [below] at (0.8,1.1){{\tiny $k-1$}};
\node [below] at (3.3,1.1){{\tiny $3k-5$}};

\end{tikzpicture}
\end{center}
and in $Q$, $q_{1, k-1} < q_{2,2}$ in the former case and $q_{1,k-1} < q_{2,j} < q_{3,1}$, for some $j \in \{1, \ldots k-1 \}$, in the latter case. 

\end{thm}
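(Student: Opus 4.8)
The plan is to exploit that a $(k,\ell)$-tricrucial permutation is exactly one that is simultaneously $(k,\ell)$-bicrucial and $(k,\ell)$-top-crucial, and to combine the structural description of minimal bicrucial permutations (Theorems~\ref{thm-min-bicrucial-structure} and~\ref{min-bicrucial-char-thm}) with the top-crucial characterization (Theorem~\ref{left-top-bottom}). First I would fix the length: every tricrucial permutation is bicrucial, so by Theorem~\ref{thm-min-bicrucial-structure} its length is at least $k+2\ell-5$. To see this bound is tight, take $P$ to be the forced tableau $T$ of Theorem~\ref{min-bicrucial-char-thm} (which, corresponding to a minimal bicrucial permutation, already satisfies the bicrucial conditions of Theorem~\ref{bicruchar}), and let $Q$ be the SYT of the same shape filled in reading order; then $q_{1,k-1}=k-1<k+1=q_{2,2}$, so $q_{1,k-1},q_{2,2},q_{3,2},\ldots,q_{\ell-1,2}$ is an increasing sequence with one cell in each row, whence by Theorem~\ref{tricruchar} the associated permutation is $(k,\ell)$-tricrucial of length $k+2\ell-5$. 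Thus the minimal length is exactly $k+2\ell-5$ (equal to $3k-5$ when $k=\ell$), and any minimal tricrucial permutation is a minimal bicrucial permutation; hence its insertion tableau $P$ equals $T$ when $k>\ell$, and equals $T$ or $T^T$ when $k=\ell$, by Theorem~\ref{min-bicrucial-char-thm}.

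What is left is to read off, for each admissible $P$, exactly which recording tableaux $Q$ make the permutation tricrucial, and the point is that fixing $P$ fixes the common shape of $P$ and $Q$. By Theorems~\ref{tricruchar} and~\ref{left-top-bottom}, beyond the bicrucial conditions the only additional requirement is that $Q$ contain an increasing sequence $q_{1,k-1},q_{2,j_2},\ldots,q_{\ell-1,j_{\ell-1}}$. When $P=T$, the shape is $(k-1,2,2,\ldots,2)$ with $\ell-1$ rows, so each $j_i\in\{1,2\}$; since $q_{2,1}<q_{2,2}<q_{3,2}<\cdots<q_{\ell-1,2}$ in any SYT, I claim such a sequence exists if and only if $q_{1,k-1}<q_{2,2}$. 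Indeed, the second term of any such sequence is at most $q_{2,2}$, forcing $q_{1,k-1}<q_{2,2}$; conversely $q_{1,k-1},q_{2,2},q_{3,2},\ldots,q_{\ell-1,2}$ works when $q_{1,k-1}<q_{2,2}$. This settles the statement when $k>\ell$ and in the ``$P=T$'' case when $k=\ell$.

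For the remaining case $k=\ell$ with $P^T=T$, I would first identify the shape of $P$: conjugating $(k-1,2^{k-2})$ yields $(k-1,k-1,1^{k-3})$. In this shape rows $3,4,\ldots,k-1$ are single boxes in column $1$, so in $q_{1,k-1},q_{2,j_2},q_{3,j_3},\ldots,q_{k-1,j_{k-1}}$ one must take $j_3=\cdots=j_{k-1}=1$; the chain $q_{3,1}<q_{4,1}<\cdots<q_{k-1,1}$ is automatic in a SYT, so such an increasing sequence exists precisely when $q_{1,k-1}<q_{2,j}<q_{3,1}$ for some $j\in\{1,\ldots,k-1\}$ (when $k=3$ there is no third row and the condition degenerates to $q_{1,2}<q_{2,j}$, which holds in every $2\times2$ SYT) --- this is exactly the condition in the theorem. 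Taking a concrete reading-order $Q$, say row $1$ equal to $1,\ldots,k-1$, row $2$ equal to $k,\ldots,2k-2$, and column $1$ continued by $2k-1,\ldots,3k-5$, shows this case is non-empty, so both described families of minimal $(k,k)$-tricrucial permutations genuinely occur.

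The main obstacle is not conceptual but careful bookkeeping: after fixing $P$ one must be sure the only remaining freedom is in $Q$, then translate ``an increasing sequence with one entry per row starting at $q_{1,k-1}$'' into the quoted inequalities for each of the (now very constrained) shapes --- in particular correctly computing the conjugate shape $(k-1,k-1,1^{k-3})$ for the transpose case, handling the $k=3$ degeneracy, and verifying that the auxiliary tableaux exhibited are genuinely standard.
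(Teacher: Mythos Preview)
Your proposal is correct and follows essentially the same route as the paper: reduce to the minimal bicrucial description via Theorem~\ref{min-bicrucial-char-thm}, then impose the top-crucial condition on $Q$ from Theorem~\ref{left-top-bottom} and simplify that sequence condition to the stated inequalities using the constrained shape. You are actually more careful than the paper in two respects: you explicitly exhibit a $Q$ (reading order) to confirm the minimal length is attained before concluding that minimal tricrucial permutations coincide with minimal bicrucial ones, and you compute the conjugate shape $(k-1,k-1,1^{k-3})$ in the $P^T=T$ case rather than leaving it implicit.
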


\begin{proof}
Any tricrucial permutation is a bicrucial permutation, and in Theorem~\ref{min-bicrucial-char-thm} we have a characterization for $(k,\ell)$-bicrucial permutations, which gives, in particular, the claimed shape and filling of $P$ and the shape of $Q$. 

By Theorem~\ref{left-top-bottom}, a $(k,\ell)$-bicrucial permutation is $(k,\ell)$-top-crucial (and, consequently, $(k,\ell)$-tricrucial) if the corresponding SYT $Q$ contains an increasing sequence of elements $q_{1, k-1}$, $q_{2,j_2}$, \ldots, $q_{\ell-1, j_{\ell-1}}$ for some $j_2, \ldots, j_{\ell-1}$.

If $k>\ell$, then SYT $Q$ has the unique shape (presented in the statement of the theorem). Since in any SYT every column is increasing, the presence of an increasing sequence of elements $q_{1, k-1}$, $q_{2,j_2}$, \ldots, $q_{\ell-1, j_{\ell-1}}$ is equivalent to the requirement  $q_{1, k-1} < q_{2,2}$.

If $k = \ell$ then there is another possible shape of $Q$ (the transposition of the shape in the statement of the theorem). In this case, to guarantee the presence of an increasing sequence of elements $q_{1, k-1}$, $q_{2,j_2}$, \ldots, $q_{\ell-1, j_{\ell-1}}$ it is sufficient to check that $q_{1,k-1} < q_{2,j} < q_{3,1}$  for some $j \in \{1, \ldots k-1 \}$.
\end{proof}

\begin{cor}\label{min-k3-tricrucial-count-cor} 
For $k> 3$ (resp., $k=3$), the number of minimal $(k,3)$-tricrucial permutations is $k-1$ (resp., $4$). 
\end{cor}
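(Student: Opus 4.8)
The plan is to push the count of minimal $(k,3)$-tricrucial permutations through the RSK bijection, using Theorem~\ref{min-tricrucial-char-thm} as the characterization, and to treat $k>3$ and $k=3$ separately. One preliminary remark is needed: Theorem~\ref{min-tricrucial-char-thm} is phrased as a list of necessary conditions on the pair $(P,Q)$, so I would first record the (easy) converse, namely that every $(P,Q)$ meeting those conditions does come from a minimal $(k,3)$-tricrucial permutation. This is checked by feeding the displayed $P$ back into Theorem~\ref{bicruchar} (it clearly contains both required increasing chains, so $\sigma$ is $(k,3)$-bicrucial) and observing that the stated constraint on $Q$ is exactly the hypothesis of Theorem~\ref{left-top-bottom} for $(k,3)$-top-cruciality, so that $\sigma$ is tricrucial and, having the minimal length, is minimal.

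For $k>3$ we have $\ell=3$, the minimal length is $n=k+2\ell-5=k+1$, and by Theorem~\ref{min-tricrucial-char-thm} the tableau $P$ is \emph{uniquely} the standard Young tableau of shape $(k-1,2)$ with first row $1,3,4,\dots,k$ and second row $2,k+1$; hence only the choices of $Q$ have to be counted, namely the standard Young tableaux of shape $(k-1,2)$ with $q_{1,k-1}<q_{2,2}$. The key observation is that the maximal entry $n=k+1$ of such a $Q$ must occupy an outer corner, and the only corners of $(k-1,2)$ are $(1,k-1)$ and $(2,2)$; the inequality $q_{1,k-1}<q_{2,2}$ rules out $q_{1,k-1}=k+1$, so necessarily $q_{2,2}=k+1$. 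Deleting that cell leaves the hook shape $(k-1,1)$ to be filled by $1,\dots,k$, and such a filling is pinned down by the single entry placed in cell $(2,1)$, which may be any of $2,3,\dots,k$ (equivalently, the hook-length formula gives $k-1$); each resulting $Q$ automatically satisfies $q_{1,k-1}\le k<k+1=q_{2,2}$. Thus there are $1\cdot(k-1)=k-1$ admissible pairs $(P,Q)$, i.e.\ exactly $k-1$ minimal $(k,3)$-tricrucial permutations.

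For $k=3$ the tableaux in Theorem~\ref{min-tricrucial-char-thm} degenerate to $2\times 2$ and the condition involving $q_{3,1}$ becomes vacuous, so instead of pushing the tableau bookkeeping through this boundary case I would argue by a length coincidence: the minimal length $3k-5=4$ equals the maximal length $(k-1)(\ell-1)=4$ given by the Erd\H{o}s--Szekeres theorem. Hence every $(3,3)$-crucial permutation of length $4$ is simultaneously minimal and maximal, and, being maximal, any of its extensions (to the right, to the left, from above, or from below) introduces an occurrence of $i_3$ or $d_3$; in particular every such permutation is $(3,3)$-tricrucial. The permutations of length $4$ avoiding $123$ and $321$ are precisely the Erd\H{o}s--Szekeres extremal permutations, of which there are $4$ by formula~(\ref{Stanley-formula}) with $k=\ell=3$ (equivalently, $2^2$ pairs of $2\times 2$ standard Young tableaux). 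Therefore there are exactly $4$ minimal $(3,3)$-tricrucial permutations.

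The arithmetic in both cases is short; the only step requiring genuine care is the converse direction of Theorem~\ref{min-tricrucial-char-thm} flagged at the outset (making sure the counted tableau pairs really are minimal tricrucial permutations and that none is missed). For $k>3$ one can even bypass this by sandwiching $s^{(tri)}_{k+1}(k,3)$ between the lower bound $k-1$ coming from the $k-1$ pairs explicitly exhibited above and the upper bound $k-1$ coming from the necessity part of Theorem~\ref{min-tricrucial-char-thm}.
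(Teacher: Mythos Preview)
Your proof is correct and, for $k>3$, follows essentially the same route as the paper: both invoke Theorem~\ref{min-tricrucial-char-thm} to pin down $P$ uniquely and then observe that the constraint $q_{1,k-1}<q_{2,2}$ forces $q_{2,2}=k+1$, leaving $k-1$ choices for the hook $(k-1,1)$. Your explicit remark about the converse direction of Theorem~\ref{min-tricrucial-char-thm} is a useful piece of hygiene that the paper's proof leaves implicit.

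For $k=3$ you take a slightly different path. The paper stays inside the tableau framework, noting that both standard Young tableaux of shape $2\times 2$ satisfy the conditions for $P$ and for $Q$, giving $2\cdot 2=4$. You instead exploit the length coincidence $3k-5=(k-1)(\ell-1)=4$ and invoke the Erd\H{o}s--Szekeres bound to conclude that every $\{123,321\}$-avoider of length $4$ is automatically tricrucial, then count these via formula~(\ref{Stanley-formula}). Both arguments are short and valid; yours has the mild advantage of sidestepping the edge-case bookkeeping in Theorem~\ref{min-tricrucial-char-thm} (where the $q_{3,1}$ condition becomes vacuous), while the paper's argument keeps the two cases methodologically uniform.
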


\begin{proof} Referring to Theorem~\ref{min-tricrucial-char-thm}, we derive the following facts. For $k\geq 3$, the length of permutations in question is $k+1$. For $k>3$, we have the unique choice for $P$, and in $Q$ (having two rows) we must have $q_{1,1}=1$ and $q_{2,2}=k+1$ (the maximum element), and any of the remaining $k-1$ elements can be in square $q_{2,1}$ hence giving the result. If $k=3$, then both SYTs of the $2\times 2$ shape are valid choices for $P$ and $Q$, hence giving in total 4 choices, as required.   \end{proof}

Following the same arguments as in the proof of Theorem~\ref{bicrucial-growth-thm} related to $(k,\ell)$-bicrucial permutations, except always choosing for $Q$  the SYT of the form in question obtained by filling in squares with consecutively increasing numbers from left to right and from top to bottom (that would satisfy the condition on $Q$ in Theorem~\ref{tricruchar}), we obtain the following result.

\begin{thm}\label{tricrucial-growth-thm} For $k+\ell-3\leq n < (k-1)(\ell-1)$, we have the following monotone property for the number of tricrucial permutations:
\begin{equation}\label{monot-tricru}s^{(tri)}_n(k,\ell)< s^{(tri)}_{n+1}(k,\ell).\end{equation} 
\end{thm}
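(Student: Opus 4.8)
The plan is to mimic, essentially verbatim, the two-step strategy used for Theorems~\ref{crucial-growth-thm} and~\ref{bicrucial-growth-thm}: first build an injection from $(k,\ell)$-tricrucial permutations of length $n$ into those of length $n+1$, and then exhibit, for every $n$ in the range, one extra $(k,\ell)$-tricrucial permutation of length $n+1$ that is not in the image. Throughout we work on the RSK side via the characterization of Theorem~\ref{tricruchar}. Given a $(k,\ell)$-tricrucial $\sigma\in S_n$ with pair $(P,Q)$, the common shape $\lambda$ has $k-1$ columns and $\ell-1$ rows, and since $n<(k-1)(\ell-1)$ it is a proper subdiagram of the $(\ell-1)\times(k-1)$ rectangle; hence $\lambda$ has at least one addable cell that still fits inside that rectangle. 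Fix a deterministic choice of such a cell (for definiteness, the one in the topmost row, exactly as in the proof of Theorem~\ref{crucial-growth-thm}) and adjoin $n+1$ to both $P$ and $Q$ in that cell, obtaining a pair $(P',Q')$ of a common shape $\lambda'$ with $n+1$ squares, $k-1$ columns and $\ell-1$ rows, corresponding to a permutation $\sigma'\in S_{n+1}$.

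The point is that all five conditions of Theorem~\ref{tricruchar} pass from $(P,Q)$ to $(P',Q')$. The column and row counts are unchanged because the new cell lies inside the $(\ell-1)\times(k-1)$ rectangle. Each of the three increasing-chain conditions — the two on $P$ and the one on $Q$ — concerns entries in a fixed set of positions belonging to $\lambda$, none of which is the new (addable, hence previously empty) cell, so those entries are literally the same in $P'$ (resp.\ $Q'$) as in $P$ (resp.\ $Q$); in particular the chains survive. Hence $\sigma'$ is $(k,\ell)$-tricrucial. Since $n+1$ is the maximum entry and occupies a corner of $\lambda'$, deleting its cell from $P'$ and $Q'$ recovers $(P,Q)$ unambiguously, so $\sigma\mapsto\sigma'$ is injective and $s^{(tri)}_n(k,\ell)\le s^{(tri)}_{n+1}(k,\ell)$.

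For strictness I would reproduce the three regimes from the alternative/strictness part of the proofs of Theorems~\ref{crucial-growth-thm} and~\ref{bicrucial-growth-thm}, in each case constructing a suitable $P'$ and then completing it with a valid recording tableau. When $n$ is bounded away from both ends of the range, take $P'$ to be a tableau for a minimal $(k,\ell)$-bicrucial permutation enlarged by adjoining successive largest elements in corners (this keeps both $P$-chains of Theorem~\ref{tricruchar}, as in the proof of Theorem~\ref{bicrucial-growth-thm}), arranged so that $P'$ has at least two removable corners, and then place $n+1$ in a corner other than the distinguished one used by the injection. For $n\in\{(k-1)(\ell-1)-2,(k-1)(\ell-1)-1\}$ one reuses the case~(ii) construction of Theorem~\ref{crucial-growth-thm} for $P'$, and for $n$ equal to the minimal admissible length one reuses the element-swap trick from the proof of Theorem~\ref{bicrucial-growth-thm} that moves $n$ off a corner and drops $n+1$ into a corner of the bottom row. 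In every case the resulting permutation is not in the image of the injection, for exactly the reason identified in those proofs (the cell of $n+1$ is not the distinguished addable corner, or removing $n+1$ fails to return a tableau with the required chain). The one new ingredient is the choice of $Q'$: whereas the bicrucial argument was free to pick any SYT of the right shape, condition on $Q$ in Theorem~\ref{tricruchar} constrains $Q'$, so I would always take $Q'$ to be the canonical tableau obtained by writing $1,2,\dots,n+1$ into the cells of $\lambda'$ from left to right and top to bottom; there the first entry of each row exceeds every entry of the preceding row, so $q'_{1,k-1}=k-1<q'_{2,1}<q'_{3,1}<\cdots<q'_{\ell-1,1}$ is an increasing chain of the required form.

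The main obstacle is the bookkeeping in the strictness step rather than any conceptual difficulty: one must check, regime by regime, that the canonical row-filled $Q'$ has exactly the shape of the $P'$ being built (so that $(P',Q')$ really is a valid RSK pair) and satisfies the $Q$-condition, and that the produced length-$(n+1)$ permutation genuinely lies outside the image of the injection. All of this is routine given the corresponding verifications already carried out for Theorems~\ref{crucial-growth-thm} and~\ref{bicrucial-growth-thm}; the essential new observation — that a consecutive row-filling always realizes the $Q$-condition of Theorem~\ref{tricruchar} — is precisely the content of the parenthetical remark preceding the theorem statement.
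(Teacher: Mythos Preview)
Your proposal is correct and follows essentially the same route as the paper's own proof. The paper's argument is the single sentence immediately preceding the theorem: reuse the proof of Theorem~\ref{bicrucial-growth-thm} verbatim on the $P$-side, and wherever that proof left the choice of $Q$ free (namely, in the strictness constructions), take the row-reading SYT, which automatically satisfies the $Q$-chain condition of Theorem~\ref{tricruchar}. This is exactly your two-step plan, and your explicit verification that the row-filled $Q'$ realizes the chain $q'_{1,k-1}<q'_{2,1}<\cdots<q'_{\ell-1,1}$ is precisely the parenthetical observation the paper invokes.
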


\section{Characterization of $(k,\ell)$-quadrocrucial permutations via the RSK correspondence}\label{RSK-characterization-quadrocrucial}

Recall that a permutation is $(k,\ell)$-quadrocrucial if it is $(k,\ell)$-tricrucial and any extension from below results in a permutation containing an occurrence of $i_k$ or $d_{\ell}$. The following theorem is an immediate corollary of Theorems~\ref{left-top-bottom} and \ref{tricruchar} and the fact that the set of $(k,\ell)$-bicrucial permutations is the intersection of the sets of $(k,\ell)$-tricrucial and $(k,\ell)$-bottom-crucial permutations. 

\begin{thm} \label{quadrocruchar}
Let $\sigma\in S_n$ and $(P,Q)$ be the pair of SYT corresponding to $\sigma$. Then,  $\sigma$ is $(k,\ell)$-quadrocrucial if and only if the following holds:
\begin{itemize}
\item the number of columns in $P$ (and $Q$) is $k-1$;
\item the number of rows in $P$ (and $Q$) is $\ell-1$;
\item $P$  contains an increasing sequence of elements $p_{1, k-1}, p_{2,j_2}, \ldots, p_{\ell-1, j_{\ell-1}}$ for some $j_2, \ldots, j_{\ell-1}$;
\item $P$  contains an increasing sequence of elements $p_{\ell-1, 1}, p_{i_2,2}, \ldots, p_{i_{k-1}, k-1}$ for some $i_2, \ldots, i_{k-1}$;
\item $Q$  contains an increasing sequence of elements $q_{1, k-1}$, $q_{2,j_2}$, \ldots, $q_{\ell-1, j_{\ell-1}}$ for some $j_2, \ldots, j_{\ell-1}$;
\item $Q$  contains an increasing sequence of elements $q_{\ell-1, 1}$,  $q_{i_2,2},$ \ldots, $q_{i_{k-1}, k-1}$ for some $i_2, \ldots, i_{k-1}$.
\end{itemize}
\end{thm}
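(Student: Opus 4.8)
The plan is to mirror the derivations of Theorems~\ref{bicruchar}, \ref{top-right-cruchar} and \ref{tricruchar}: express the class of $(k,\ell)$-quadrocrucial permutations as an intersection of already-characterized classes, and then simply superimpose the conditions. Concretely, I would first observe that, by definition, a $(k,\ell)$-quadrocrucial permutation is a $(k,\ell)$-tricrucial permutation all of whose extensions from below contain an occurrence of $i_k$ or $d_\ell$; since every tricrucial permutation already avoids $i_k$ and $d_\ell$, this last requirement is exactly the defining property of a $(k,\ell)$-bottom-crucial permutation. Hence the set of $(k,\ell)$-quadrocrucial permutations is precisely the intersection of the set of $(k,\ell)$-tricrucial permutations with the set of $(k,\ell)$-bottom-crucial permutations (this is the fact flagged just before the statement of the theorem).

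Next I would invoke Theorem~\ref{tricruchar} to translate membership in the tricrucial class into the first five bullet conditions: the shape constraints ($k-1$ columns and $\ell-1$ rows in $P$ and $Q$), the two increasing sequences in $P$ (one anchored at $p_{1,k-1}$, one anchored at $p_{\ell-1,1}$), and the increasing sequence in $Q$ anchored at $q_{1,k-1}$. Then I would invoke the bottom-crucial case of Theorem~\ref{left-top-bottom}, which contributes exactly the one remaining ingredient: that $Q$ contains an increasing sequence $q_{\ell-1, 1}, q_{i_2,2}, \ldots, q_{i_{k-1}, k-1}$ for some $i_2,\ldots,i_{k-1}$ (the shape conditions it also imposes are already present). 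Taking the conjunction of these two lists of conditions yields precisely the six bullet points in the statement, and since each of the invoked characterizations is an ``if and only if'', both implications follow at once.

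The only point that needs a line of care — and the ``main obstacle'', such as it is — is the equivalence ``tricrucial and non-extendable from below $\iff$ tricrucial and bottom-crucial'': one must check that a permutation which already avoids $i_k$ and $d_\ell$ is $(k,\ell)$-bottom-crucial exactly when none of its extensions from below ($i$-th extension for $1\le i\le n+1$, in the sense of Section~\ref{prelim-sec}) avoids $i_k$ and $d_\ell$. This is immediate from the definitions, but it is the hinge of the argument. Everything else is a bookkeeping merge of the conditions already established in Theorems~\ref{tricruchar} and \ref{left-top-bottom}, so no new RSK computation is required, which is why the theorem is stated as an immediate corollary.
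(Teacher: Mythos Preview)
Your proposal is correct and follows essentially the same route as the paper: the paper also states the theorem as an immediate corollary of Theorems~\ref{left-top-bottom} and~\ref{tricruchar} together with the observation that the $(k,\ell)$-quadrocrucial permutations are exactly the intersection of the $(k,\ell)$-tricrucial and $(k,\ell)$-bottom-crucial permutations. Your additional remark about the hinge equivalence is accurate but, as you note, immediate from the definitions.
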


\begin{thm}\label{minimal-length-quadro} For $k\geq 3$ and $\ell\geq 2$, minimal $(k,\ell)$-quadrocrucial permutations are of length $k+2\ell-5$. \end{thm}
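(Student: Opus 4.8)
The plan is to establish the bound ``length $\ge k+2\ell-5$'' and then to exhibit one $(k,\ell)$-quadrocrucial permutation of length exactly $k+2\ell-5$; the first half is essentially a citation, and the second half is the real content. (As in the earlier bicrucial and tricrucial results, we read the statement under the convention $k\ge\ell$; the case $k<\ell$ reduces to it by reversing permutations, which turns a $(k,\ell)$-quadrocrucial permutation into an $(\ell,k)$-quadrocrucial one.)

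For the lower bound, note that a $(k,\ell)$-quadrocrucial permutation is by definition $(k,\ell)$-tricrucial, hence $(k,\ell)$-bicrucial. When $\ell\ge 3$, Theorem~\ref{thm-min-bicrucial-structure} gives that such a permutation has length at least $k+2\ell-5$. When $\ell=2$, avoiding $d_2$ forces the permutation to be increasing, hence an identity $12\cdots m$, and $12\cdots m$ is not $(k,2)$-right-crucial unless $m\ge k-1=k+2\ell-5$; so the bound holds here too.

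For the construction, the key observation is that in Theorem~\ref{quadrocruchar} the two conditions imposed on $Q$ are identical in form to the two conditions imposed on $P$. Consequently it suffices to produce a single standard Young tableau $P$ of a shape with $k-1$ columns, $\ell-1$ rows and $k+2\ell-5$ cells that contains both an increasing sequence $p_{1,k-1}<p_{2,j_2}<\cdots<p_{\ell-1,j_{\ell-1}}$ and an increasing sequence $p_{\ell-1,1}<p_{i_2,2}<\cdots<p_{i_{k-1},k-1}$; then the pair $(P,P)$ is the RSK image of the (necessarily involutive) permutation $\sigma$ with $P(\sigma)=P$, and Theorem~\ref{quadrocruchar} certifies that $\sigma$ is $(k,\ell)$-quadrocrucial. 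I would take $P$ to be the tableau already occurring in Theorem~\ref{min-bicrucial-char-thm}, of shape $\lambda=(k-1,2^{\ell-2})$: its first row is $1,\ell,\ell+1,\dots,k+\ell-3$, its first column is $1,2,\dots,\ell-1$, and its second column is $\ell,k+\ell-2,k+\ell-1,\dots,k+2\ell-5$ (when $\ell=2$ only the single first row $1,2,\dots,k-1$ remains). A direct check shows this is a legal SYT, that $p_{1,k-1}=k+\ell-3<k+\ell-2=p_{2,2}<p_{3,2}<\cdots<p_{\ell-1,2}$ is the first required increasing sequence, and that $p_{\ell-1,1}=\ell-1<\ell=p_{1,2}<p_{1,3}<\cdots<p_{1,k-1}$ is the second. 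Since $\lambda$ has $(k-1)+2(\ell-2)=k+2\ell-5$ cells, $\sigma$ has the desired length, and combining the two halves finishes the proof.

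I do not anticipate a genuine obstacle: the lower bound follows from the inclusion of permutation classes together with Theorem~\ref{thm-min-bicrucial-structure}, and the construction is all but forced once one spots the $P/Q$ symmetry in Theorem~\ref{quadrocruchar} and decides to look among involutions (taking $Q=P$). The only delicate points are the degenerate parameter ranges ($\ell=2$, and $k$ small), where $\lambda$ collapses to a single row and Theorem~\ref{min-bicrucial-char-thm} is not literally stated; these are dispatched by the same explicit tableau $P$, whose two ``increasing sequence'' conditions then hold trivially.
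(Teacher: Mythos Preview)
Your proof is correct. The lower bound is handled the same way in both (citing the bicrucial/tricrucial minimal length, with the implicit convention $k\ge\ell$), but the construction is genuinely different: the paper writes down the explicit permutation
\[
(\ell-1)(\ell-2)\cdots 1\,\ell(\ell+1)\cdots(\ell+k-4)\,(k+2\ell-5)(k+2\ell-6)\cdots(k+\ell-3)
\]
and verifies by hand, in four bullet points, that every right/left/top/bottom extension introduces $i_k$ or $d_\ell$. You instead exploit the $P\leftrightarrow Q$ symmetry in Theorem~\ref{quadrocruchar} to reduce to producing a single SYT satisfying the bicrucial conditions, take the tableau already supplied by Theorem~\ref{min-bicrucial-char-thm}, and let $\sigma$ be the involution with $P(\sigma)=Q(\sigma)=P$. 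Your route is cleaner once the RSK characterization is in hand and avoids the four extension checks; the paper's route is more self-contained and yields the permutation explicitly without inverting RSK. (Incidentally, the involution you construct is exactly the permutation the paper writes down, so the two constructions coincide even if the verifications do not.)
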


\begin{proof} Any $(k,\ell)$-quadrocrucial permutation is $(k,\ell)$-tricrucial, and hence, by Theorem~\ref{min-tricrucial-char-thm}, the length of a minimal $(k,\ell)$-quadrocrucial permutation is $\geq  k+2\ell-5$. To complete the proof, we construct a  $(k,\ell)$-quadrocrucial permutation of length $k+2\ell-5$. Consider the permutation $\sigma=\sigma_1\cdots\sigma_{k+2\ell-5}=$
\begin{equation}\label{min-quadrocrucial-perm}
(\ell-1)(\ell-2)\cdots 1\ell(\ell+1)\cdots (\ell+k-4)(k+2\ell-5)(k+2\ell-6)\cdots(k+\ell-3)
\end{equation}
having the $\ell-1$ smallest and $\ell-1$ largest elements consecutively in decreasing order, and the remaining $k-3$ elements in the middle in increasing order. 

Clearly, $\sigma$ avoids $i_k$ and $d_{\ell}$. Moreover, extending $\sigma$ 
\begin{itemize} 
\item to the right by an element $\leq \ell+k-3$  (resp., $>\ell+k-3$) introduces $d_{\ell}$ (resp., $i_k$) involving the elements $k+2\ell-5$, $k+2\ell-6,\ldots,\ell+k-3$ (resp., $\ell-1$, $\ell,\ldots,\ell+k-3$) in $\sigma$;
\item to the left by an element $> \ell-1$  (resp., $\leq \ell-1$) introduces $d_{\ell}$ (resp., $i_k$) involving the elements $1$, $2,\ldots,\ell-1$ (resp., $\ell-1$, $\ell,\ldots,k+\ell-3$) in $\sigma$;
\item from above to the left (resp., right) of the element $k+2\ell-5$  introduces $d_{\ell}$ (resp., $i_k$) involving the elements $k+\ell-3$, $k+\ell-2,\ldots,k+2\ell-5$ (resp., $\ell-1$, $\ell,\ldots,\ell+k-4$, $k+2\ell-5$) in $\sigma$;
\item from below to the right (resp., left) of the element $1$  introduces $d_{\ell}$ (resp., $i_k$) involving the elements $1$, $2,\ldots,\ell-1$ (resp., $1$, $\ell,\ell+1,\ldots,\ell+k-4$, $k+2\ell-5$) in $\sigma$.
\end{itemize}
Therefore, $\sigma$ is (minimal) $(k,\ell)$-quadrocrucial.
\end{proof}

In fact, Theorem~\ref{minimal-length-quadro} is a direct corollary of the following theorem. 

\begin{thm}\label{min-kl-quadro}
There is a unique minimal $(k,3)$-quadrocrucial permutation for any $k> 3$, and there are four minimal $(3,3)$-quadrocrucial permutations. Moreover, there are $1+\delta_{k,\ell}$ minimal $(k,\ell)$-quadrocrucial permutations for $k,\ell>3$, where $\delta_{k,\ell}$ is the Kronecker delta.
\end{thm}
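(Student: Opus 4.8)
The plan is to read everything off the RSK characterisation in Theorem~\ref{quadrocruchar}, using the rigidity of minimal bicrucial permutations already established in Theorem~\ref{min-bicrucial-char-thm}. I would assume $k\ge\ell$ throughout; the case $k<\ell$ (in particular $(3,\ell)$ with $\ell>3$) reduces to it by taking reverses, which interchanges $(k,\ell)$ with $(\ell,k)$ and swaps the four admissible extension directions pairwise, hence gives a bijection between $(k,\ell)$- and $(\ell,k)$-quadrocrucial permutations. By Theorem~\ref{minimal-length-quadro}, a minimal $(k,\ell)$-quadrocrucial permutation $\sigma$ has length $n=k+2\ell-5$; let $(P,Q)$ be its RSK pair. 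By Theorem~\ref{quadrocruchar}, $P$ and $Q$ are SYT with $k-1$ columns, $\ell-1$ rows and $k+2\ell-5$ cells, and \emph{each} of $P$ and $Q$ contains both a ``top-right'' increasing sequence $p_{1,k-1},p_{2,j_2},\dots,p_{\ell-1,j_{\ell-1}}$ and a ``bottom-left'' increasing sequence $p_{\ell-1,1},p_{i_2,2},\dots,p_{i_{k-1},k-1}$.

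The key observation is that these two conditions are exactly the ones Theorem~\ref{bicruchar} places on the insertion tableau of a $(k,\ell)$-bicrucial permutation. Consequently, if $R$ is \emph{any} SYT with $k-1$ columns, $\ell-1$ rows and $k+2\ell-5$ cells containing both such sequences, then pairing $R$ with an arbitrary SYT of its shape produces a $(k,\ell)$-bicrucial permutation of length $k+2\ell-5$, which is minimal by Theorem~\ref{thm-min-bicrucial-structure}. Hence Theorem~\ref{min-bicrucial-char-thm} forces $R$ to equal the explicit tableau $T$ displayed there when $k>\ell$, and $R\in\{T,T^T\}$ when $k=\ell$. Applying this to $P$ and to $Q$ and recalling that $P$ and $Q$ always have the same shape, I conclude that the minimal $(k,\ell)$-quadrocrucial permutations are precisely those whose RSK pair $(P,Q)$ consists of admissible tableaux of a common shape.

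It then remains to count such pairs. If $k>\ell$ (which includes $(k,3)$ for $k>3$), the only option is $(P,Q)=(T,T)$, so there is a unique permutation. If $k=\ell>3$, the shapes of $T$ and $T^T$ are conjugate partitions that are distinct, so the admissible pairs are $(T,T)$ and $(T^T,T^T)$, giving $1+\delta_{k,\ell}=2$ permutations. If $k=\ell=3$, the common shape of $T$ and $T^T$ is the self-conjugate $2\times2$ square, and moreover every $2\times2$ SYT trivially satisfies both conditions (the corner entry $4$ exceeds both $p_{1,2}$ and $p_{2,1}$), so $P$ and $Q$ may be chosen independently among the two $2\times2$ SYT, yielding all four pairs and hence four minimal $(3,3)$-quadrocrucial permutations.

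The step I expect to require the most care is the final shape-matching: one must verify that for $k=\ell>3$ the tableaux $T$ and $T^T$ genuinely have different shapes, so mixed pairs are impossible, and contrast this with $k=\ell=3$, where the $2\times2$ shape is self-conjugate and, unlike the shapes occurring for larger $k=\ell$, imposes no real constraint on the entries — this is precisely what inflates the count from $2$ to $4$. Everything else is bookkeeping: the tableau rigidity is already packaged into Theorem~\ref{min-bicrucial-char-thm}, and the identification ``the quadrocrucial conditions are the bicrucial conditions on $P$ together with the same conditions on $Q$'' makes that packaging directly applicable.
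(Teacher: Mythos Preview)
Your proposal is correct and follows essentially the same approach as the paper: both reduce to Theorem~\ref{min-bicrucial-char-thm} via the observation that the quadrocrucial RSK constraints are exactly the bicrucial constraints imposed on $P$ and on $Q$ simultaneously, forcing $P=Q=T$ (or $T^T$ when $k=\ell$). The only cosmetic difference is the $k=\ell=3$ case, where the paper invokes formula~(\ref{Stanley-formula}) (since minimal length $=4=(k-1)(\ell-1)$ there), while you stay within the RSK picture and observe that the $2\times2$ shape is self-conjugate with both SYT admissible; your treatment is arguably cleaner and your justification that $Q$ is uniquely determined is more explicit than the paper's.
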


\begin{proof} Since for $k=\ell=3$, $k+2\ell-5=(k-1)(\ell-1)$, by Theorem~\ref{minimal-length-quadro} minimal $(3,3)$-quadrocrucial permutations are precisely maximal $(3,3)$-quadrocrucial permutations, and by (\ref{Stanley-formula}), there are $\left(\frac{4!}{1^1\cdot 2^2 \cdot 3^1}\right)^2=4$ such permutations.

Now, assume that $k>\ell\geq 3$. Any quadrocrucial permutation is necessarily bicrucial, and hence Theorem~\ref{min-bicrucial-char-thm} can be applied to see that there is a unique choice for $P$. Moreover, the conditions on $Q$ for the shape of $P$ in Theorem~\ref{quadrocruchar} imply the unique choice of $Q$ (in fact, $P=Q$), which gives the desired result. Finally, if $k=\ell> 3$ then again Theorem~\ref{min-bicrucial-char-thm} can be applied to see that there are two choices for $P$, for each of which there is a unique choice of $Q$ by Theorem~\ref{quadrocruchar} (again, $P=Q$), that completes our proof.
\end{proof}

Following the same arguments as in the proof of Theorem~\ref{bicrucial-growth-thm} related to $(k,\ell)$-bicrucial permutations, but instead of choosing any $Q$ always mimicking on $Q$ the ways we extend $P$ (to make sure that the extended $Q$ satisfies the conditions in Theorem~\ref{quadrocruchar}), we obtain the following result.

\begin{thm}\label{quadrocrucial-growth-thm} For $k+\ell-3\leq n < (k-1)(\ell-1)$, we have the following monotone property for the number of quadrocrucial permutations:
\begin{equation}\label{monot-qcru}s^{(q)}_n(k,\ell)< s^{(q)}_{n+1}(k,\ell).\end{equation} 
\end{thm}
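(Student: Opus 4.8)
The plan is to follow the proof of Theorem~\ref{bicrucial-growth-thm} almost verbatim, the one genuinely new ingredient being that the recording tableau $Q$ now carries real constraints (the last two bullets of Theorem~\ref{quadrocruchar}), so every surgery we perform on the insertion tableau $P$ must be performed identically on $Q$. I would first set up a canonical injection $\phi$ witnessing $s^{(q)}_n(k,\ell)\le s^{(q)}_{n+1}(k,\ell)$. Let $\sigma\in S_n$ be $(k,\ell)$-quadrocrucial with tableaux $(P,Q)$; by Theorem~\ref{quadrocruchar} their common shape $\lambda$ has exactly $k-1$ columns and $\ell-1$ rows, and since $n<(k-1)(\ell-1)$ it is a proper subdiagram of the $(\ell-1)\times(k-1)$ rectangle. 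Because row $1$ of $\lambda$ has length $k-1$ and column $1$ has length $\ell-1$, the first cell of that rectangle missing from $\lambda$, read left to right and top to bottom, is an addable corner $c=(i_0,j_0)$ with $2\le i_0\le\ell-1$ and $2\le j_0\le k-1$; adjoining it keeps the number of rows equal to $\ell-1$ and the number of columns equal to $k-1$. Let $\phi(\sigma)$ be the permutation with tableaux $(P',Q')$ obtained by writing $n+1$ into cell $c$ of \emph{both} $P$ and $Q$. No previously present entry moves, so the four anchored increasing sequences demanded by Theorem~\ref{quadrocruchar} (at $p_{1,k-1}$, $p_{\ell-1,1}$, $q_{1,k-1}$, $q_{\ell-1,1}$) survive; hence $\phi(\sigma)$ is $(k,\ell)$-quadrocrucial, and $\phi$ is injective since deleting the entry $n+1$ from both tableaux recovers $\sigma$.

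For strictness I would exhibit, for each admissible $n$, a $(k,\ell)$-quadrocrucial $\tau\in S_{n+1}$ lying outside $\operatorname{im}\phi$, splitting into the three cases of the proof of Theorem~\ref{bicrucial-growth-thm}. In the generic range $k+2\ell-5<n<(k-1)(\ell-1)-2$ I would construct a shape with at least two addable interior corners together with a pair $(P,Q)$ — conveniently $P=Q$ — satisfying all six conditions of Theorem~\ref{quadrocruchar}: start from a pair corresponding to a minimal quadrocrucial permutation (Theorem~\ref{min-kl-quadro}, where indeed $P=Q$) and insert the successive new maxima $k+2\ell-4,\dots,n$ into matching corners of $P$ and $Q$. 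Writing $n+1$ into the second addable interior corner of this $P$, and the matching cell of $Q$, produces a quadrocrucial $\tau$ in which $n+1$ does not sit at the first missing cell of its shape-minus-$\{n+1\}$; since $\phi$ always places $n+1$ at that first missing cell, $\tau\notin\operatorname{im}\phi$. For the two boundary values $n\in\{(k-1)(\ell-1)-2,(k-1)(\ell-1)-1\}$ I would recycle construction (ii) from the proof of Theorem~\ref{crucial-growth-thm}, filling the last row of $P$ (and of $Q$) with comparatively small entries so that the last-row increasing-sequence conditions fail until $n+1$ is adjoined in the new last-row corner; the resulting $\tau$ is quadrocrucial, but the permutation one would get by deleting $n+1$ is not (its tableaux violate Theorem~\ref{quadrocruchar}), so again $\tau\notin\operatorname{im}\phi$. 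For the base case $n=k+2\ell-5$ I would mimic the final paragraph of the proof of Theorem~\ref{bicrucial-growth-thm}: in the tableau of Theorem~\ref{min-kl-quadro} move the entry $n$ to the corner $p'_{2,3}$ and place $n+1$ in the old position of $n$ (with the obvious swap when $\ell=3$), doing the identical thing to $Q$. When $k=\ell$ all of this is carried out on whichever of the two transpose-related shapes of Theorem~\ref{min-kl-quadro} is present; nothing else changes.

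I expect the main obstacle to be exactly this coupling of $P$ and $Q$. In the bicrucial case $Q$ was unconstrained, so the freedom to choose $Q$ alone could be used to manufacture the extra permutations; here the witness $\tau$ must be produced together with an explicit $Q$ for which the two $Q$-conditions of Theorem~\ref{quadrocruchar} hold simultaneously with the two $P$-conditions. The safe way to guarantee this in every case is to keep $Q=P$ (or $Q$ equal to the canonical row-by-row filling of the relevant shape) throughout and to observe that the increasing sequences one exhibits for $P$ transcribe verbatim to $Q$. A secondary point that will need care is the boundary construction: one must check that the small entries placed in the last row really do break a $Q$-condition, not merely a $P$-condition, since it is precisely the absence of any valid preimage under $\phi$ that certifies strictness there.
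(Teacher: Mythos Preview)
Your proposal is correct and follows essentially the same approach as the paper. The paper's own proof is a single sentence instructing the reader to rerun the proof of Theorem~\ref{bicrucial-growth-thm} while ``always mimicking on $Q$ the ways we extend $P$''; your write-up simply unpacks that sentence, and your device of keeping $Q=P$ (or performing identical surgery on both tableaux) is exactly the intended mechanism for guaranteeing that the two $Q$-conditions of Theorem~\ref{quadrocruchar} are preserved at every step.
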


The following result is similar to the fact discussed in Remark~\ref{rem-square-numbers} for top-right-crucial permutations, and it is true since $P$ and $Q$ have the same restrictions and the number of choices for them for a fixed shape is a square number. 

\begin{thm}\label{quadro-square-sum}
 There exist $m_1, \ldots, m_t \in \mathbb{N}$ such that the number of $(k,\ell)$-quadrocrucial permutations of length $n$
$$s^{(q)}_n(k,\ell) = m_1^2 + \cdots + m_t^2,$$
where $t$ is the number of Young diagrams with $n$ squares, $k-1$ columns and $\ell-1$ rows (equivalently, $t$ is the number of partitions of $n$ into $\ell-1$ parts with the largest part of size $k-1$).
\end{thm}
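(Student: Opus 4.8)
The plan is to reinterpret Theorem~\ref{quadrocruchar} as a condition on a single standard Young tableau and then count quadrocrucial permutations shape by shape using the shape-preserving bijectivity of the RSK correspondence.

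First I would note that the four ``increasing sequence'' requirements in Theorem~\ref{quadrocruchar} come in two identical pairs: the conditions imposed on $P$ (existence of an increasing sequence $p_{1,k-1},p_{2,j_2},\dots,p_{\ell-1,j_{\ell-1}}$ and of an increasing sequence $p_{\ell-1,1},p_{i_2,2},\dots,p_{i_{k-1},k-1}$) are, after replacing every $p$ by $q$, exactly the conditions imposed on $Q$. Let me call an SYT \emph{admissible} if it has $k-1$ columns, $\ell-1$ rows, and satisfies these two chain conditions. Theorem~\ref{quadrocruchar} then says precisely that $\sigma$ is $(k,\ell)$-quadrocrucial if and only if both $P(\sigma)$ and $Q(\sigma)$ are admissible; in particular they share a common shape $\lambda$, which is a partition of $n$ with exactly $k-1$ columns and exactly $\ell-1$ rows.

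Next I would enumerate the $t$ Young diagrams $\lambda^{(1)},\dots,\lambda^{(t)}$ with $n$ squares, $k-1$ columns and $\ell-1$ rows, and for each $r$ set $m_r$ equal to the number of admissible SYT of shape $\lambda^{(r)}$. Because RSK is a bijection between $S_n$ and pairs of SYT of equal shape, the number of $(k,\ell)$-quadrocrucial permutations whose insertion and recording tableaux have shape $\lambda^{(r)}$ equals the number of pairs of admissible SYT of that shape, namely $m_r\cdot m_r=m_r^2$. Summing over $r$ — which is legitimate because every quadrocrucial permutation has its tableaux of some such shape, by the first two bullets of Theorem~\ref{quadrocruchar} — gives $s^{(q)}_n(k,\ell)=m_1^2+\dots+m_t^2$.

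The only point that will need a word of care is that the statement asks for exactly $t$ summands, one per Young diagram, whereas some $m_r$ may vanish (this already happens, for instance, when $n$ lies outside the range $k+2\ell-5\le n\le (k-1)(\ell-1)$, or for a shape admitting no admissible filling). This is not a real obstacle: one simply reads $\mathbb{N}$ as including $0$, or else discards the zero terms and reduces $t$ accordingly. I do not expect any genuine difficulty here — all the substance is already packaged in Theorem~\ref{quadrocruchar} together with the shape-preserving bijectivity of RSK — and indeed the very same template reproves the square-sum observation for top-right-crucial permutations mentioned in Remark~\ref{rem-square-numbers}.
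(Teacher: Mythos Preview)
Your proposal is correct and is precisely the argument the paper has in mind: the paper's justification is the single sentence that ``$P$ and $Q$ have the same restrictions and the number of choices for them for a fixed shape is a square number,'' which is exactly your shape-by-shape count of admissible pairs via Theorem~\ref{quadrocruchar} and the RSK bijection. Your remark about possibly-zero $m_r$ is a reasonable caveat that the paper does not address; reading $\mathbb{N}$ to include $0$ resolves it.
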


\begin{figure}
\begin{center}
\begin{tikzpicture}[scale=0.4]

\draw [line width=0.5](0,0)--(8,0);
\draw [line width=0.5](0,2)--(8,2);
\draw [line width=0.5](0,4)--(8,4);
\draw [line width=0.5](0,6)--(8,6);
\draw [line width=0.5](0,8)--(8,8);

\draw [line width=0.5](0,0)--(0,8);
\draw [line width=0.5](2,0)--(2,8);
\draw [line width=0.5](4,0)--(4,8);
\draw [line width=0.5](6,0)--(6,8);
\draw [line width=0.5](8,0)--(8,8);

\draw[fill=black] (0.5,3.5) circle (4pt);
\draw[fill=black] (1,3) circle (4pt);
\draw[fill=black] (1.5,2.5) circle (4pt);

\draw[fill=black] (6.5,5.5) circle (4pt);
\draw[fill=black] (7,5) circle (4pt);
\draw[fill=black] (7.5,4.5) circle (4pt);

\draw[fill=black] (2.5,6.5) circle (4pt);
\draw[fill=black] (3,7) circle (4pt);
\draw[fill=black] (3.5,7.5) circle (4pt);

\draw[fill=black] (4.5,0.5) circle (4pt);
\draw[fill=black] (5,1) circle (4pt);
\draw[fill=black] (5.5,1.5) circle (4pt);

\node [below] at (5,-0.1){{\tiny $k-2$}};
\node [below] at (3,9.1){{\tiny $k-2$}};
\node [below] at (9.1,5.5){{\tiny $\ell-2$}};
\node [below] at (-1,3.5){{\tiny $\ell-2$}};

\end{tikzpicture}
\end{center}
\vspace{-10mm}
\caption{The structure of the quadrocrucial permutation given by (\ref{counter-examp-quadrocrucial})}\label{structure-of-a-quadrocrucial-perm}
\end{figure}
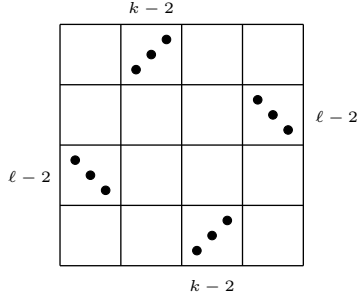

Experimenting with quadrocrucial permutations of smaller lengths, one can notice that any such permutation $\sigma$ has an element $x$ such that replacing this element with either $x(x+1)$ or $(x+1)x$ , and increasing all elements in $\sigma$ larger than $x$ by 1, results in a quadrocrucial permutation of larger length. For example, in the quadrocrucial permutation given by (\ref{min-quadrocrucial-perm}) we can replace the element $\ell$ by $(\ell+1)\ell$ and increase the elements larger than $\ell$ in the original permutation by 1 in order to get a quadrocrucial permutation of length $k+2\ell-4$.  It was tempting to conjecture and prove that this is the case for all quadrocrucial permutations of length less than the maximal length $(k-1)(\ell-1)$. However, this is not the case, as a counterexample is found by considering the quadrocrucial permutation (placed on two lines)
\begin{eqnarray}\label{counter-examp-quadrocrucial}
&& (k+\ell-4)(k+\ell-5)\cdots (k-1)  (k+2\ell-5)(k+2\ell-4)\cdots \\ \nonumber
&& (2k+2\ell-8)12\cdots (k-2) (k+2\ell-6)(k+2\ell-7)\cdots(k+\ell-3)
\end{eqnarray}
of length $2(k+\ell-4)$ whose schematic structure is presented in Figure~\ref{structure-of-a-quadrocrucial-perm}.  

\section{Directions of further research}\label{open-sec}

We end our paper by stating a conjecture and a number of open problems (in no particular order). 

\begin{con} Every $(k,\ell)$-crucial permutation can be obtained by deleting elements from a maximal $(k,\ell)$-crucial permutation. 
\end{con}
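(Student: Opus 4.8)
The plan is to reformulate the conjecture, through the RSK correspondence, as a statement about single-element extensions of permutations, and then to attack that statement.

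\textbf{Reduction.} By the Erd\H{o}s--Szekeres theorem \cite{ES1935} a permutation of length $(k-1)(\ell-1)$ avoiding $i_k$ and $d_{\ell}$ becomes non-avoiding after any right-extension, so it is $(k,\ell)$-crucial; conversely, every $(k,\ell)$-crucial permutation avoids $i_k$ and $d_{\ell}$, hence by \cite{ES1935} has length at most $(k-1)(\ell-1)$. Thus the maximal $(k,\ell)$-crucial permutations are exactly the permutations of length $(k-1)(\ell-1)$ avoiding $i_k$ and $d_{\ell}$. Since ``$\sigma$ is obtained from $\tau$ by deleting elements'' means precisely ``$\sigma$ occurs as a pattern in $\tau$'', the conjecture is equivalent to the assertion that every $(k,\ell)$-crucial permutation occurs as a pattern in some permutation of length $(k-1)(\ell-1)$ avoiding $i_k$ and $d_{\ell}$. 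I would prove the a priori stronger statement that \emph{every} permutation avoiding $i_k$ and $d_{\ell}$ occurs as a pattern in such a permutation. By downward induction on $n=|\sigma|$ (the base case $n=(k-1)(\ell-1)$ being trivial), this reduces to the following: if $\sigma$ avoids $i_k$ and $d_{\ell}$ and $n<(k-1)(\ell-1)$, then $\sigma$ is obtained by deleting a single element from some permutation of length $n+1$ that still avoids $i_k$ and $d_{\ell}$.

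\textbf{RSK translation.} Let $\lambda(\sigma)$ denote the common shape of the tableaux of $\sigma$, and let $B$ be the partition with $\ell-1$ parts each equal to $k-1$ (the $(\ell-1)\times(k-1)$ rectangle). By Remark~\ref{longest-remark}, $\sigma$ avoids $i_k$ and $d_{\ell}$ if and only if $\lambda(\sigma)\subseteq B$; since $n<|B|$ we get $\lambda(\sigma)\subsetneq B$. It is standard (for instance via jeu de taquin; see \cite{Sagan,Stanley}) that deleting one element from a permutation removes exactly one cell from its RSK shape, so a one-element extension $\sigma'$ of $\sigma$ satisfies $\lambda(\sigma')=\lambda(\sigma)+c$ for some cell $c$ that can be adjoined to $\lambda(\sigma)$. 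Hence the inductive step follows from the \emph{extension lemma}: for every permutation $\sigma$ and every addable cell $c$ of $\lambda(\sigma)$ there is a one-element extension $\sigma'$ of $\sigma$ with $\lambda(\sigma')=\lambda(\sigma)+c$. Indeed, given the extension lemma, since $\lambda:=\lambda(\sigma)\subsetneq B$ one lets $i$ be least with $\lambda_i<k-1$ and takes $c=(i,\lambda_i+1)$; a short check shows $c$ is addable and $\lambda+c\subseteq B$, and applying the extension lemma completes the step.

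\textbf{Towards the extension lemma.} When $c$ lies at the end of the first row, $\sigma':=\sigma(n+1)$ (append a new largest element on the right) works, since row-inserting $n+1$ lands at the end of the first row of $P(\sigma)$; when $c$ lies at the bottom of the first column, $\sigma':=(n+1)\sigma$ (prepend a new largest element) works, using that $\lambda(r(\sigma))$ is the transpose of $\lambda(\sigma)$ and that appending a new largest element to $r(\sigma)$ extends its first row. The genuinely interior addable cells $c=(i,j)$ with $i,j\ge 2$ are the heart of the matter. Here I would use that a right-extension of $\sigma$ by a value $t$ relabels $P(\sigma)$ by the order-embedding $e_t$ and then row-inserts $t$, so the new cell appears exactly where the bumping path (started in the first row) terminates; by the symmetries of Lemma~\ref{RSK-properties}, the analogous descriptions hold for left-extensions and for extensions from above and from below. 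One then chooses the value $t$, or the insertion position, so that the bumping path terminates so as to create precisely $c$; genuinely interior insertions (a value of intermediate rank at an intermediate position) are also available and may be needed, since only pattern containment is required.

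\textbf{The main obstacle.} The delicate point is that the \emph{right} kind of extension depends on the target cell in a non-uniform way. For example, take the $(4,3)$-crucial permutation $\sigma=14523$, whose shape is $(3,2)$; the only cell one can adjoin to $(3,2)$ while staying inside $B=(3,3)$ is $(2,3)$. One checks that \emph{no} right-extension of $\sigma$ and \emph{no} extension from above has shape $(3,3)$ --- they all produce $(4,2)$ or $(3,2,1)$, both of which leave $B$ --- whereas inserting a new smallest element at position $4$, giving $256134$, does have shape $(3,3)$ and contains $\sigma$. Thus a proof of the extension lemma must determine, for each addable cell, an extension realizing it; equivalently, it must show that the set of shapes of all one-element extensions of a permutation $\sigma$ is always exactly $\{\lambda(\sigma)+c:\ c\text{ addable}\}$. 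I expect that carrying out the bumping-path bookkeeping for the interior cells cleanly --- perhaps more naturally via growth diagrams than by a direct case analysis over the four families of extensions --- is where the substantive work lies.
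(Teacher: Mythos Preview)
The statement you are addressing is listed in the paper as a \emph{Conjecture} in Section~\ref{open-sec} (``Directions of further research''); the paper offers no proof, so there is nothing to compare your argument against. What you have written is therefore not a re-derivation of a known result but an outline of a possible attack on an open problem, and you are candid about this: you explicitly identify the ``extension lemma'' as ``where the substantive work lies'' and do not claim to have carried it out.

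Your reductions are sound. It is correct that the maximal $(k,\ell)$-crucial permutations are exactly the $i_k,d_{\ell}$-avoiders of length $(k-1)(\ell-1)$, that the conjecture is equivalent to pattern-containment in such a maximal avoider, and that the stronger statement ``every avoider embeds in a maximal avoider'' reduces by downward induction to a single-step extension. It is also correct that a one-element extension changes the RSK shape by adding exactly one addable cell, so the single-step extension is equivalent to realising, by some one-element insertion, at least one addable cell of $\lambda(\sigma)$ lying inside the rectangle $B$.

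The genuine gap is precisely the one you flag. Your ``extension lemma'' (every addable cell is realised by some one-element extension of $\sigma$) is strictly stronger than what the inductive step needs: you only require that \emph{some} addable cell inside $B$ be realised. Your own example $\sigma=14523$ already shows that right- and top-extensions alone do not suffice (they miss $(2,3)$ entirely), so a proof must coordinate the four boundary families of insertions, or use genuinely interior insertions, in a way that depends on both $P$ and $Q$. You gesture at growth diagrams as the natural tool, which is reasonable, but neither the full extension lemma nor the weaker version needed here is established in your write-up. As it stands this is a plausible programme, not a proof, and the conjecture remains open exactly as the paper presents it.
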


\begin{prob} \normalfont
Let us fix a Young diagram $\mathcal{D}$. How many crucial permutations of a given type introduced in this paper exist with the shapes of $P$ and $Q$ being $\mathcal{D}$?  The answer to this question is likely to be difficult to find for an arbitrary $\mathcal{D}$, so the problem can be reformulated for a fixed class of Young diagrams, e.g.\ of some regular type, or having other specified properties. 
\end{prob}

For the next problem, recall the definition of an irreducible bicrucial permutation in Section~\ref{5-types-of-crucial-perms-sec}.

\begin{prob} \normalfont
Can we argue that for $k>\ell$ the only irreducible $(k,\ell)$-bicrucial permutations are of length $k+2\ell-5$ and $2k+\ell-5$? (See Remark~\ref{irr-vs-min} and the proof of Theorem~\ref{min-bicrucial-char-thm}). Or do such permutations exist of other lengths?
\end{prob}

\begin{prob}\label{enum-min-tricrucial} \normalfont Generalize Corollary~\ref{min-k3-tricrucial-count-cor} by enumerating minimal $(k,\ell)$-tricrucial permutations. This problem is equivalent to enumerating SYT  $Q$ of the form in Theorem~\ref{min-tricrucial-char-thm} with $q_{1,k-1}<q_{2,2}$. \end{prob}

\begin{prob} \normalfont Enumerate $(k,\ell)$-(right-top,bi,tri,quadro)crucial permutations. \end{prob}

\begin{prob} \normalfont Explain bijectively the connection between $(k,3)$-crucial permutations of next minimal length mentioned in Remark~\ref{next-min-crucial-count} and the combinatorial objects mentioned in the sequence $A077414$ in \cite{oeis}.\end{prob}

For the next problem, recall the definition of a non-extandable permutation in Section~\ref{5-types-of-crucial-perms-sec}.

\begin{prob} \normalfont Note that any non-extendable permutation of any of the five types introduced in this paper is necessarily quadrocrucial. Do non-extendable $(k,\ell)$-quadrocrucial permutations, apart from the maximal $(k,\ell)$-quadrocrucial permutations, exist? If so, characterize such permutations. \end{prob}

\end{document}